\newcommand{\supp}{\mathrm{supp\,}} 
\newcommand{\GL}{\operatorname{GL}}
\newcommand{\C}{\mathbb C}
\newcommand{\R}{\mathbb R}
\newcommand{\Q}{\mathbb Q}
\newcommand{\Z}{\mathbb{Z}}
\newcommand{\N}{\mathbb{N}}
\newtheorem{theorem}{Theorem}
\newtheorem{corollary}[theorem]{Corollary}
\newtheorem{lemma}[theorem]{Lemma}
\newtheorem{proposition}[theorem]{Proposition}
\newtheorem{remark}[theorem]{Remark}
\newtheorem*{definition}{Definition}        
\newtheorem*{example}{Example}              
\newtheorem*{openProb}{Open Problem}        
\begin{document}

\begin{frontmatter}



\title{On construction of bounded sets not admitting a general type of Riesz spectrum}


\author{Dae Gwan Lee}

\address{Mathematisch-Geographische Fakult\"at, Katholische Universit\"at Eichst\"att-Ingolstadt, \\
85071 Eichst{\"a}tt, Germany\\
Email: daegwans@gmail.com}



\begin{abstract}
Despite the recent advances in the theory of exponential Riesz bases, it is yet unknown whether there exists a set $S \subset \mathbb{R}^d$ which does not admit a Riesz spectrum, meaning that for every $\Lambda \subset \mathbb{R}^d$ the set of exponentials $e^{2\pi i \lambda \cdot x}$ with $\lambda\in\Lambda$ is not a Riesz basis for $L^2(S)$.
As a meaningful step towards finding such a set, we construct a set $S \subset [-\frac{1}{2}, \frac{1}{2}]$ which does not admit a Riesz spectrum containing a nonempty periodic set with period belonging in $\alpha \mathbb{Q}_+$ for any fixed constant $\alpha > 0$, where $\mathbb{Q}_+$ denotes the set of all positive rational numbers. In fact, we prove a slightly more general statement that the set $S$ does not admit a Riesz spectrum containing arbitrarily long arithmetic progressions with a fixed common difference belonging in $\alpha \mathbb{N}$.
Moreover, we show that given any countable family of separated sets $\Lambda_1, \Lambda_2, \ldots \subset \mathbb{R}$ with positive upper Beurling density, one can construct a set $S \subset [-\frac{1}{2}, \frac{1}{2}]$ which does not admit the sets $\Lambda_1, \Lambda_2, \ldots$ as Riesz spectrum. An interesting consequence of our results is the following statement.
There is a set $V \subset [-\frac{1}{2}, \frac{1}{2}]$ with arbitrarily small Lebesgue measure such that for any $N \in \mathbb{N}$ and any proper subset $I$ of $\{ 0, \ldots, N-1 \}$, the set of exponentials $e^{2\pi i k x}$ with $k \in \cup_{n \in I} (N\mathbb{Z} {+} n)$ is not a frame for $L^2(V)$.
The results are based on the proof technique of Olevskii and Ulanovskii in 2008.
\end{abstract}



\begin{keyword}
exponential bases \sep
Riesz bases \sep
Riesz sequences \sep
frames


\MSC 42C15
\end{keyword}

\end{frontmatter}



\section{Introduction and Main Results}
\label{sec:intro}

One of the fundamental research topics in Fourier analysis is the theory of exponential bases and frames.
The elementary fact that $\{ e^{2 \pi i n \cdot x} \}_{n \in \Z^d}$ forms an orthogonal basis for $L^2[-\frac{1}{2}, \frac{1}{2}]^d$,
has far-reaching implications in many areas of mathematics and engineering.
For instance, the celebrated Whittaker-Shannon-Kotel'nikov sampling theorem in sampling theory is an important consequence of this fact (see e.g., \cite{Hi96}).

As a natural generalization of the functions $\{ e^{2 \pi i n \cdot x} \}_{n \in \Z^d}$ in $L^2[-\frac{1}{2}, \frac{1}{2}]^d$,
one considers the set of exponentials $E(\Lambda) := \{e^{2\pi i \lambda \cdot x} : \lambda\in\Lambda\}$,
where $\Lambda \subset \R^d$ is a discrete set consisting of the pure frequency components of exponentials (thus called the \emph{frequency set} or \emph{spectrum}), in the Hilbert space $L^2(S)$ for a finite positive measure set $S \subset \R^d$.
That is, for each $\lambda\in\Lambda$ the map $x \mapsto e^{2\pi i \lambda \cdot x}$ restricted to the set $S$ is considered as a function in $L^2(S)$.
Characterizing the properties of $E(\Lambda)$ in the space $L^2(S)$,
such as whether $E(\Lambda)$ forms an orthogonal/Riesz basis or a frame, has been an important problem in nonharmonic Fourier analysis.
The problem has a close connection to the theory of entire functions of exponential type in complex analysis,
through the celebrated work of Paley and Wiener \cite{PW34}.
For more details on this connection and for some historical background, we refer to the excellent book by Young \cite{Yo01}.
Below we give a short overview of some known results on exponential bases and frames.

\subsection{An overview of existing work on exponential bases and frames}
\label{subsec:known-results-overview}

\textbf{Exponential orthogonal bases}. \
For the case of orthogonal bases, Fuglede \cite{Fu74} posed a famous conjecture (also called the \emph{spectral set conjecture}) which states that if $S \subset \R^d$ is a finite positive measure set, then
there is an exponential orthogonal basis $E(\Lambda)$ (with $\Lambda \subset \R^d$) for $L^2(S)$
if and only if the set $S$ tiles $\R^d$ by translations along a discrete set $\Gamma \subset \R^d$ in the sense that
\begin{equation}\label{eqn:FugledeConj-S-tiles-Rd}
\sum_{\gamma \in \Gamma} \chi_S (x + \gamma) = 1
\quad \text{for a.e.} \;\; x \in \R^d ,
\end{equation}
where $\chi_S (x) = 1$ for $x \in S$, and $0$ otherwise.
The conjecture turned out to be false for $d \geq 3$ but is still open for $d = 1,2$.
There are many special cases where the conjecture is known to be true.
For instance, the conjecture is true when $\Gamma$ is a lattice of $\R^d$, in which case the set $\Lambda \subset \R^d$ can be chosen to be the dual lattice of $\Gamma$ \cite{Fu74}, and also when $S \subset \R^d$ is a convex set of finite positive measure for all $d \in \N$ \cite{LM19}.
In particular, it was shown in \cite{IKP99} that there is no exponential orthogonal basis for $L^2(S)$ when $S$ is the unit ball of $\R^d$ for $d \geq 2$, in contrast to the case $d = 1$ where the unit ball is simply $S = [-1,1]$ and $E(\frac{1}{2}\Z)$ is an orthogonal basis for $L^2 [-1,1]$.
For more details of the recent progress on Fuglede's conjecture, we refer to \cite{LM19} and the reference therein.

\medskip

\textbf{Exponential Riesz bases}. \
The relaxed case of Riesz bases is yet more challenging.
Certainly, relaxing the condition of orthogonal bases to Riesz bases allows for potentially much more feasible sets $S \subset \R^d$.
However, there are only several classes of sets $S \subset \R^d$ that are known to \emph{admit a Riesz spectrum}, meaning that there \emph{exists an exponential Riesz basis for $L^2(S)$}.
For instance, the class of convex symmetric polygons in $\R^2$ \cite{LR00}, the class of sets that are finite unions of intervals in $\R^d$ \cite{KN15,KN16}, and
the class of certain symmetric convex polytopes in $\R^d$ for all $d \geq 1$ \cite{DL19}.
On the other hand, to the best of our knowledge, nobody was able to find a set $S \subset \R^d$ which does \emph{not} admit a Riesz spectrum.

In search for an analogue of Fuglede's conjecture for Riesz bases, Grepstad and Lev \cite{GL14} considered the sets $S \subset \R^d$ that satisfy for some discrete set $\Gamma \subset \R^d$ and some $k \in \N$,
\[
\sum_{\gamma \in \Gamma} \chi_S (x + \gamma) = k
\quad \text{for a.e.} \;\; x \in \R^d .
\]
Such a set $S \subset \R^d$ is called a \emph{$k$-tile} with respect to $\Gamma$; in particular, the set $S$ satisfying \eqref{eqn:FugledeConj-S-tiles-Rd} is a $1$-tile with respect to $\Gamma$.
It was shown in \cite{GL14} that
if $S \subset \R^d$ is a bounded $k$-tile set with respect to a lattice $\Gamma \subset \R^d$ and has measure zero boundary, then the set $S$ admits a Riesz spectrum $\Lambda \subset \R^d$.
Moreover in this case, the set $\Lambda$ can be chosen to be a union of $k$ translations of $\Gamma^*$ (referred to as a $(k,\Gamma^*)$-\emph{structured spectrum}), where $\Gamma^* := (A^{-1})^T \, \mathbb{Z}^d$ is the dual lattice of $\Gamma = A \, \mathbb{Z}^d$ with $A \in \GL(d,\mathbb{R})$.
Later, Kolountzakis \cite{Ko15} gave a simpler proof of this result using elementary arguments and also eliminated the requirement of $S$ having measure zero boundary.
The converse of the statement was proved by Agora et al.~\cite{AAC15}, thus establishing the equivalence: Given a lattice $\Gamma \subset \R^d$, a bounded set $S \subset \R^d$ is a $k$-tile with respect to $\Gamma$ if and only if
it admits a $(k,\Gamma^*)$-structured Riesz spectrum.
They also showed that the \emph{boundedness} of $S$ is essential by constructing an \emph{unbounded} $2$-tile set $S \subset \R$ with respect to $\Z$ which does not admit a $(2,\Z)$-structured Riesz spectrum.
Nevertheless, for \emph{unbounded} multi-tiles $S \subset \R^d$ with respect to a lattice $\Gamma$, Cabrelli and Carbajal \cite{CC18}
were able to provide a sufficient condition for $S$ to admit a structured Riesz spectrum.
Recently, Cabrelli et al.~\cite{CHM21} found a necessary and sufficient condition for a multi-tile $S \subset \R^d$ of finite positive measure to admit a structured Riesz spectrum, which is given in terms of the Bohr compactification of the tiling lattice $\Gamma$.

\medskip

\textbf{Exponential frames}. \
Since frames allow for redundancy, it is relatively easier to obtain exponential frames than exponential Riesz bases.
For instance, the set of exponentials $\{ e^{2 \pi i n \cdot x} \}_{n \in \Z^d}$ is an orthonormal basis for $L^2[-\frac{1}{2}, \frac{1}{2}]^d$ and thus a frame for $L^2(S)$ with frame bounds $A = B =1$ whenever $S$ is a measurable subset of $[-\frac{1}{2}, \frac{1}{2}]^d$.

Nitzan et al.~\cite{NOU16} proved that
if $S \subset \R^d$ is a finite positive measure set, then there exists an exponential frame $E(\Lambda)$ (with $\Lambda \subset \R^d$) for $L^2(S)$ with frame bounds $c \, |S|$ and $C \, |S|$, where $0 < c < C < \infty$ are absolute constants.
The proof is based on a lemma from  Marcus et al.~\cite{MSS16} which resolved the famous Kadison-Singer problem in the affirmative.

\medskip

\textbf{Universality}. \
In \cite{OU06,OU08}, Olevskii and Ulanovskii considered the interesting question of universality.
They discovered some frequency sets $\Lambda \subset \R^d$ that have universal properties,
namely the so-called \emph{universal} uniqueness/sampling/
interpolation sets $\Lambda \subset \R^d$ for Paley-Wiener spaces $PW(S)$ with \emph{all} sets $S \subset \R^d$ in a certain class.
In our notation, this corresponds to the set of exponentials $E(\Lambda)$ being a complete sequence/frame/Riesz sequence in $L^2(S)$ for \emph{all} sets $S \subset \R^d$ in a certain class.
For the convenience of readers, we include a short exposition on the relevant notions in Paley-Wiener spaces in \ref{sec:PW-language}.

It was shown that universal complete sets of exponentials exist, for instance, the set $E(\Lambda)$ with
$\Lambda = \{ \ldots, -6, -4, -2 , 1, 3, 5, \ldots \}$
is complete in $L^2(S)$ for every measurable set $S \subset [-\frac{1}{2}, \frac{1}{2}]$ with $|S| \leq \frac{1}{2}$.
Furthermore,
any set $E(\Lambda)$ with $\Lambda = \{ \lambda_n \}_{n \in \Z}$ satisfying
$0 < | \lambda_n - n | \leq 1/2^{|n|}$
for all $n \in \Z$, is complete in $L^2(S)$ whenever $S \subset \R$ is a bounded measurable set with $|S| < 1$.

On the other hand, the existence of universal exponential frames and universal exponential Riesz sequences depend on the topological properties of $S$.
As a positive result,
it was shown that there is a perturbation $\Lambda$ of $\Z$ such that $E(\Lambda)$ is a frame for $L^2(S)$ whenever $S \subset \R$ is a \emph{compact} set with $|S| < 1$; a different construction of such a set $\Lambda \subset \R$ was given by Matei and Meyer \cite{MM08,MM10} based on the theory of quasicrystals.
Similarly, there is a perturbation $\Lambda$ of $\Z$ such that $E(\Lambda)$ is a Riesz sequence in $L^2(S)$ whenever $S \subset \R$ is an \emph{open} set with $|S| > 1$.
However, in the negative side, it was shown that given any $0 < \epsilon < 2$ and a separated set $\Lambda \subset \R$ with $D^-(\Lambda) < 2$, there is a measurable set $S \subset [0,2]$ with $|S| < \epsilon$ such that $E(\Lambda)$ is not a frame for $L^2(S)$, indicating that the \emph{compactness} of $S$ in the aforementioned result cannot be dropped.
Similarly, it was shown that given any $0 < \epsilon < 2$ and a separated set $\Lambda \subset \R$ with $D^+(\Lambda) > 0$, there is a measurable set $S \subset [0,2]$ with $|S| > 2 - \epsilon$ such that $E(\Lambda)$ is not a Riesz sequence in $L^2(S)$, indicating similarly that the restriction to \emph{open} sets cannot be dropped.

For more details on the universality results, we refer to Lectures 6 and 7 in the excellent lecture book by Olevskii and Ulanovskii \cite{OU16-book}.

\subsection{Contribution of the paper}
\label{subsec:contribution}

The current paper is motivated by the following open problem which was mentioned above.

\begin{openProb}
Is there a bounded/unbounded set $S \subset \R^d$ which does not admit a Riesz spectrum,
meaning that for every $\Lambda \subset \R^d$ the set of exponentials $\{e^{2\pi i \lambda \cdot x} : \lambda\in\Lambda\}$ is not a Riesz basis for $L^2(S)$?
\end{openProb}

We believe that the answer is \emph{positive}, and in this paper we make
a meaningful step towards finding such a set $S$.
Adapting the proof technique of Olevskii and Ulanovskii \cite{OU08}, we will construct a bounded subset of $\R$ which does not admit a certain general type of Riesz spectrum.
As the proof technique of \cite{OU08} works also in higher dimensions (see the end of Section 1 in \cite{OU08}), our results can be extended to higher dimensions to obtain a bounded subset of $\R^d$ with similar properties.
For simplicity of presentation, we will only consider the dimension one case ($d=1$).

Let us point out that the stated problem has been
deemed difficult by many researchers in the field, see for instance, \cite[Section 1]{CHM21}.
To resolve the problem in full may require a far more advanced proof technique than the one used in this paper.

Before presenting our results, note that for any bounded set $S \subset \R$ there are some parameters $\sigma > 0$ and $a \in \R$ such that $\frac{1}{\sigma} S + a \subset [-\frac{1}{2}, \frac{1}{2}]$.
It is therefore enough to restrict our attention to sets $S \subset [-\frac{1}{2}, \frac{1}{2}]$ (see Lemma \ref{lem:RB-basic-operations} below).
Also, recall that a set $S \subset \R$ is said to \emph{admit a Riesz spectrum} $\Lambda \subset \R$ if the set $E(\Lambda)$ is a Riesz basis for $L^2(S)$.

Our first main result is as follows.

\begin{theorem}\label{thm:main-result-S-arbit-long-AP-fixed-common-diff}
Let $0 < \alpha \leq 1$ and $0 < \epsilon < 1$.
There exists
a measurable set $S \subset [-\frac{1}{2}, \frac{1}{2}]$ with $| S | > 1 - \epsilon$ satisfying the following property:
if $\Lambda \subset \R$ contains arbitrarily long arithmetic progressions with a fixed common difference belonging in $\alpha \N$, then $E(\Lambda)$ is not a Riesz sequence in $L^2(S)$.
Moreover, such a set can be constructed explicitly as
\begin{equation}\label{eqn:first-thm-def-set-S}
S = [-\tfrac{1}{2}, \tfrac{1}{2}] \backslash V
\quad \text{with} \quad
V = [-\tfrac{1}{2} , \tfrac{1}{2}]
\cap \Big( \cup_{\ell=1}^\infty \cup_{m \in \Z} \big(\tfrac{m}{\ell\alpha} + \big[ {-} \tfrac{\epsilon}{\ell \cdot 2^{\ell+3}} , \tfrac{\epsilon}{\ell \cdot 2^{\ell+3}} \big] \big) \Big) .
\end{equation}
\end{theorem}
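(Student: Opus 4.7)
The plan is to first verify that $|S| > 1 - \epsilon$ by a direct computation of the measure of $V$, and then to derive a contradiction from the Riesz sequence property by testing it against the Dirichlet-type polynomial supported on a long arithmetic progression in $\Lambda$. For the measure estimate, at each level $\ell \geq 1$ the centres $m/(\ell\alpha)$ meeting $[-\tfrac{1}{2},\tfrac{1}{2}]$ number at most $\ell\alpha + 3$, each contributing an interval of length $\epsilon/(\ell \cdot 2^{\ell+2})$. Summing over $\ell$ and using $\alpha \leq 1$ together with $\sum_{\ell \geq 1} 1/2^\ell = 1$ and $\sum_{\ell \geq 1} 1/(\ell 2^\ell) = \log 2$ yields $|V| < \epsilon$, whence $|S| > 1-\epsilon$.

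For the obstruction, suppose for contradiction that $E(\Lambda)$ is a Riesz sequence in $L^2(S)$ with lower Riesz bound $A > 0$. By hypothesis, there is a fixed $k \in \N$ such that for every $N$ one can find $\lambda_0 = \lambda_0(N) \in \R$ with $\{\lambda_0 + nk\alpha\}_{n=0}^{N-1} \subset \Lambda$. Choosing unit coefficients on this AP (and zero elsewhere), the Riesz lower bound forces
\begin{equation*}
\int_S |f_N(x)|^2\,dx \;\geq\; AN, \qquad f_N(x) \;:=\; \sum_{n=0}^{N-1} e^{2\pi i (\lambda_0 + nk\alpha) x},
\end{equation*}
and summing the geometric series gives $|f_N(x)|^2 = \bigl(\sin(\pi N k\alpha x)/\sin(\pi k\alpha x)\bigr)^2$. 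The crucial observation is that $V$ has been engineered to isolate the zeros of the denominator: at level $\ell = k$, $V$ contains the $\delta_k$-neighborhood of every point $m/(k\alpha)$, where $\delta_k := \epsilon/(k\cdot 2^{k+3})$. Consequently, for every $x \in S$ one has $\mathrm{dist}(k\alpha x, \Z) \geq k\alpha\delta_k =: \eta$, and the elementary inequality $|\sin(\pi t)| \geq 2\,\mathrm{dist}(t,\Z)$ together with $|\sin(\pi N k\alpha x)| \leq 1$ yields the $N$-independent pointwise bound $|f_N(x)| \leq 1/(2\eta)$ on $S$. Therefore
\begin{equation*}
\int_S |f_N(x)|^2 \, dx \;\leq\; \frac{|S|}{4\eta^2} \;\leq\; \frac{1}{4\eta^2} \;=\; \frac{2^{2k+4}}{\epsilon^2 \alpha^2},
\end{equation*}
which is a finite constant depending only on $k,\alpha,\epsilon$. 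For $N$ large enough this contradicts $AN$, completing the proof.

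The argument has no substantive obstacle: the pointwise bound $|f_N| \leq 1/(2\eta)$ on $S$ is essentially built into the definition of $V$, since $V$ covers neighborhoods of exactly the singular points of $1/\sin(\pi k\alpha \cdot)$. The only delicate design choice is the shrinkage rate $\delta_\ell = \epsilon/(\ell \cdot 2^{\ell+3})$, which must decay fast enough in $\ell$ for the infinite union $V$ to have measure below $\epsilon$ (requiring geometric decay), yet at each individual level $\ell = k$ the quantity $\eta = k\alpha\delta_k$ must stay strictly positive so that $1/(2\eta)$ is finite. Both requirements are simultaneously satisfied by the explicit prescription in \eqref{eqn:first-thm-def-set-S}, which is exactly what allows a single set $S$ to rule out every common difference $k\alpha \in \alpha\N$ at once.
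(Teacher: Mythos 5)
Your proof is correct, and it takes a genuinely different route from the paper's. The paper proves the theorem by first establishing Proposition \ref{prop:seq-Z-2Z-3Z}: it takes the Fourier coefficients $\widetilde{a}^{(\eta\alpha/2^\ell)}$ of a normalized indicator function of a tiny interval, truncates the series at some $\widetilde{M}$, transplants the surviving coefficients onto an arithmetic progression of length $2\widetilde{M}+1$ in $\Lambda$, and shows the resulting exponential polynomial is $L^2$-close to a function supported on $V_\eta^{(P)}$; the lower Riesz bound is then violated by sending the auxiliary parameter $\eta=\epsilon/2^k$ to zero over a nested family of sets $V_{\epsilon/2^k}$. You instead put \emph{unit} coefficients on an $N$-term progression, sum the geometric series to get the Dirichlet-type kernel $|f_N(x)|=|\sin(\pi Nk\alpha x)/\sin(\pi k\alpha x)|$, and observe that removing the $\delta_k$-neighborhoods of the points $m/(k\alpha)$ keeps the denominator bounded below on $S$, so $\int_S|f_N|^2$ stays bounded while $\|c\|_{\ell^2}^2=N\to\infty$; the contradiction comes from a single level $\ell=k$ and the limit $N\to\infty$, with no truncation, no approximation step, and no second union over $k$. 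Your argument is shorter and more elementary for this theorem (and your measure computation $|V|\leq\epsilon(\tfrac{\alpha}{4}+\tfrac{3\ln 2}{4})<\epsilon$ is correct). What the paper's heavier $\ell^1$-coefficient machinery buys is robustness: in Theorem \ref{thm:Lambda1-Lambda2-Lambda3-introduction} the progressions extracted via Lemma \ref{lem:Lemma5-1-in-OU08} are only \emph{approximate} ($|s(j)-cj-d|\leq\delta$), and there the closed-form geometric-series summation underlying your pointwise bound is no longer available, whereas the $\ell^1$ estimate $\sum|a_j|\,|e^{i\pi\delta}-1|$ absorbs such perturbations; for the exact-progression setting of Theorem \ref{thm:main-result-S-arbit-long-AP-fixed-common-diff}, your Dirichlet-kernel argument is a clean and valid shortcut.
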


It should be noted that
the set $V \subset [-\frac{1}{2}, \frac{1}{2}]$ is a countable union of closed intervals, i.e., an $F_\sigma$ Borel set,
which contains $\frac{1}{\alpha} \Q \cap [-\frac{1}{2}, \frac{1}{2}]$.
Yet, the set has small Lebesgue measure $|V| < \epsilon$ due to the exponentially decreasing length of the intervals.
It is worth comparing the set $V$ with a fat Cantor set which is a closed, nowhere dense\footnote{A set is called \emph{nowhere dense} if its closure has empty interior.} subset of $[-\frac{1}{2}, \frac{1}{2}]$ with positive measure containing uncountably many elements (see e.g., \cite{Fo99,GO03}).
In contrast to the fat Cantor sets, the set $V$ is not closed and has nonempty interior. Also, the set $V$ is dense in $[-\frac{1}{2}, \frac{1}{2}]$ because it contains $\frac{1}{\alpha} \Q \cap [-\frac{1}{2}, \frac{1}{2}]$.

To illustrate the \emph{dense} set $V \subset [-\frac{1}{2}, \frac{1}{2}]$, we truncate the infinite union $\cup_{\ell=1}^\infty$ in its expression to the finite union over $\ell = 1, \ldots, 10$.
The corresponding sets for $\alpha = 1$ and $\epsilon = \frac{1}{10}, \frac{1}{2}, \frac{9}{10}$ are shown in Figure \ref{fig:SetV}.
  \begin{figure}[h!]
    \begin{center}
	\includegraphics[width=0.325\textwidth]{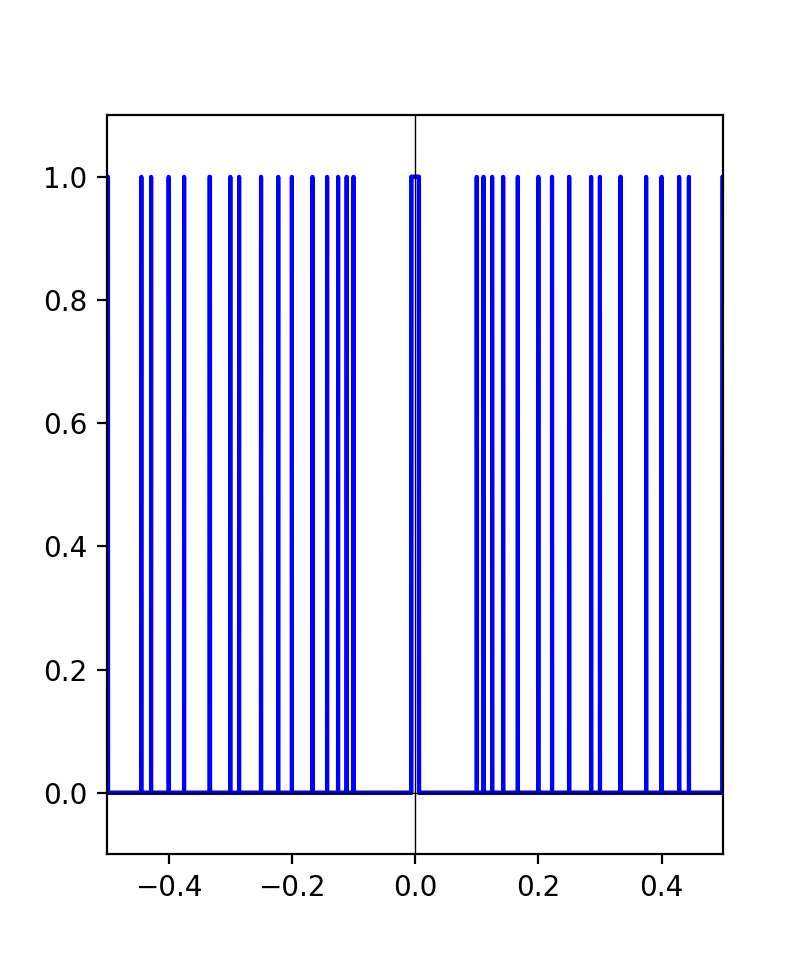}
	\includegraphics[width=0.325\textwidth]{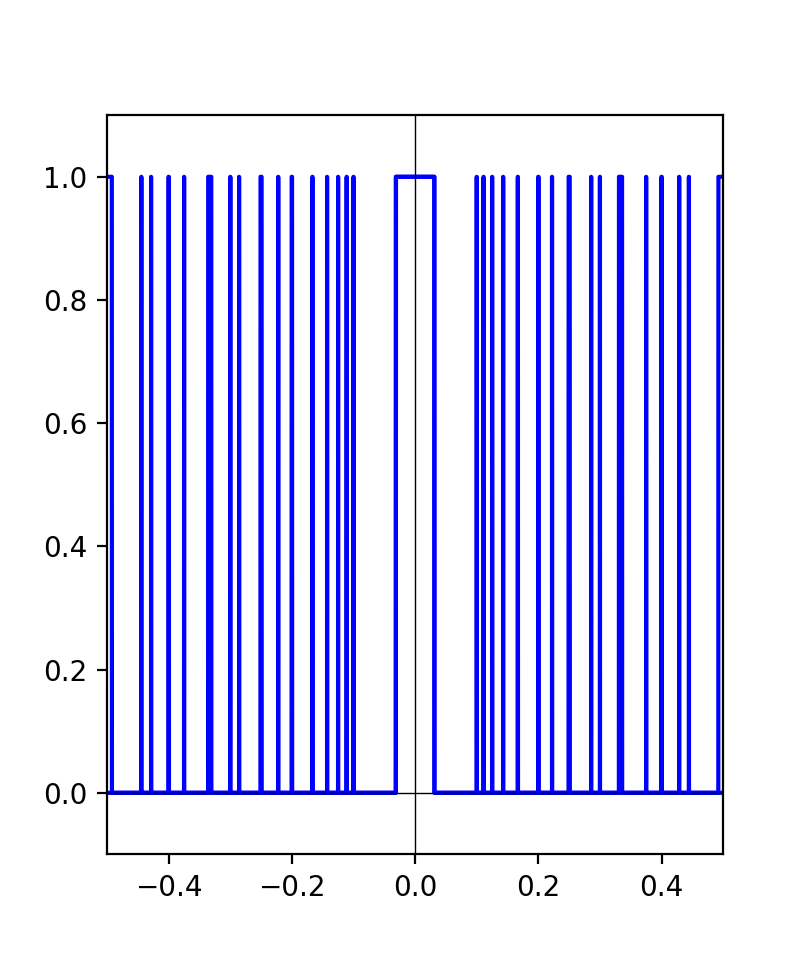}
	\includegraphics[width=0.325\textwidth]{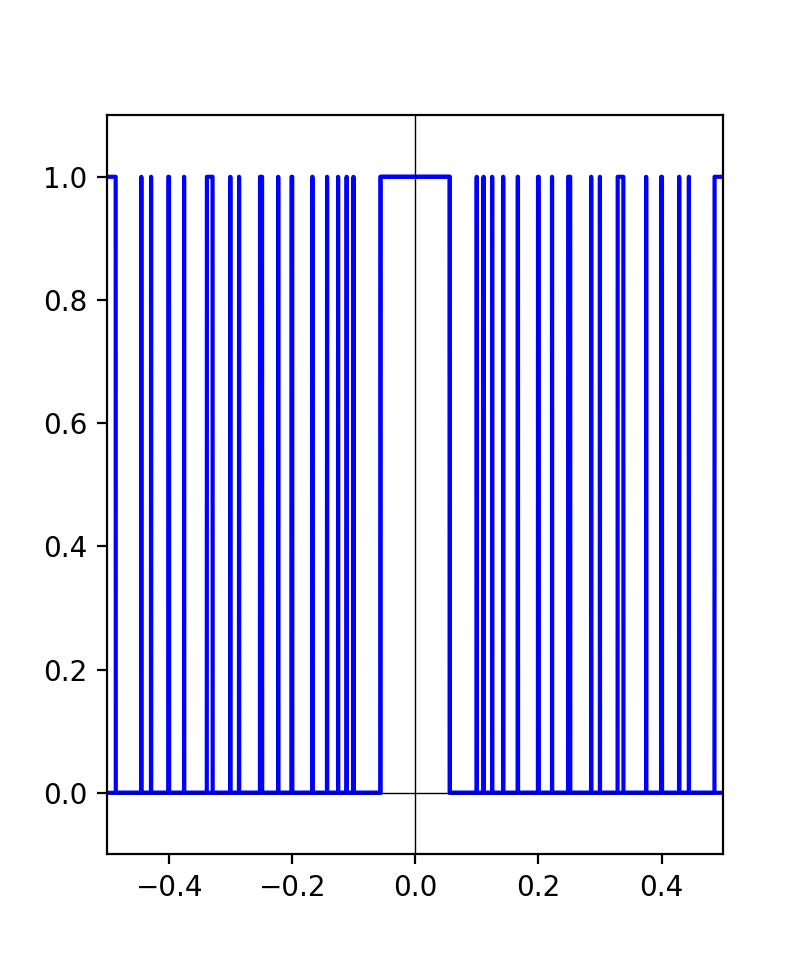}
    \end{center}
    \caption{The characteristic function of the corresponding truncated set for $\alpha = 1$ and $\epsilon = \frac{1}{10}, \frac{1}{2}, \frac{9}{10}$ (from left to right).}
    \label{fig:SetV}
  \end{figure}

To help the understanding of readers, we provide two sets $\Lambda \subset \R$, one which meets and the other which does not meet the condition stated in Theorem \ref{thm:main-result-S-arbit-long-AP-fixed-common-diff}.

\begin{example}
\rm
(a) Let $M_1 < M_2 < \cdots$ be an increasing sequence in $\N$, and let $P \in \N$.
Define the sequence $d_1 < d_2 < \cdots$ by
$d_1 = 0$ and $d_k = 2 \, \sum_{n=1}^{k-1} M_n P$ for $k \geq 2$.
Clearly, we have $d_{k+1} - d_k = 2 M_k P$ for all $k \in \N$.
Consider the set
\[
\Lambda = \pm \bigcup_{k = 1}^\infty
\big\{ d_k {+} P, \; d_k {+} 2P, \; \ldots, \; d_k {+} M_k P \big\}
\quad \subset \Z ,
\]
where $\pm \Lambda_0 := \Lambda_0 \cup (-\Lambda_0)$ for any set $\Lambda_0 \subset \R$.
This set contains arbitrarily long arithmetic progressions with common difference $P$, and has lower and upper Beurling density given by $D^-(\Lambda) = \frac{1}{2P}$ and $D^+(\Lambda) = \frac{1}{P}$, respectively
(see Section~\ref{subsec:prelim-density} for the definition of Beurling density). \\
(b) Let $N \in \N$ and let $\{ \sigma_k \}_{k=1}^\infty \subset (0,1)$ be a sequence of distinct irrational numbers between $0$ and $1$.
Consider the set
\[
\Lambda = \pm \bigcup_{k = 1}^\infty   \;
\Big( \sigma_k {+} Nk + \big\{ 0 \cdot 100^k, \; 1 \cdot 100^k, \; \ldots, \; (k{-}1) \cdot 100^k \big\} \Big)
\quad \subset \R
\]
which has uniform Beurling density $D(\Lambda) = \frac{1}{N}$.
For each $k \in \N$, the set $\Lambda$ contains exactly one arithmetic progression with common difference $100^k$ in the positive domain $(0,\infty)$, namely the arithmetic progression $\sigma_k {+} Nk$, $\sigma_k {+} Nk {+} 100^k$, $\ldots,$ $\sigma_k {+} Nk {+} (k{-}1) {\cdot} 100^k$ of length $k$.
Due to the $\pm$ mirror symmetry, the set $\Lambda$ has another such an arithmetic progression in the negative domain $(-\infty,0)$.
Note that all of these arithmetic progressions have integer-valued common difference and are distanced by some distinct irrational numbers, so none of them can be connected with another to form a longer arithmetic progression.
Hence, there is no number $P \in \N$ for which the set $\Lambda$ contains arbitrarily long arithmetic progressions with common difference $P$.
Such a set $\Lambda \subset \R$ is not covered by the class of frequency sets considered in Theorem \ref{thm:main-result-S-arbit-long-AP-fixed-common-diff}.
\end{example}

Our second main result is the following.

\begin{theorem}
\label{thm:Lambda1-Lambda2-Lambda3-introduction}
Let $0 < \epsilon < 1$ and let $\Lambda_1, \Lambda_2, \ldots \subset \R$ be a family of separated sets with $D^+ (\Lambda_{\ell}) > 0$ for all $\ell \in \N$.
One can construct a measurable set $S = S( \epsilon , \{ \Lambda_\ell \}_{\ell=1}^\infty ) \subset [-\frac{1}{2}, \frac{1}{2}]$ with $| S | > 1 - \epsilon$ such that $E(\Lambda_{\ell})$ is not a Riesz sequence in $L^2(S)$ for all $\ell \in \N$.
\end{theorem}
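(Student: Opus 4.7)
The plan is to reduce Theorem \ref{thm:Lambda1-Lambda2-Lambda3-introduction} to the single-frequency-set negative result of Olevskii and Ulanovskii \cite{OU08} recalled in Section \ref{subsec:known-results-overview}, by producing one small exceptional set $V_\ell$ for each $\Lambda_\ell$, with $|V_\ell|$ decaying geometrically in $\ell$, and taking their union.

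\textbf{Step 1 (single-$\ell$ reduction).} For a separated $\Lambda \subset \R$ with $D^+(\Lambda) > 0$ and any $\delta > 0$, Olevskii and Ulanovskii produce a set $S_0 \subset [0,2]$ with $|S_0| > 2 - \delta$ such that $E(\Lambda)$ is not a Riesz sequence in $L^2(S_0)$. Using the affine invariance encapsulated in Lemma \ref{lem:RB-basic-operations} (Riesz sequences are preserved under dilation of the spectrum combined with the reciprocal dilation of the domain, and under modulation), I would transport this result to the interval $[-\tfrac{1}{2}, \tfrac{1}{2}]$: there exists a measurable set $W \subset [-\tfrac{1}{2}, \tfrac{1}{2}]$ with $|W| < \delta$, together with a sequence of finite subsets $F^{(n)} \subset \Lambda$ and unit-$\ell^2$ coefficient vectors $c^{(n)}$ on $F^{(n)}$, such that
\begin{equation*}
\Big\| \sum_{\lambda \in F^{(n)}} c^{(n)}_\lambda \, e^{2\pi i \lambda x} \Big\|_{L^2([-\tfrac{1}{2}, \tfrac{1}{2}] \setminus W)} \;\longrightarrow\; 0 \quad \text{as } n \to \infty.
\end{equation*}
In particular, $E(\Lambda)$ fails the lower Riesz bound on $L^2([-\tfrac{1}{2}, \tfrac{1}{2}] \setminus W)$.

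\textbf{Steps 2--3 (union and monotonicity).} Apply Step 1 to each $\Lambda_\ell$ with budget $\delta_\ell := \epsilon \cdot 2^{-\ell-1}$ to obtain exceptional sets $V_\ell \subset [-\tfrac{1}{2}, \tfrac{1}{2}]$ with $|V_\ell| < \delta_\ell$ and associated coefficient sequences $c^{(\ell,n)}$. Set $V := \bigcup_{\ell \geq 1} V_\ell$ and $S := [-\tfrac{1}{2}, \tfrac{1}{2}] \setminus V$; by countable subadditivity, $|V| < \epsilon/2$, so $|S| > 1 - \epsilon$. Since $S \subset [-\tfrac{1}{2}, \tfrac{1}{2}] \setminus V_\ell$ for every $\ell$, the monotonicity $\|f\|_{L^2(S)} \leq \|f\|_{L^2([-\tfrac{1}{2}, \tfrac{1}{2}] \setminus V_\ell)}$ ensures that the sequences $c^{(\ell,n)}$ from Step~1 also witness the failure of the lower Riesz bound for $E(\Lambda_\ell)$ in $L^2(S)$. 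Hence $E(\Lambda_\ell)$ is not a Riesz sequence in $L^2(S)$ for every $\ell \in \N$, as desired.

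\textbf{Main obstacle.} The substantive content lies entirely in Step~1 — the Olevskii--Ulanovskii construction producing a unit-$\ell^2$ exponential sum whose $L^2$-mass concentrates on an arbitrarily small set, using only separation and $D^+(\Lambda) > 0$. The remaining ingredients are routine: an affine change of variables to place the hole inside $[-\tfrac{1}{2}, \tfrac{1}{2}]$, and a geometric-series measure budget. Once Step~1 is available with a prescribed hole size (which is the essential contribution of \cite{OU08}), the simultaneous handling of countably many separated sets of positive upper Beurling density is essentially free.
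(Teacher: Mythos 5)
Your proposal is correct, but it reaches the conclusion by a more modular route than the paper. The paper does not invoke the single-set result (Theorem~\ref{thm:Theorem4-in-OU08}) as a black box; instead it re-runs the Olevskii--Ulanovskii construction from scratch: for each quality level $\eta=\epsilon/2^k$ it builds, via the $\ell_1$-sequences $a^{(\eta/2^\ell)}$ and Lemma~\ref{lem:Lemma5-1-in-OU08} (Szemer\'edi), one set $V_\eta=\cup_\ell V_\eta^{(\ell)}$ that simultaneously carries, for \emph{every} $\ell$, a witness with ratio at most $R\eta^2$, and then unions over $k$ to drive the ratio to $0$. You instead fix $\ell$, extract from the single-set theorem an exceptional set $V_\ell$ of measure $<\epsilon 2^{-\ell-1}$ together with witnesses whose ratio already tends to $0$, and union over $\ell$ only; the two nested unions are thus reorganized, but the underlying mechanism --- monotonicity of $\|f\|_{L^2(\cdot)}$ under shrinking the domain, which the paper also uses in the step $\int_S\le\int_{[-1/2,1/2]\setminus V_{\epsilon/2^k}}$ --- is identical. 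Your reduction is valid, and it makes transparent that the countable-family statement is an essentially free corollary of the single-set theorem; what the paper's inline derivation buys is self-containedness and, more importantly, an explicit description of $S$ (Remark~\ref{rem:structure-of-S-Lambda1-Lambda2-Lambda3}) and of the auxiliary objects $p_{\eta/2^\ell}$, $V_\eta^{(\ell)}$ that are reused for Theorem~\ref{thm:main-result-S-arbit-long-AP-fixed-common-diff} and the subsequent remarks. Two small points you should make explicit: (i) ``$E(\Lambda)$ is not a Riesz sequence in $L^2(S_0)$'' yields failure of the \emph{lower} bound only because the upper (Bessel) bound is automatic for a separated $\Lambda$ on a bounded $S_0$ (as recorded in \ref{sec:PW-language}); and (ii) in your affine transport from $[0,2]$ to $[-\frac12,\frac12]$, the dilation of Lemma~\ref{lem:RB-basic-operations}(c) rescales the spectrum, so you must apply the $[0,2]$-version to $\frac12\Lambda_\ell$ rather than to $\Lambda_\ell$ --- or simply cite Theorem~\ref{thm:Theorem4-in-OU08}, which is already normalized to $[-\frac12,\frac12]$ and makes the transport unnecessary.
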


Let us present some interesting implications of our main results.

By convention, a discrete set $\Lambda = \{ \lambda_n \}_{n \in \Z} \subset \R$ with $\lambda_n < \lambda_{n+1}$ is called
\emph{periodic with period $t>0$} (or \emph{$t$-periodic})
if there is a number $N \in \N$ such that $\lambda_{n+N} - \lambda_n = t$ for all $n \in \Z$.
Note that if $\Lambda \subset \R$ is a nonempty periodic set with period $\alpha \cdot \frac{P}{Q} \in \alpha \Q$, where $P, Q \in \N$ are coprime numbers, then it must contain a translated copy of $\alpha P \Z$, that is,
$\alpha P \Z {+} d  \subset  \Lambda$
for some $d \in \R$.
As a result,
we have the following corollary of Theorem \ref{thm:main-result-S-arbit-long-AP-fixed-common-diff}.

\begin{corollary}\label{cor:main-result-S-without-rationally-periodic-Riesz-spectrum}
For any $0 < \alpha \leq 1$ and $0 < \epsilon < 1$,
let $S \subset [-\frac{1}{2}, \frac{1}{2}]$ be the set given by \eqref{eqn:first-thm-def-set-S}.
Then for any nonempty periodic set $\Lambda \subset \R$ with period belonging in $\alpha \Q_+ \,{=}\, \alpha \Q \cap (0,\infty)$, the set $E(\Lambda)$ is not a Riesz sequence in $L^2(S)$.
Consequently, the set $S$ does not admit a Riesz spectrum containing a nonempty periodic set with period belonging in $\alpha \Q_+$
\end{corollary}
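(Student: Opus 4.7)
The plan is to reduce the corollary to Theorem \ref{thm:main-result-S-arbit-long-AP-fixed-common-diff} by showing that every nonempty periodic set $\Lambda \subset \R$ whose period lies in $\alpha \Q_+$ must contain arbitrarily long arithmetic progressions with a single fixed common difference belonging to $\alpha \N$. Once this is established, the first assertion is immediate from the theorem, and the ``consequently'' clause follows since any subset of a Riesz basis is a Riesz sequence.

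First I would unpack the periodicity. Write the period as $t = \alpha \cdot \frac{P}{Q} \in \alpha \Q_+$ with coprime $P, Q \in \N$, and enumerate $\Lambda = \{ \lambda_n \}$ in increasing order. By definition of $t$-periodicity, there exists $N \in \N$ with $\lambda_{n+N} - \lambda_n = t$ for all $n$, so $\Lambda + t \subset \Lambda$ and, iterating both forward and backward, $\Lambda + t \Z \subset \Lambda$. Fixing any $\lambda_0 \in \Lambda$ yields $\lambda_0 + t \Z \subset \Lambda$, and multiplying the lattice $t\Z$ by $Q$ gives the sublattice $\alpha P \Z = Q t \Z$, whence $\lambda_0 + \alpha P \Z \subset \Lambda$. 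Thus $\Lambda$ contains the two-sided progression $\lambda_0 + \alpha P \Z$, and in particular contains arbitrarily long finite arithmetic progressions with the fixed common difference $\alpha P \in \alpha \N$.

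Next I would invoke Theorem \ref{thm:main-result-S-arbit-long-AP-fixed-common-diff} verbatim: since the set $S$ defined by \eqref{eqn:first-thm-def-set-S} is exactly the set produced by that theorem for the given $\alpha$ and $\epsilon$, and since $\Lambda$ satisfies the hypothesis of the theorem (arbitrarily long arithmetic progressions with fixed common difference $\alpha P \in \alpha \N$), we conclude that $E(\Lambda)$ is not a Riesz sequence in $L^2(S)$.

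For the final sentence, I would argue by contradiction: suppose $S$ admitted a Riesz spectrum $\Lambda' \subset \R$ with $\Lambda' \supset \Lambda_0$ for some nonempty periodic $\Lambda_0$ whose period lies in $\alpha \Q_+$. Then $E(\Lambda')$ would be a Riesz basis for $L^2(S)$, so the subfamily $E(\Lambda_0)$ would be a Riesz sequence in $L^2(S)$; but the first part of the corollary, applied to $\Lambda_0$, rules this out. There is no substantial obstacle here — all the analytical work is absorbed in Theorem \ref{thm:main-result-S-arbit-long-AP-fixed-common-diff}, and the corollary amounts to the elementary algebraic remark that clearing the denominator $Q$ converts a rational period into an integer (times $\alpha$) common difference, plus the routine fact that subfamilies of Riesz bases are Riesz sequences.
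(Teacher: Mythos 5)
Your proposal is correct and follows essentially the same route as the paper: the author likewise observes that a nonempty periodic set with period $\alpha P/Q$ (with $P,Q$ coprime) contains a translated copy of $\alpha P\,\Z$, hence arbitrarily long arithmetic progressions with fixed common difference $\alpha P\in\alpha\N$, and then applies Theorem \ref{thm:main-result-S-arbit-long-AP-fixed-common-diff} together with the fact that subfamilies of Riesz bases are Riesz sequences. Your write-up merely makes the denominator-clearing step and the final contradiction more explicit than the paper does.
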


It is worth noting that the class of nonempty periodic sets with rational period is \emph{uncountable},
because of the flexibility in placement of elements in each period;
hence, Corollary \ref{cor:main-result-S-without-rationally-periodic-Riesz-spectrum} cannot be deduced from Theorem \ref{thm:Lambda1-Lambda2-Lambda3-introduction}.

As mentioned in Section~\ref{subsec:known-results-overview}, Agora et al.~\cite{AAC15} constructed an \emph{unbounded} $2$-tile set $S \subset \R$ with respect to $\Z$ which does not admit a Riesz spectrum of the form $(\Z {+} \sigma_1 ) \cup (\Z {+} \sigma_2)$ with $\sigma_1, \sigma_2 \in \R$.
By a dilation, one could easily generalize this example to an \emph{unbounded} $2$-tile set $W \subset \R$ with respect to $\frac{1}{\alpha} \Z$ for any fixed $\alpha > 0$, which does not admit a Riesz spectrum of the form $(\alpha \Z {+} \sigma_1 ) \cup (\alpha \Z {+} \sigma_2)$ with $\sigma_1, \sigma_2 \in \R$.
Note that such a form of Riesz spectrum is $\alpha$-periodic and thus not admitted by our set $S$ given by \eqref{eqn:first-thm-def-set-S} with any $0 < \epsilon < 1$. In fact, our set $S$ has a much stronger property than $W$, namely that $S$ does not admit a periodic Riesz spectrum with period belonging in $\alpha \Q_+$ and moreover, the set $S$ is \emph{bounded}.

Since the set $S$ is contained in $[-\frac{1}{2}, \frac{1}{2}]$, it is particularly interesting to consider the frequency sets consisting of integers $\Omega \subset \Z$.
Noting that a periodic subset of $\Z$ is necessarily $N$-periodic for some $N \in \N$, we immediately deduce the following result from Corollary \ref{cor:main-result-S-without-rationally-periodic-Riesz-spectrum}.

\begin{corollary}
\label{cor:periodic-integer-set-Lambda}
Let $S \subset [-\frac{1}{2}, \frac{1}{2}]$ be the set given by \eqref{eqn:first-thm-def-set-S} with $\alpha = 1$ and any $0 < \epsilon < 1$.
Then for any nonempty periodic set $\Omega \subset \Z$, the set $E(\Omega)$ is not a Riesz sequence in $L^2(S)$.
\end{corollary}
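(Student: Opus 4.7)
The plan is to derive this corollary as an immediate specialization of Corollary \ref{cor:main-result-S-without-rationally-periodic-Riesz-spectrum} at the parameter $\alpha = 1$. The set $S$ appearing here is precisely the one defined by \eqref{eqn:first-thm-def-set-S} with $\alpha = 1$, so that earlier corollary applies verbatim to any nonempty periodic $\Lambda \subset \R$ whose period lies in $\Q_+ = 1 \cdot \Q_+$. It therefore suffices to verify the elementary observation that every nonempty periodic set $\Omega \subset \Z$ automatically has period belonging to $\N \subset \Q_+$.

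To check this, I would enumerate $\Omega = \{\omega_n\}_{n \in \Z}$ in strictly increasing order and apply the definition of periodicity given just before Corollary \ref{cor:main-result-S-without-rationally-periodic-Riesz-spectrum}: there exist $N \in \N$ and $t > 0$ such that $\omega_{n+N} - \omega_n = t$ for every $n \in \Z$. Since each $\omega_n$ is an integer, the difference $\omega_{n+N} - \omega_n$ is itself an integer, and being a positive real it must lie in $\N$. Hence $t \in \N \subset \Q_+$, as required.

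With this observation in hand, I would apply Corollary \ref{cor:main-result-S-without-rationally-periodic-Riesz-spectrum} with $\alpha = 1$ to the periodic set $\Omega$ viewed as a subset of $\R$, which yields directly that $E(\Omega)$ is not a Riesz sequence in $L^2(S)$. There is no substantive obstacle in this argument: all the real work has been absorbed into Theorem \ref{thm:main-result-S-arbit-long-AP-fixed-common-diff} and Corollary \ref{cor:main-result-S-without-rationally-periodic-Riesz-spectrum}, and the present statement merely records the most natural specialization, namely to integer spectra, which are the primary case of interest since $S \subset [-\tfrac{1}{2}, \tfrac{1}{2}]$.
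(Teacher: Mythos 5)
Your proposal is correct and is essentially identical to the paper's own argument: the paper likewise deduces this corollary from Corollary \ref{cor:main-result-S-without-rationally-periodic-Riesz-spectrum} with $\alpha=1$ by observing that a nonempty periodic subset of $\Z$ is necessarily $N$-periodic for some $N\in\N\subset\Q_+$. Your explicit verification that the period $t=\omega_{n+N}-\omega_n$ is a positive integer is exactly the one-line observation the paper leaves implicit.
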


Alternatively, one could construct such a set $S \subset [-\frac{1}{2}, \frac{1}{2}]$ from Theorem \ref{thm:Lambda1-Lambda2-Lambda3-introduction} by observing that the family of all nonempty periodic integer sets is \emph{countable}; indeed, the one and only nonempty $1$-periodic integer set is $\Z$, the nonempty $2$-periodic integer sets are $2\Z$, $2\Z{+}1$, $\Z$, and so on.

Further, it is easy to deduce the following result from Corollary \ref{cor:periodic-integer-set-Lambda} and Proposition \ref{prop:Prop5-4-BCMS19} below, by setting $V := [-\frac{1}{2}, \frac{1}{2}] \backslash S$ and $\Omega' := \Z \backslash \Omega$.

\begin{corollary}
\label{cor:periodic-integer-set-Lambda-complement-sets}
Let $S \subset [-\frac{1}{2}, \frac{1}{2}]$ be the set given by \eqref{eqn:first-thm-def-set-S} with $\alpha = 1$ and any $0 < \epsilon < 1$,
and let $V := [-\frac{1}{2}, \frac{1}{2}] \backslash S$. Then for any proper periodic subset $\Omega' \subsetneq \Z$, the set $E(\Omega')$ is not a frame for $L^2(V)$.
\end{corollary}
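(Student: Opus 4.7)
The proof plan is essentially a two-line chain: take complements in both the frequency domain and the spatial domain, and apply the two earlier facts the hint points to. Let me spell out the steps.

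First, I would unpack the hypothesis. If $\Omega' \subsetneq \Z$ is a proper periodic subset with period $N \in \N$, meaning $\Omega' + N = \Omega'$, then its complement $\Omega := \Z \setminus \Omega'$ satisfies $\Omega + N = \Omega$ and is nonempty (because $\Omega'$ is a proper subset of $\Z$). Hence $\Omega$ is a nonempty periodic subset of $\Z$, which is exactly the shape of frequency set to which Corollary \ref{cor:periodic-integer-set-Lambda} applies. That corollary then gives that $E(\Omega)$ is not a Riesz sequence in $L^2(S)$.

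Next, I would pass from a statement about Riesz sequences on $S$ to a statement about frames on the complement $V = [-\tfrac12,\tfrac12]\setminus S$ via the duality recorded in Proposition \ref{prop:Prop5-4-BCMS19}. That proposition (in the form we need) says the following: if $[-\tfrac12,\tfrac12] = S \sqcup V$ and $\Z = \Omega \sqcup \Omega'$ are essentially disjoint partitions, then $E(\Omega)$ is a Riesz sequence in $L^2(S)$ if and only if $E(\Omega')$ is a frame for $L^2(V)$. Applying the contrapositive to the two partitions just constructed turns "$E(\Omega)$ is not a Riesz sequence in $L^2(S)$" into "$E(\Omega')$ is not a frame for $L^2(V)$", which is precisely the conclusion.

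There is no real obstacle here once Corollary \ref{cor:periodic-integer-set-Lambda} and the complementary duality are in hand; the only thing to verify is the bookkeeping that the complement of a proper periodic integer subset is again a nonempty periodic integer subset, so that the hypotheses of the corollary are met. The content of the result therefore lies entirely in Corollary \ref{cor:periodic-integer-set-Lambda} (which in turn rests on Theorem \ref{thm:main-result-S-arbit-long-AP-fixed-common-diff}) together with the standard Riesz-sequence/frame duality on complementary sets, and the present corollary is just the natural reformulation of that content on the "holes" $V$ rather than on $S$.
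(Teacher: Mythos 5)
Your proposal is correct and follows exactly the route the paper indicates: pass to the complementary frequency set $\Omega=\Z\setminus\Omega'$, which is nonempty and periodic, apply Corollary \ref{cor:periodic-integer-set-Lambda} to get that $E(\Omega)$ is not a Riesz sequence in $L^2(S)$, and then invoke the equivalence of (ii) and (iii) in Proposition \ref{prop:Prop5-4-BCMS19} with $\mathcal{M}=L^2(V)$, $\mathcal{M}^\perp=L^2(S)$, $J=\Omega$ to conclude. This is precisely the paper's own (one-line) derivation.
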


The significance of Corollary \ref{cor:periodic-integer-set-Lambda-complement-sets} is
in the fact that for any $N \in \N$ and any proper subset $I \subsetneq \{ 0, \ldots, N-1 \}$,
the set of exponentials $E \big( \cup_{n \in I} (N\Z {+} n) \big)$
is not a frame for $L^2(V)$ even though the set $V$ has very small Lebesgue measure $|V| < \epsilon$.
Note that $E(\Z)$ is a frame for $L^2(V)$ with frame bounds $A = B =1$, since it is an orthonormal basis for $L^2[0,1]$.

\section{Preliminaries}
\label{sec:prelim}

\subsection{Sequences in separable Hilbert spaces}

\begin{definition}\label{def:HilbertSpace-Sequences}
A sequence $\{ f_n \}_{n\in\Z}$ in a separable Hilbert space $\mathcal{H}$ is called
\begin{itemize}
\item a \emph{Bessel sequence} in $\mathcal{H}$ (with a Bessel bound $B$) if there is a constant $B>0$ such that
\[
\sum_{n\in\Z} \big| \langle f , f_n \rangle \big|^{2}
\;\leq\;
B \, \| f \|^2
\quad \text{for all} \;\; f \in \mathcal{H};
\]

\item  a \emph{frame} for $\mathcal{H}$ (with frame bounds $A$ and $B$) if there are constants $0 < A \leq B < \infty$ such that
\[
A \, \| f \|^2
\;\leq\;
\sum_{n\in\Z} |\langle f , f_n\rangle|^{2}
\;\leq\;
B \, \| f \|^2
\quad \text{for all} \;\; f \in \mathcal{H};
\]

\item  a \emph{Riesz sequence} in $\mathcal{H}$ (with Riesz bounds $A$ and $B$) if there are constants $0 < A \leq B < \infty$ such that
\[
A \, \| c \|_{\ell_2}^2
\;\leq\;
\Big\| \sum_{n\in\Z} c_n \, f_n \Big\|^2
\;\leq\;
B \, \| c \|_{\ell_2}^2
\quad \text{for all} \;\; c=\{c_n\}_{n\in\Z} \in \ell_2 (\mathbb Z);
\]

\item  a \emph{Riesz basis} for $\mathcal{H}$ (with Riesz bounds $A$ and $B$) if it is a complete Riesz sequence in $\mathcal{H}$ (with Riesz bounds $A$ and $B$);

\item  an \emph{orthogonal basis} for $\mathcal{H}$ if it is a complete sequence of nonzero elements in $\mathcal{H}$ such that $\langle f_m, f_n \rangle = 0$ whenever $m \neq n$.

\item  an \emph{orthonormal basis} for $\mathcal{H}$ if it is complete and $\langle f_m, f_n \rangle = \delta_{m,n}$ whenever $m \neq n$.
\end{itemize}
The associated bounds $A$ and $B$ are said to be \emph{optimal} if they are the tightest constants satisfying the respective inequality.
\end{definition}

In general, an orthonormal basis is a Riesz basis with Riesz bounds $A=B=1$, but an orthogonal basis is not necessarily norm-bounded below and thus generally not a Riesz basis (for instance, consider the sequence $\{ \frac{e_n}{n} \}_{n=1}^\infty$ where $\{ e_n \}_{n=1}^\infty$ is an orthonormal basis for $\mathcal{H}$).
Nevertheless, exponential functions have constant norm in $L^2(S)$ for any finite measure set $S \subset \R^d$, namely $\| e^{2 \pi i \lambda \cdot (\cdot)} \|_{L^2(S)} = \sqrt{|S|}$ for all $\lambda \in \R^d$.
Thus, an \emph{exponential orthogonal basis} is simply an \emph{exponential orthonormal basis} scaled by a common multiplicative factor.

\begin{proposition}\label{prop:facts-RieszBasesFR}
Let $\mathcal{H}$ be a separable Hilbert space.\\
(a) \cite[Corollary 3.7.2]{Ch16} Every subfamily of a Riesz basis is a Riesz sequence with the same bounds (the optimal bounds may be tighter). \\
(b) \cite[Corollary 8.24]{He11} If $\{ f_n \}_{n\in\Z}$ is a Bessel sequence in $\mathcal{H}$ with Bessel bound $B$, then $\| f_i \|^2 \leq B$ for all $i \in I$.
If $\{ f_n \}_{n\in\Z}$ is a Riesz sequence in $\mathcal{H}$ with bounds $0 < A \leq B < \infty$, then $A \leq \| f_i \|^2 \leq B$ for all $i \in I$. \\
(c) \cite[Lemma 3.6.9, Theorems 3.6.6, 5.4.1, and 7.1.1]{Ch16} (or see \cite[Theorems 7.13, 8.27, and 8.32]{He11})
Let $\{ e_n \}_{n\in\Z}$ be an orthonormal basis for $\mathcal{H}$ and let $\{ f_n \}_{n\in\Z} \subset \mathcal{H}$. The following are equivalent.
\begin{itemize}
\item
$\{ f_n \}_{n\in\Z}$ is a Riesz basis for $\mathcal{H}$.
\item
$\{ f_n \}_{n\in\Z}$ is an exact frame (i.e., a frame that ceases to be a frame whenever a single element is removed) for $\mathcal{H}$.
\item
$\{ f_n \}_{n\in\Z}$ is an unconditional basis of $\mathcal{H}$ with $0 < \inf_{n\in\Z} \| f_n \| \leq \sup_{n\in\Z} \| f_n \| < \infty$.
\item
There is a bijective bounded operator $T : \mathcal{H} \rightarrow \mathcal{H}$ such that $Te_n=f_n$ for all $n \in \Z$.
\end{itemize}
Moreover in this case, the optimal frame bounds coincide with the optimal Riesz bounds.
\end{proposition}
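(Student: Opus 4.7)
The plan is to derive (a) and (b) by substituting carefully chosen test elements into the defining inequalities, and to organize the long equivalence list in (c) around the bounded invertible operator $T : \mathcal{H} \to \mathcal{H}$ determined by $T e_n = f_n$. For (a), I would take any coefficient sequence supported on $J \subset \Z$ and extend it to all of $\Z$ by zero-padding; the $\ell^2$ norm and the sum $\sum_n c_n f_n$ are unchanged, so the two-sided Riesz inequalities for the full family restrict to the same inequalities for the subfamily with identical bounds. For the Bessel part of (b), substituting $f = f_i$ into the Bessel inequality and keeping only the diagonal term gives $\|f_i\|^4 \leq B \|f_i\|^2$, hence $\|f_i\|^2 \leq B$. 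For a Riesz sequence, the upper bound then follows because every Riesz sequence is automatically Bessel with the same upper bound $B$, while the lower bound $A \leq \|f_i\|^2$ comes from plugging the coordinate sequence $c = (\delta_{n,i})_{n \in \Z}$ into the lower Riesz inequality.

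For (c), I would build the operator $T$ first: given a Riesz basis $\{ f_n \}_{n \in \Z}$, define $T$ on finite sums $\sum c_n e_n$ by linearity so that $T e_n = f_n$, and read off the Riesz inequalities as the two-sided bound $\sqrt{A}\,\|x\| \leq \|T x\| \leq \sqrt{B}\,\|x\|$. This says $T$ extends to a bounded linear operator on $\mathcal{H}$ with bounded inverse on its range, and completeness of $\{ f_n \}_{n \in \Z}$ gives surjectivity, so $T$ is a bijective bounded operator; conversely, any such $T$ transports the Parseval identity for $\{ e_n \}_{n \in \Z}$ back into the Riesz inequalities. From $T$, the frame inequalities follow via the adjoint identity $\langle f, f_n \rangle = \langle T^{*} f, e_n \rangle$ and Parseval, with optimal bounds $\|T^{-1}\|^{-2}$ and $\|T\|^{2}$ coinciding with the optimal Riesz bounds, which yields the final sentence of the proposition. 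Exactness of the resulting frame follows because each $f_i$ is the $T$-image of $e_i$, so removing it destroys completeness of the image. The unconditional-basis characterization is obtained by transporting the unconditional convergence of orthonormal expansions through $T$, with the two-sided norm bounds supplied by (b).

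The main obstacle is the converse implication in (c), namely that an \emph{exact} frame is already a Riesz basis, which cannot be reached by substitution alone. Here I would invoke the canonical dual frame $\{ \tilde{f}_n \}_{n \in \Z}$: exactness is equivalent to biorthogonality $\langle f_m, \tilde{f}_n \rangle = \delta_{m,n}$, so the analysis operator is injective and maps $\mathcal{H}$ onto a closed subspace of $\ell^2(\Z)$. Combined with the upper frame bound this forces the frame operator to be boundedly invertible and positive, and a standard Gram-matrix computation then produces the operator $T$ with $T e_n = f_n$ together with the two-sided Riesz inequalities. This is the only step that draws on genuine frame theory rather than substitution, so it is where I would concentrate the most effort; the remaining implications reduce to bookkeeping once $T$ is in hand.
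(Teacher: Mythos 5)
This proposition is stated in the paper as a compilation of standard facts with citations to Christensen and Heil; the paper gives no proof of its own, so there is nothing to compare your argument against except the textbook proofs it points to. Your sketch follows essentially those standard arguments: zero-padding coefficient sequences for (a), testing the Bessel inequality against $f = f_i$ and the Riesz inequalities against coordinate sequences for (b), and organizing (c) around the operator $T$ with $Te_n = f_n$, including the correct identification of the optimal bounds as $\|T^{-1}\|^{-2}$ and $\|T\|^{2}$ via $\langle f, f_n\rangle = \langle T^*f, e_n\rangle$. All of that is sound. The one place where your chain of reasoning does not quite close is the implication ``exact frame $\Rightarrow$ Riesz basis.'' You correctly invoke the equivalence of exactness with biorthogonality to the canonical dual $\{\tilde f_n\}$, but the intermediate facts you then list --- injectivity and closed range of the \emph{analysis} operator, bounded invertibility and positivity of the frame operator --- hold for \emph{every} frame, exact or not, so they cannot by themselves produce the lower Riesz inequality. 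What biorthogonality actually buys is injectivity of the \emph{synthesis} operator $U: \ell^2(\Z)\to\mathcal{H}$, $Uc=\sum_n c_nf_n$: if $Uc=0$ then $c_m=\langle Uc,\tilde f_m\rangle=0$ (the interchange of sum and inner product being justified by the Bessel bound). Since $U$ is bounded and, for a frame, surjective, injectivity plus the open mapping theorem gives a bounded inverse and hence the two-sided Riesz inequalities. With that substitution your argument is complete; the rest is the bookkeeping you describe.
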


\begin{proposition}[Proposition 5.4 in \cite{BCMS19}]
\label{prop:Prop5-4-BCMS19}
Let $\{ e_n \}_{n \in \Z}$ be an orthonormal basis of a separable Hilbert space $\mathcal{H}$.
Let $P : \mathcal{H} \rightarrow \mathcal{M}$ be the orthogonal projection from $\mathcal{H}$ onto a closed subspace $\mathcal{M}$.
Let $J \subset \Z$ and $0 < \alpha < 1$. The following are equivalent.
\begin{itemize}
\item[$\mathrm{(\romannumeral 1)}$] $\{ P e_n \}_{n \in J} \subset \mathcal{M}$ is a Bessel sequence with optimal bound $1-\alpha$. (Note that $\{ P e_n \}_{n \in J}$ is always a Bessel sequence with bound $1$.)

\item[$\mathrm{(\romannumeral 2)}$] $\{ P e_n \}_{n \in J^c} \subset \mathcal{M}$ is a frame for $\mathcal{M}$ with optimal lower bound $\alpha$ and upper bound $1$ (not necessarily optimal).

\item[$\mathrm{(\romannumeral 3)}$] $\{ (\mathrm{Id}-P) e_n \}_{n \in J} \subset \mathcal{M}^{\perp}$ is a Riesz sequence with optimal lower bound $\alpha$ and upper bound $1$ (not necessarily optimal).
\end{itemize}
\end{proposition}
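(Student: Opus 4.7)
The proof rests on the Parseval identity for $\{e_n\}_{n\in\Z}$ together with the decomposition $e_n = Pe_n + (\mathrm{Id}-P)e_n$. My starting point is the observation that $\{Pe_n\}_{n\in\Z}$ is a Parseval frame for $\mathcal{M}$: for $f\in\mathcal{M}$ one has $Pf=f$, hence $\langle f, Pe_n\rangle = \langle Pf, e_n\rangle = \langle f, e_n\rangle$, and consequently
\[
\sum_{n\in\Z}|\langle f, Pe_n\rangle|^2 = \sum_{n\in\Z}|\langle f, e_n\rangle|^2 = \|f\|^2.
\]

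For (i) $\Leftrightarrow$ (ii), I would split this identity as $\sum_{n\in J}|\langle f, Pe_n\rangle|^2 + \sum_{n\in J^c}|\langle f, Pe_n\rangle|^2 = \|f\|^2$ and take supremum (resp.\ infimum) over the unit sphere of $\mathcal{M}$. The optimal Bessel bound of $\{Pe_n\}_{n\in J}$ equals $1-\alpha$ if and only if the optimal lower frame bound of $\{Pe_n\}_{n\in J^c}$ equals $\alpha$; the non-optimal upper bound $1$ in (ii) is immediate from the identity above.

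For (i) $\Leftrightarrow$ (iii), given $c=\{c_n\}_{n\in J}\in\ell_2(J)$, I would set $g := \sum_{n\in J} c_n e_n \in \mathcal{H}$, so that $\|g\|^2 = \|c\|_{\ell_2}^2$, and use the orthogonal decomposition $\mathcal{H}=\mathcal{M}\oplus\mathcal{M}^{\perp}$ to write
\[
\Big\|\sum_{n\in J} c_n (\mathrm{Id}-P)e_n\Big\|^2 = \|(\mathrm{Id}-P)g\|^2 = \|c\|_{\ell_2}^2 - \Big\|\sum_{n\in J}c_n Pe_n\Big\|^2.
\]
The upper Riesz bound $1$ in (iii) then follows from $\|Pg\|^2\geq 0$, and the optimal lower Riesz bound equals $\alpha$ if and only if $\sup_{\|c\|_{\ell_2}=1}\|\sum_{n\in J} c_n Pe_n\|^2 = 1-\alpha$. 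Recognizing this last supremum as $\|T\|^2=\|T^*\|^2$ for the synthesis operator $T:\ell_2(J)\to\mathcal{M}$ given by $Tc=\sum_{n\in J} c_n Pe_n$, and noting that $\|T^*\|^2$ equals the optimal Bessel bound of $\{Pe_n\}_{n\in J}$, closes the loop with (i).

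The proof presents no genuine obstacle; the only point requiring care is the bookkeeping of \emph{optimality}. Each equivalence reduces to a Parseval or Pythagoras identity between two quantities whose extrema over a unit sphere are precisely the constants in question, so one must confirm that the identification preserves the relevant supremum or infimum. I expect the argument to be short and essentially mechanical once the Parseval-frame observation for $\{Pe_n\}_{n\in\Z}$ is in hand.
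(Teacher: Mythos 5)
Your proof is correct: the Parseval-frame identity for $\{Pe_n\}_{n\in\Z}$ restricted to $\mathcal{M}$, the Pythagorean splitting $\|c\|_{\ell_2}^2=\|Pg\|^2+\|(\mathrm{Id}-P)g\|^2$, and the identification of the optimal Bessel bound with $\|T\|^2=\|T^*\|^2$ together give all three equivalences with the optimality bookkeeping handled properly. Note that the paper itself offers no proof of this proposition --- it is quoted directly from [BCMS19] --- so there is nothing to compare against; your argument is the standard one and is complete.
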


\subsection{Exponential systems}

As already introduced in Section~\ref{sec:intro}, we define the exponential system $E(\Lambda)=\{e^{2\pi i \lambda \cdot (\cdot)} : \lambda\in\Lambda\}$
for a discrete set $\Lambda \subset \R^d$ (called a \emph{frequency set} or a \emph{spectrum}).

\begin{lemma}
\label{lem:RB-basic-operations}
Assume that $E(\Lambda)$ is a Riesz basis for $L^2(S)$ with optimal bounds $0 < A \leq B < \infty$, where $\Lambda \subset \R^d$ is a discrete set and $S \subset \R^d$ is a measurable set. \\
(a) For any $a \in \R^d$, $E(\Lambda)$ is a Riesz basis for $L^2(S+a)$ with bounds $A$ and $B$. \\
(b) For any $b \in \R^d$, $E(\Lambda + b)$ is a Riesz basis for $L^2(S)$ with bounds $A$ and $B$. \\
(c) For any $\sigma > 0$, $\sqrt{\sigma} \, E(\sigma \Lambda)$ is a Riesz basis for $L^2(\frac{1}{\sigma} S)$ with bounds $A$ and $B$, equivalently, $E(\sigma \Lambda)$ is a Riesz basis for $L^2(\frac{1}{\sigma} S)$ with bounds $\frac{A}{\sigma}$ and $\frac{B}{\sigma}$.
\end{lemma}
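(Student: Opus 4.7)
The plan is to prove each of the three statements by exhibiting an explicit unitary isomorphism between the relevant $L^2$ spaces and then using the elementary fact that unitary operators transport Riesz bases to Riesz bases with identical bounds (by Proposition \ref{prop:facts-RieszBasesFR}(c), a Riesz basis is exactly the image of an orthonormal basis under a bijective bounded operator, and the optimal bounds are those of the operator and its inverse). In each case the image of $E(\Lambda)$ under the isomorphism will coincide, up to unit-modulus scalars or a global multiplicative constant, with the target exponential system.

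For part (a) I would introduce the translation operator $U_a \colon L^2(S) \to L^2(S+a)$ defined by $(U_a f)(x) = f(x-a)$, which is unitary by a one-line change of variables. Since $U_a$ sends $e^{2\pi i \lambda \cdot x}|_S$ to $e^{-2\pi i \lambda \cdot a}\, e^{2\pi i \lambda \cdot x}|_{S+a}$, the system $\{ e^{-2\pi i \lambda \cdot a} e^{2\pi i \lambda \cdot x} \}_{\lambda \in \Lambda}$ is a Riesz basis for $L^2(S+a)$ with bounds $A, B$. To conclude for $E(\Lambda)$ itself, I would observe the general fact that multiplying each element of a Riesz basis by a unit-modulus scalar preserves completeness and both Riesz bounds: for any coefficients $\{c_\lambda\}$ one has $|c_\lambda \, e^{-2\pi i \lambda \cdot a}| = |c_\lambda|$, so the $\ell^2$-norm is unchanged and the inequality in Definition \ref{def:HilbertSpace-Sequences} transfers verbatim. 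For (b) I would use the modulation $M_b \colon L^2(S) \to L^2(S)$ given by $(M_b f)(x) = e^{2\pi i b \cdot x} f(x)$, which is unitary since $|e^{2\pi i b \cdot x}|=1$ pointwise, and which maps $e^{2\pi i \lambda \cdot x}$ to $e^{2\pi i (\lambda + b) \cdot x}$, yielding the claim at once. For (c) I would use the dilation $D_\sigma \colon L^2(S) \to L^2(\tfrac{1}{\sigma} S)$ defined by $(D_\sigma f)(x) = \sqrt{\sigma}\, f(\sigma x)$, whose unitarity follows from the substitution $y = \sigma x$. Under $D_\sigma$ the exponential $e^{2\pi i \lambda \cdot x}$ maps to $\sqrt{\sigma}\, e^{2\pi i (\sigma \lambda) \cdot x}$, giving the first formulation; the equivalent second formulation is obtained by pulling the common factor $\sqrt{\sigma}$ outside each basis element, which rescales both Riesz bounds by $1/\sigma$.

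I do not expect any genuine obstacle: the argument is essentially the observation that bijective isometries preserve the Riesz-basis property with identical bounds. The only items requiring an explicit sentence of justification are the invariance of the Riesz bounds under multiplication of each basis element by a unit-modulus scalar (needed in (a)) and the predictable rescaling of the bounds when every basis element is multiplied by a fixed positive constant (needed to pass between the two formulations in (c)); both follow from a direct manipulation of the defining two-sided inequality.
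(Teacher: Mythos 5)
Your proposal is correct and follows essentially the same route as the paper: the paper likewise reduces (a) to the observation that multiplying each basis element by a unit-modulus phase $e^{2\pi i \lambda\cdot a}$ leaves the Riesz bounds unchanged, and handles (b) and (c) via the unitarity of modulation and dilation together with Proposition \ref{prop:facts-RieszBasesFR}(c). The only cosmetic remark is that in dimension $d>1$ the unitary normalization of the dilation is $\sigma^{d/2}$ rather than $\sqrt{\sigma}$, a point the paper's own statement and proof also gloss over since only $d=1$ is used.
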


A proof of Lemma \ref{lem:RB-basic-operations} is given in \ref{sec:proof-of-preliminaries}.

\begin{remark}
\rm
Lemma \ref{lem:RB-basic-operations} remains valid if the term ``Riesz basis'' is replaced with one of the following: ``Riesz sequence'', ``frame'', and ``frame sequence'' (and also ``Bessel sequence'' in which case the lower bound is simply neglected).
\end{remark}

\begin{theorem}[The Paley-Wiener stability theorem \cite{PW34}]
\label{thm:PaleyWienerStability}
Let $V \subset \R$ be a bounded set of positive measure and $\Lambda = \{ \lambda_n \}_{n \in \Z} \subset \R$ be a sequence of real numbers such that $E(\Lambda)$ is a Riesz basis for $L^2(V)$ (resp.~a frame for $L^2(V)$, a Riesz sequence in $L^2(V)$). There exists a constant $\theta = \theta(\Lambda,V) > 0$ such that whenever $\Lambda' = \{ \lambda_n' \}_{n \in \Z} \subset \R$ satisfies
\[
| \lambda_n' - \lambda_n | \leq \theta, \quad n \in \Z ,
\]
the set of exponentials $E(\Lambda')$ is a Riesz basis for $L^2(V)$ (resp.~a frame for $L^2(V)$, a Riesz sequence in $L^2(V)$).
\end{theorem}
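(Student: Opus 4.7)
The plan is a perturbation argument based on a Taylor expansion of the exponentials. Since $V$ is bounded, pick $M > 0$ with $V \subset [-M,M]$, and write $\delta_n := \lambda_n' - \lambda_n$, so $|\delta_n| \leq \theta$ by hypothesis. Starting from the identity
\[
e^{2\pi i \lambda_n' x} - e^{2\pi i \lambda_n x} \;=\; e^{2\pi i \lambda_n x} \sum_{k=1}^\infty \frac{(2\pi i \delta_n x)^k}{k!},
\]
the triangle inequality in $L^2(V)$ gives, for every finitely supported $c = \{c_n\}_{n\in\Z}$,
\[
\Big\| \sum_n c_n \bigl( e^{2\pi i \lambda_n' x} - e^{2\pi i \lambda_n x} \bigr) \Big\|_{L^2(V)} \;\leq\; \sum_{k=1}^\infty \frac{(2\pi M)^k}{k!} \, \Big\| \sum_n c_n \delta_n^k \, e^{2\pi i \lambda_n x} \Big\|_{L^2(V)}.
\]

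Next I would exploit that in each of the three cases $E(\Lambda)$ is in particular a Bessel sequence with some bound $B$. This yields $\| \sum_n c_n \delta_n^k \, e^{2\pi i \lambda_n x} \|_{L^2(V)} \leq \sqrt{B}\, \|\{c_n \delta_n^k\}_n\|_{\ell_2} \leq \sqrt{B}\, \theta^k \|c\|_{\ell_2}$, and summing the resulting exponential series produces
\[
\Big\| \sum_n c_n \bigl( e^{2\pi i \lambda_n' x} - e^{2\pi i \lambda_n x} \bigr) \Big\|_{L^2(V)} \;\leq\; \sqrt{B}\, \bigl( e^{2\pi M \theta} - 1 \bigr)\, \|c\|_{\ell_2}.
\]
Letting $T, T' : \ell_2(\Z) \to L^2(V)$ denote the synthesis operators of $E(\Lambda)$ and $E(\Lambda')$, this is precisely the operator-norm estimate $\|T - T'\| \leq \sqrt{B}(e^{2\pi M \theta} - 1)$, which tends to $0$ as $\theta \to 0^+$.

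Then I would fix $\theta$ so small that $\sqrt{B}(e^{2\pi M \theta}-1) < \sqrt{A}$, where $A$ is the optimal lower Riesz (resp.~frame) bound, and conclude separately in each case. For the Riesz sequence case, the reverse triangle inequality yields $\|T' c\|_{L^2(V)} \geq (\sqrt{A} - \sqrt{B}(e^{2\pi M \theta}-1)) \|c\|_{\ell_2}$, while the upper bound follows from $\|T'\| \leq \|T\| + \|T-T'\|$. For the Riesz basis case, Proposition \ref{prop:facts-RieszBasesFR}(c) identifies $T$ with a bounded isomorphism, so $\|T^{-1}\| \leq 1/\sqrt{A}$; since $\|T-T'\| < \|T^{-1}\|^{-1}$, a standard Neumann-series argument shows $T'$ is also an isomorphism, and thus $E(\Lambda')$ is a Riesz basis. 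For the frame case, I would apply the same argument to the adjoint analysis operator $T^* : L^2(V) \to \ell_2(\Z)$, noting that $E(\Lambda)$ is a frame precisely when $T^*$ is bounded below by $\sqrt{A}$, and that $\|(T')^* - T^*\| = \|T' - T\|$; hence $(T')^*$ inherits the lower bound, giving the lower frame inequality for $E(\Lambda')$, with the upper bound again following from $\|(T')^*\| \leq \|T^*\| + \|T^*-(T')^*\|$.

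The main technical obstacle is passing from the pointwise smallness $|e^{2\pi i \lambda_n' x} - e^{2\pi i \lambda_n x}| = O(\theta)$ to a uniform bound on the operator $T - T'$. A naive term-by-term triangle inequality loses a factor depending on the cardinality of the spectrum and is useless when $\Lambda$ is infinite. The Taylor-expansion device above circumvents this obstruction precisely by re-casting each $k$-th error as a new linear combination over the \emph{same} spectrum $\Lambda$, at which point the Bessel property of $E(\Lambda)$ itself supplies the required $\ell_2$-to-$L^2$ estimate with a clean $\theta^k$ decay that sums to something of order $\theta$.
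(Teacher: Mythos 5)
Your proof is correct. The paper does not supply its own proof of this theorem --- it defers to \cite[p.\,160]{Yo01} for a single interval and to \cite[Section 2.3]{KN15} for general bounded $V$ --- and your Taylor-expansion of $e^{2\pi i\delta_n x}-1$ combined with the Bessel bound (to get $\|T-T'\|\le\sqrt{B}\,(e^{2\pi M\theta}-1)$) followed by a Neumann-series/lower-bound perturbation argument is essentially the classical proof found in those references; the only points worth stating explicitly are the justification for interchanging the sums over $k$ and $n$ (immediate for finitely supported $c$, since the $k$-series converges absolutely and uniformly on the bounded set $V$) and the density argument extending the operator-norm estimate from finitely supported sequences to all of $\ell_2(\Z)$.
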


For a proof of Theorem \ref{thm:PaleyWienerStability}, we refer to \cite[p.\,160]{Yo01} for the case where $V$ is a single interval, and \cite[Section 2.3]{KN15} for the general case.
It is worth noting that the constant $\theta = \theta(\Lambda,V)$ depends on the Riesz bounds of the Riesz basis $E(\Lambda)$ for $L^2(V)$, which are determined once $\Lambda$ and $V$ are given.
Also, it is pointed out in \cite[Section 2.3, Remark 2]{KN15} that the theorem holds also for frames and Riesz sequences.

\subsection{Density of frequency sets}
\label{subsec:prelim-density}

The lower and upper (Beurling) density of a discrete set $\Lambda \subset \R^d$ is defined respectively by (see e.g., \cite{He07})
\[
\begin{split}
& D^-(\Lambda) = \liminf_{r \rightarrow \infty} \frac{\inf_{x \in \R^d} |\Lambda \cap [x,x+r] |}{r}
\qquad \text{and} \\
& D^+(\Lambda) = \limsup_{r \rightarrow \infty} \frac{\sup_{x \in \R^d} |\Lambda \cap [x,x+r] |}{r} .
\end{split}
\]
If $D^-(\Lambda) = D^+(\Lambda)$, we say that $\Lambda$ has \emph{uniform} (Beurling) density $D(\Lambda) := D^-(\Lambda) = D^+(\Lambda)$.
A discrete set $\Lambda \subset \R^d$ is called \emph{separated} (or \emph{uniformly discrete}) if its separation constant $\Delta(\Lambda) := \inf\{ |\lambda - \lambda'| : \lambda \neq \lambda' \in \Lambda \}$ is positive.
We will always label the elements of a separated set $\Lambda$ in the increasing order, that is, $\Lambda = \{ \lambda_n \}_{n \in \Z}$ with $\lambda_n < \lambda_{n+1}$ for all $n \in \Z$.

The following proposition is considered folklore. The corresponding statements for Gabor systems of $L^2(\R^d)$ are well-known (see \cite[Theorem 1.1]{CDH99} and also \cite[Lemma 2.2]{GOR15}) and the following proposition can be proved similarly.

\begin{proposition}\label{prop:FR-rel-sep-RS-sep}
Let $\Lambda \subset \R^d$ be a discrete set and let $S \subset \R^d$ be a finite positive measure set which is not necessarily bounded. \\
(i) If $E(\Lambda)$ is a Bessel sequence in $L^2(S)$, then $D^+(\Lambda) < \infty$. \\
(ii) If $E(\Lambda)$ is a Riesz sequence in $L^2(S)$, then $\Lambda$ is separated, i.e., $\Delta(\Lambda) > 0$.
\end{proposition}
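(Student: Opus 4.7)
The plan is to apply the defining inequalities to carefully chosen test sequences.

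For (i), I would take $g = \chi_S$, which lies in $L^2(S)$ with $\| g \|_{L^2(S)}^2 = |S|$ since $|S| < \infty$. Its Fourier transform
\[
\widehat{g}(\xi) \;=\; \int_S e^{-2\pi i \xi \cdot t}\, dt
\]
is continuous on $\R^d$ with $\widehat{g}(0) = |S| > 0$, so there exists $\rho > 0$ such that $|\widehat{g}(\xi)| \geq |S|/2$ on the cube $Q_\rho := [-\rho/2, \rho/2]^d$. The crucial step is to apply the Bessel inequality not to $g$ itself but to its modulations $g_y(t) := g(t)\, e^{2\pi i y \cdot t}$ for arbitrary $y \in \R^d$. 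Since $\| g_y \|_{L^2(S)} = \| g \|_{L^2(S)}$ and $\langle g_y, e^{2\pi i \lambda \cdot t}\rangle_{L^2(S)} = \widehat{g}(\lambda - y)$, the Bessel inequality yields
\[
\sum_{\lambda \in \Lambda} \big| \widehat{g}(\lambda - y) \big|^2 \;\leq\; B \, |S|
\]
for every $y \in \R^d$. Dropping all terms outside $\Lambda \cap (y + Q_\rho)$ produces the uniform bound $|\Lambda \cap (y + Q_\rho)| \leq 4B/|S|$ independent of $y$. Covering an interval (or cube) $[x, x+r]$ by $O(r/\rho)$ translates of $Q_\rho$ then gives $|\Lambda \cap [x, x+r]| = O(r)$, from which $D^+(\Lambda) < \infty$ follows.

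For (ii), I would argue by contradiction. If $\Lambda$ were not separated, one could find pairs $\lambda \neq \lambda'$ in $\Lambda$ with $|\lambda - \lambda'|$ arbitrarily small. Applying the lower Riesz bound to the finitely supported coefficient sequence with $c_\lambda = 1$, $c_{\lambda'} = -1$ and all other entries zero gives
\[
2A \;\leq\; \big\| e^{2\pi i \lambda \cdot t} - e^{2\pi i \lambda' \cdot t} \big\|_{L^2(S)}^2 \;=\; \int_S \big| e^{2\pi i (\lambda - \lambda') \cdot t} - 1 \big|^2\, dt .
\]
The integrand converges pointwise to $0$ as $|\lambda - \lambda'| \to 0$ and is dominated by $4\chi_S \in L^1(\R^d)$, so dominated convergence forces the right-hand side to $0$, contradicting $A > 0$.

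There is no deep obstacle in either part; the only subtlety is making the lower estimate on $|\widehat{g}|$ in (i) uniform in $y$, which is achieved for free because modulation is an $L^2(S)$-isometry and the Bessel constant does not depend on the test vector. Notably, neither the boundedness of $S$ nor any a priori density or separation assumption on $\Lambda$ enters the argument, matching the generality of the proposition.
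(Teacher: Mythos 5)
Your proof is correct and uses essentially the same ingredients as the paper's: part (ii) is verbatim the paper's argument (Kronecker delta differences, lower Riesz bound, dominated convergence), and part (i) rests on the same test function $\chi_S$ (up to normalization), the continuity of its Fourier transform with $\widehat{\chi_S}(0)=|S|>0$, and the fact that modulation is an $L^2(S)$-isometry so the Bessel bound applies uniformly to all shifts of the frequency window. The only difference is organizational: the paper proves (i) by contraposition, assuming $D^+(\Lambda)=\infty$ and subdividing a long interval of high density to locate a short interval containing many points of $\Lambda$, then exhibiting unit vectors $f_n$ with $\sum_\lambda |\langle f_n, e^{2\pi i \lambda\cdot(\cdot)}\rangle|^2\to\infty$, whereas you directly extract the uniform count $|\Lambda\cap(y+Q_\rho)|\leq 4B/|S|$ and conclude by a covering argument --- a slightly cleaner route that also yields the explicit bound $D^+(\Lambda)\leq 4B/(|S|\rho^d)$, but the underlying mechanism is identical.
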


A proof of Proposition \ref{prop:FR-rel-sep-RS-sep} is given in \ref{sec:proof-of-preliminaries}.

\begin{theorem}[\cite{La67,NO12}]
\label{thm:Landau}
Let $\Lambda \subset \R^d$ be a discrete set and let $S \subset \R^d$ be a finite positive measure set. \\
(i) If $E(\Lambda)$ is a frame for $L^2(S)$, then $|S| \leq D^-(\Lambda) \leq D^+(\Lambda) < \infty$. \\
(ii) If $E(\Lambda)$ is a Riesz sequence in $L^2(S)$, then $\Lambda$ is separated and $D^+(\Lambda) \leq |S|$.
\end{theorem}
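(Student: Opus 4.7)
The plan is to reduce Theorem \ref{thm:Landau} to Landau's classical density theorem for sampling and interpolation sequences in Paley--Wiener spaces, dispatching the side conditions directly with Proposition \ref{prop:FR-rel-sep-RS-sep}. The finiteness $D^+(\Lambda)<\infty$ in (i) is immediate from Proposition \ref{prop:FR-rel-sep-RS-sep}(i), since every frame is a Bessel sequence, and the separation $\Delta(\Lambda)>0$ in (ii) is exactly Proposition \ref{prop:FR-rel-sep-RS-sep}(ii). What remains is to show $|S|\leq D^-(\Lambda)$ under the frame hypothesis and $D^+(\Lambda)\leq |S|$ under the Riesz-sequence hypothesis.

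I would transport the problem to the Paley--Wiener space $PW_S := \{\widehat{g} : g \in L^2(\R^d),\; \supp g \subset S\} \subset L^2(\R^d)$ via the unitary isomorphism $g \mapsto \widehat{g}$ from $L^2(S)$ to $PW_S$. The elementary identity $\langle g, e^{2\pi i \lambda \cdot x}\rangle_{L^2(S)} = \widehat{g}(\lambda)$ simultaneously identifies (a) the frame inequality for $E(\Lambda)$ in $L^2(S)$ with the \emph{sampling} inequality $A\|f\|^2 \leq \sum_{\lambda\in\Lambda} |f(\lambda)|^2 \leq B\|f\|^2$ for $f\in PW_S$, and (b) the Riesz-sequence inequality for $E(\Lambda)$ in $L^2(S)$ with the \emph{interpolation} property that $f \mapsto \{f(\lambda)\}_{\lambda \in \Lambda}$ is a bounded surjection from $PW_S$ onto $\ell^2(\Lambda)$. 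The task then becomes: every sampling sequence for $PW_S$ satisfies $D^-(\Lambda)\geq |S|$, and every interpolation sequence satisfies $D^+(\Lambda)\leq |S|$.

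To prove these density bounds I would execute Landau's concentration-operator argument. For a large cube $Q_r(x) := x+[-r/2,r/2]^d$ consider the positive, trace-class operator $T_{x,r}: PW_S \to PW_S$ given by $T_{x,r}f = P_{PW_S}(\chi_{Q_r(x)}\,f)$. Its trace equals $|S|\,r^d$, and the Slepian-style plunge estimate shows that for any fixed $\delta\in(0,1)$ the number of eigenvalues of $T_{x,r}$ lying in $(\delta,1-\delta)$ is $o(r^d)$, uniformly in $x$. Pairing this with the sampling inequality, applied to the near-$1$ eigenfunctions (which are essentially concentrated on $Q_r(x)$), yields $|\Lambda \cap Q_r(x)| \geq |S|\,r^d - o(r^d)$ and hence $D^-(\Lambda) \geq |S|$. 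The interpolation bound $D^+(\Lambda)\leq |S|$ is dual, obtained by comparing the biorthogonal family furnished by the interpolation hypothesis with the near-$0$ eigenspace of $T_{x,r}$. The extension of this framework to finite-measure but possibly unbounded $S$ is the contribution of Nitzan--Olevskii \cite{NO12}.

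The main obstacle is the plunge-region estimate itself, namely the control of the number of intermediate eigenvalues of $T_{x,r}$. This reduces to a trace-class bound on $T_{x,r}(\mathrm{Id}-T_{x,r})$, essentially of order $r^{d-1}\log r$, and is the technical heart of Landau's argument; it is also where the Nitzan--Olevskii refinement must enter to accommodate unbounded $S$ of finite measure. Every other step---the Paley--Wiener translation, the eigenvalue counting, and the density conclusion---is routine given this spectral input.
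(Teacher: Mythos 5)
The paper does not prove Theorem~\ref{thm:Landau} at all: it is imported verbatim from \cite{La67,NO12}, and no argument for it appears anywhere in the text or the appendices (the appendix only proves Lemma~\ref{lem:RB-basic-operations}, Proposition~\ref{prop:FR-rel-sep-RS-sep}, and Lemma~\ref{lem:Lemma5-1-in-OU08}). So there is no internal proof to compare against; what you have written is a reconstruction of the standard external argument, and as such it is essentially correct. Your dispatching of the side conditions is exactly what the paper's own toolkit supports: $D^+(\Lambda)<\infty$ from Proposition~\ref{prop:FR-rel-sep-RS-sep}(i) via frame $\Rightarrow$ Bessel, separation from Proposition~\ref{prop:FR-rel-sep-RS-sep}(ii), and the frame/sampling and Riesz-sequence/interpolation dictionary is the one spelled out in \ref{sec:PW-language} (for the interpolation direction you implicitly use that a Riesz sequence is automatically Bessel, which is fine). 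The concentration-operator outline --- trace $|S|\,r^d$, plunge-region count $o(r^d)$, comparison of $|\Lambda\cap Q_r(x)|$ with the near-$1$ (resp.\ near-$0$) eigenspace --- is Landau's argument and gives the two density inequalities. One mild mischaracterization: the Nitzan--Olevskii contribution for general finite-measure, possibly unbounded $S$ is not a refinement of the plunge estimate inside Landau's scheme; their proof in \cite{NO12} replaces the spectral machinery by a more elementary finite-dimensional comparison argument. This does not affect the correctness of your sketch, but if you intend the concentration-operator route to cover unbounded $S$ you would need to justify the $o(r^d)$ plunge bound in that generality, which is precisely what is nontrivial and why the paper simply cites the result.
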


\begin{corollary}
\label{cor:Landau}
Let $\Lambda \subset \R^d$ be a discrete set and let $S \subset \R^d$ be a finite positive measure set.
If $E(\Lambda)$ is a Riesz basis for $L^2(S)$, then $\Lambda$ is separated and has uniform Beurling density $D(\Lambda) = |S|$.
\end{corollary}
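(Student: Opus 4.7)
The plan is to deduce Corollary \ref{cor:Landau} directly from Theorem \ref{thm:Landau} by exploiting the fact that a Riesz basis is, by definition, a complete Riesz sequence, and by Proposition \ref{prop:facts-RieszBasesFR}(c) it is also a (exact) frame. Thus both parts (i) and (ii) of Theorem \ref{thm:Landau} apply to the same set $\Lambda$.

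First I would invoke Theorem \ref{thm:Landau}(ii) on $E(\Lambda)$, viewed as a Riesz sequence in $L^2(S)$, to obtain the two conclusions that $\Lambda$ is separated and that $D^+(\Lambda) \leq |S|$. This already disposes of the separation claim. Next I would invoke Theorem \ref{thm:Landau}(i) on $E(\Lambda)$, viewed as a frame for $L^2(S)$, to obtain $|S| \leq D^-(\Lambda)$. Combining these inequalities with the trivial bound $D^-(\Lambda) \leq D^+(\Lambda)$ yields the chain
\[
|S| \;\leq\; D^-(\Lambda) \;\leq\; D^+(\Lambda) \;\leq\; |S|,
\]
forcing equality throughout. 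Hence $D^-(\Lambda) = D^+(\Lambda) = |S|$, which by definition means $\Lambda$ has uniform Beurling density $D(\Lambda) = |S|$.

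There is essentially no obstacle here; the statement is a straightforward synthesis of the two halves of Landau's theorem together with the definitional fact that a Riesz basis is simultaneously a frame and a Riesz sequence. The only small point to mention for completeness is the equivalence recorded in Proposition \ref{prop:facts-RieszBasesFR}(c), which justifies treating a Riesz basis as a frame in order to apply part (i) of Theorem \ref{thm:Landau}.
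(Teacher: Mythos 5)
Your proof is correct and is precisely the intended derivation: the paper states this as an immediate corollary of Theorem \ref{thm:Landau}, obtained by applying part (ii) to $E(\Lambda)$ as a Riesz sequence (giving separation and $D^+(\Lambda)\le|S|$) and part (i) to $E(\Lambda)$ as a frame via Proposition \ref{prop:facts-RieszBasesFR}(c) (giving $|S|\le D^-(\Lambda)$), then squeezing. No gaps.
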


\section{A result of Olevskii and Ulanovskii}
\label{sec:review-of-OU08}

As our main results (Theorems \ref{thm:main-result-S-arbit-long-AP-fixed-common-diff} and \ref{thm:Lambda1-Lambda2-Lambda3-introduction}) hinge on the proof technique of Olevskii and Ulanovskii \cite{OU08}, we will briefly review the relevant result in \cite{OU08}.

\begin{theorem}[Theorem 4 in \cite{OU08}]
\label{thm:Theorem4-in-OU08}
Let $0 < \epsilon < 1$ and let $\Lambda \subset \R$ be a separated set with $D^+ (\Lambda) > 0$.
One can construct a measurable set $S = S(\epsilon,\Lambda) \subset [-\frac{1}{2}, \frac{1}{2}]$ with $| S | > 1 - \epsilon$ such that $E(\Lambda)$ is not a Riesz sequence in $L^2(S)$.
\end{theorem}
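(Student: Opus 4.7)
The plan is to reduce the theorem to a concentration statement for finite exponential sums with frequencies in $\Lambda$, and then to build the obstacle set $V := [-\tfrac{1}{2},\tfrac{1}{2}] \setminus S$ of measure $< \epsilon$ so that the resulting $S$ defeats the Riesz lower bound for $E(\Lambda)$.  First, the hypothesis $D := D^+(\Lambda) > 0$ supplies, for each $N \in \N$, a translate $a_N \in \R$ and a window length $r_N \leq 3N/D$ (say) such that the cluster $\Lambda_N := \Lambda \cap [a_N, a_N + r_N]$ has at least $N$ elements.  Since any subfamily of a Riesz sequence is itself a Riesz sequence with no smaller lower Riesz bound (apply the defining inequality to coefficient sequences supported on the subfamily), it suffices to exhibit, for each $N$, a unit vector $c^{(N)} \in \ell^2(\Lambda_N)$ whose exponential sum
\[
p_N(x) \;:=\; \sum_{\lambda \in \Lambda_N} c^{(N)}_\lambda \, e^{2\pi i \lambda x}
\]
satisfies $\|p_N\|_{L^2(S)}^2 \to 0$ as $N \to \infty$; this alone forces the optimal lower Riesz bound of $E(\Lambda)$ in $L^2(S)$ to vanish.

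The analytic core is a concentration lemma: given $\epsilon > 0$, one should be able to choose a set $V \subset [-\tfrac{1}{2},\tfrac{1}{2}]$ with $|V| < \epsilon$ and, for every $N$, a unit coefficient vector $c^{(N)}$, such that
\[
\int_V |p_N(x)|^2 \, dx \;\geq\; \|p_N\|_{L^2[-\frac{1}{2},\frac{1}{2}]}^2 - \delta_N, \qquad \delta_N \xrightarrow{N \to \infty} 0,
\]
with $\|p_N\|_{L^2[-\frac{1}{2},\frac{1}{2}]}^2$ bounded uniformly from above by the Bessel bound coming from the separation of $\Lambda$.  The model case $\Lambda_N = \{0,1,\ldots,N{-}1\}$, $c^{(N)}_\lambda = N^{-1/2}$, and $V = [-\eta,\eta]$ with $\eta < \epsilon/2$ is instructive: $|p_N|^2$ is the normalised Fej\'er kernel $\tfrac{1}{N}\bigl(\tfrac{\sin \pi N x}{\sin \pi x}\bigr)^2$ of total mass $1$, and the elementary bound $|p_N(x)|^2 \leq \tfrac{1}{N \sin^2 \pi x}$ gives $\delta_N = O\bigl(1/(N\eta)\bigr)$, vanishing as $N \to \infty$ for fixed $\eta$.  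For a general, possibly irregular separated cluster $\Lambda_N$ in an interval of length $r_N \asymp N$, neither the single-interval choice of $V$ nor the flat choice of $c^{(N)}$ is adequate: the peaks of $|p_N|^2$ may be distributed over several locations dictated by the arithmetic structure of $\Lambda$ (cf.\ the $\Lambda = 2\Z$ case, which forces two peaks per unit interval), and the correct $c^{(N)}$ is not the flat one but an appropriate eigenvector of the time--frequency localization operator associated with $V$ and $\Lambda_N$.  Producing $V$ (a union of short intervals of total measure $< \epsilon$, adapted to the pattern of $\Lambda$) together with coefficient vectors $c^{(N)}$ such that the concentration is effective \emph{simultaneously for a sequence $N \to \infty$} is the main obstacle, and is exactly where the Olevskii--Ulanovskii machinery of \cite{OU08} is deployed, leaning on the separation of $\Lambda$ and on Plancherel--P\'olya / Ingham-type almost-orthogonality estimates together with a careful choice of the covering $V$.

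Granted the concentration lemma, the conclusion is immediate.  Setting $S := [-\tfrac{1}{2},\tfrac{1}{2}] \setminus V$ we have $|S| > 1 - \epsilon$ and
\[
\int_S |p_N(x)|^2 \, dx \;=\; \|p_N\|_{L^2[-\frac{1}{2},\frac{1}{2}]}^2 - \int_V |p_N(x)|^2 \, dx \;\leq\; \delta_N \xrightarrow{N \to \infty} 0,
\]
so the optimal lower Riesz bound of $E(\Lambda_N)$ in $L^2(S)$ tends to $0$, and by the reduction of the first paragraph $E(\Lambda)$ fails to be a Riesz sequence in $L^2(S)$.
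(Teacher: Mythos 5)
Your opening and closing reductions are fine: to kill the lower Riesz bound it suffices to produce, for a sequence of finite subsets of $\Lambda$, unit coefficient vectors whose exponential sums have $L^2(S)$ norm tending to $0$, with a single obstacle set $V=[-\frac12,\frac12]\setminus S$ of measure $<\epsilon$. But everything of substance is packed into your ``concentration lemma,'' which you do not prove --- you state it and then remark that proving it ``is the main obstacle'' and is ``where the Olevskii--Ulanovskii machinery is deployed.'' That lemma \emph{is} the theorem; deferring it to unnamed machinery leaves a genuine gap. Worse, the route you gesture at would not close it: the mechanism in \cite{OU08} (reproduced in the proofs of Proposition~\ref{prop:seq-Z-2Z-3Z} and Theorem~\ref{thm:Lambda1-Lambda2-Lambda3-introduction}) is not an eigenvector analysis of a time--frequency localization operator, nor Plancherel--P\'olya/Ingham almost-orthogonality, and it does not work with a cluster of $N$ points of $\Lambda$ in a window of length $O(N)$. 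It is an arithmetic device: Szemer\'edi's theorem, packaged as Lemma~\ref{lem:Lemma5-1-in-OU08}, extracts from $\Lambda$ (after rounding to $\frac1N\Z$) an \emph{approximate arithmetic progression} $s(-M)<\dots<s(M)$ with $|s(j)-cj-d|\le\delta$ for some integer $c$ that is not controlled in advance and may be huge, with the $s(j)$ spread over an interval of length about $2Mc\gg M$. One then takes the coefficients $a_j$ to be the (truncated) Fourier coefficients of a $1$-periodic bump $p_\eta$ supported on an interval of length $\sim\eta$; the $\ell_1$-smallness of the tail and the $\delta$-closeness to the exact progression give the \emph{uniform} estimate $\bigl|\sum_j a_j e^{2\pi i s(j)x}-e^{2\pi i dx}p_\eta(cx)\bigr|\lesssim\eta$, so the sum is concentrated on $\supp p_\eta(c\,\cdot)$, a $\frac1c$-periodic union of intervals of total measure $<\eta$ inside $[-\frac12,\frac12]$. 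Taking $V$ to be the union of these supports over $\eta=\epsilon/2^k$ gives $|V|<\epsilon$.

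Concretely, two of your assertions would mislead anyone trying to complete the argument. First, the flat/Fej\'er model and the ``cluster in a window of length $r_N\asymp N$'' framing suggest that density of the cluster is what drives concentration; it is not --- what matters is the existence of long (approximate) arithmetic progressions, and the resulting concentration set is the $\frac1c$-periodization of a short interval, not a neighborhood of finitely many peaks whose number you can bound a priori. Second, you cannot choose $V$ before knowing the common differences $c$ produced by Szemer\'edi's theorem, since $c$ can be arbitrarily large and determines the periodization; this dependence of $V$ on the arithmetic structure of $\Lambda$ is exactly why the theorem yields $S=S(\epsilon,\Lambda)$ rather than a universal $S$, and it is the obstruction the surrounding paper is organized around (see Remark~\ref{rem:PowerOf4}). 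As written, your proposal is a correct skeleton with the load-bearing lemma missing and the pointers toward its proof aimed at the wrong tools.
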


The proof of Theorem \ref{thm:Theorem4-in-OU08} relies on a technical lemma (Lemma \ref{lem:Lemma5-1-in-OU08} below) which is based on the celebrated Szemer\'{e}di's theorem \cite{Sz75}
asserting that any integer set $\Omega \subset \Z$ with positive upper Beurling density\footnote{When restricted to subsets of integers $\Lambda \subset \Z$, the upper Beurling density is equal to the so-called \emph{upper Banach density} which is defined as
$\limsup_{r \rightarrow \infty} \sup_{n \in \Z} | \Lambda \cap \{ n{+}1, n{+}2\ldots, n{+}r \} | / r$.
In the literature, Szemer\'{e}di's theorem is often stated for sets $\Lambda \subset \N$ with positive \emph{upper natural density (upper asymptotic density)}
$\limsup_{r \rightarrow \infty} | \Lambda \cap \{ 1, 2, \ldots, r \} | / r > 0 $.
It should be noted that the statements of Szemer\'{e}di's theorem with different types of density are equivalent, but the proofs are not easily converted from one density type to the other.}
$D^+ (\Omega) > 0$ contains at least one arithmetic progression of length $M$ for all $M \in \N$. Here, an arithmetic progression of length $M$ means a sequence of the form
\[
d, \; d{+}P , \; d{+}2P, \; \ldots, \; d{+}(M{-}1)P
\qquad \text{with} \;\; d \in \Z \;\; \text{and} \;\; P \in \N .
\]

As a side remark, we mention that the common difference $P \in \N$ of the arithmetic progression resulting from Szemer\'{e}di's theorem,
can be restricted to a fairly sparse subset of positive integers $\mathcal{C} \subset \N$.
For instance, one can ensure that $P$ is a multiple of any prescribed number $L \in \N$, by passing to a subset of $\Omega$ that is contained in $L \Z {+} u$ for some $u \in \{ 0, 1, \ldots, L-1 \}$
and has positive upper Beurling density.
This allows us to take $\mathcal{C} = L \N$ which clearly satisfies $D^+ (\mathcal{C}) = 1/L$.
Further, one can even choose
$\mathcal{C} = \{ 1^q, 2^q, 3^q, \ldots \}$ for any $q \in \N$, which satisfies
\[
D^+ (\mathcal{C})
=
\begin{cases}
1 & \text{if} \;\; q=1 ,  \\
0 & \text{if} \;\; q>1 .
\end{cases}
\]
More generally, one may choose $\mathcal{C} = \{ p(n) : n \in \N \}$ for any polynomial $p$ with rational coefficients such that $p(0) = 0$ and $p(n) \in \Z$ for $n \in \Z \backslash \{ 0 \}$ (see \cite[p.733]{BL96}).
On the other hand, it was shown in \cite[Theorem 7]{ES77} that
$\mathcal{C} \subset \N$ cannot be a lacunary sequence, i.e., a sequence $\{ a_n \}_{n=1}^\infty$ satisfying $\liminf_{n \rightarrow \infty} a_{n+1} / a_n > 1$ (for instance, $\{ 2^n : n = 0, 1, 2, \ldots \}$).
Note that the aforementioned set $\mathcal{C} = \{ p(n) : n \in \N \}$ can be sparse but not lacunary since $\lim_{n \rightarrow \infty} p(n+1) / p(n) = 1$ for any polynomial $p$.
We refer to \cite[Section 2]{FLW16} for a short review on the possible choice of (deterministic) sets $\mathcal{C} \subset \N$, and also for the situation where $\mathcal{C}$ is chosen randomly.

\begin{lemma}[Lemma 5.1 in \cite{OU08}]
\label{lem:Lemma5-1-in-OU08}
Let $\Lambda \subset \R$ be a separated set with $D^+ (\Lambda) > 0$.
For any $M \in \N$ and $\delta > 0$, there exist constants $c = c (M, \delta, \Lambda) \in \N$, $d = d (M, \delta, \Lambda) \in \R$, and an increasing sequence $s(-M) < s(-M{+}1) < \ldots < s(M)$ in $\Lambda$ such that
\begin{equation}\label{eqn:Lemma5-1-in-OU08-condition}
\big| s(j) - c j - d \big| \leq \delta
\quad \text{for} \;\; j = -M, \ldots, M .
\end{equation}
Moreover, the constant $c = c (M, \delta, \Lambda) \in \N$ can be chosen to be a multiple of any prescribed number $L \in \N$.
\end{lemma}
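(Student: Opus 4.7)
The plan is to reduce to Szemer\'edi's theorem via a discretization that turns the separated set $\Lambda \subset \R$ into an integer set of positive upper Banach density. Let $\Delta := \Delta(\Lambda) > 0$ denote the separation constant of $\Lambda$, and choose a positive integer $N$ large enough that $\eta := 1/N$ satisfies $\eta < \min(\Delta, \delta, 1)$. Define the rounding map $\phi : \Lambda \to \Z$ by $\phi(\lambda) := \lfloor \lambda/\eta + 1/2 \rfloor$; the bound $\eta < \Delta$ makes $\phi$ injective (two points mapping to the same integer would lie within distance $\eta$ of each other, contradicting separation), so $\Omega := \phi(\Lambda) \subset \Z$ is in bijection with $\Lambda$. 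A direct box-counting argument then yields $D^+(\Omega) \geq \eta \cdot D^+(\Lambda) > 0$, using that for subsets of $\Z$ the upper Beurling density coincides with the upper Banach density.

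To force the eventual common difference $c$ to be a multiple of the prescribed $L \in \N$, I would refine $\Omega$ before invoking Szemer\'edi. Since $\Omega$ is partitioned into the cosets $\Omega \cap (LN\Z + u)$ for $u = 0, 1, \ldots, LN-1$, a pigeonhole argument applied to a sequence of intervals witnessing $D^+(\Omega) > 0$ produces an index $u$ for which $\Omega' := \Omega \cap (LN\Z + u)$ still has positive upper Banach density. Szemer\'edi's theorem applied to $\Omega'$ then yields an arithmetic progression of length $2M+1$ inside $\Omega'$; write it as $a - Mp, a - (M{-}1)p, \ldots, a + Mp$ with $a \in \Z$ and $p \in \N$. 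Since all $2M+1$ terms lie in the single coset $LN\Z + u$, the common difference $p$ is automatically a positive multiple of $LN$.

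For the transfer back to $\Lambda$, let $s(j) := \phi^{-1}(a + jp) \in \Lambda$ for $j \in \{-M, \ldots, M\}$, and set $c := p/N$ and $d := a/N$. Since $p$ is a positive multiple of $LN$, the number $c = p/N$ is a positive multiple of $L$, as required. By definition of $\phi$, $|s(j) - \eta(a + jp)| \leq \eta/2 < \delta$, which rewrites as $|s(j) - cj - d| \leq \delta$, proving \eqref{eqn:Lemma5-1-in-OU08-condition}. Because consecutive centers $cj + d$ are separated by $c \geq L \geq 1 > \eta$, the $\eta$-width tolerance intervals around them cannot overlap, so the strict monotonicity $s(-M) < s(-M+1) < \cdots < s(M)$ follows.

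The main obstacle I anticipate is coordinating the three scales at play---the discretization step $\eta$, the divisibility requirement $L$, and the common difference produced by Szemer\'edi---so that they mesh. The resolution is to fix $\eta = 1/N$ (encoding divisibility at the finer integer scale) and to pigeonhole $\Omega$ into a single coset of $LN\Z$ \emph{before} invoking Szemer\'edi's theorem, which then automatically forces the progression's common difference $p$ to be a multiple of $LN$; dividing by $N$ yields $c \in L\N$. The remaining ingredients---the density-transfer inequality, the injectivity of $\phi$, and the order preservation---are routine once this alignment is set up.
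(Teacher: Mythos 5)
Your proof is correct and follows essentially the same route as the paper's: round $\Lambda$ onto a grid $\frac{1}{N}\Z$ fine enough to preserve injectivity and stay within $\delta$, pass to an integer set of positive upper density, pigeonhole into a residue class modulo $LN$ to control divisibility of the common difference, and apply Szemer\'edi's theorem. The only cosmetic difference is that you build the $L$-divisibility into the coset decomposition from the outset, whereas the paper first works modulo $N$ and handles the ``moreover'' part by switching to modulo $LN$ afterwards.
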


As Lemma \ref{lem:Lemma5-1-in-OU08} will be used in the proof of Theorem \ref{thm:Lambda1-Lambda2-Lambda3-introduction}, we include a short proof of Lemma \ref{lem:Lemma5-1-in-OU08} in \ref{sec:proof-of-preliminaries} for self-containedness of the paper.

\section{Proof of Theorem \ref{thm:main-result-S-arbit-long-AP-fixed-common-diff}}
\label{sec:proof-first-result}

Before proving Theorem \ref{thm:main-result-S-arbit-long-AP-fixed-common-diff}, we note that Theorem \ref{thm:Lambda1-Lambda2-Lambda3-introduction} is an extension of Theorem \ref{thm:Theorem4-in-OU08} from a single set $\Lambda \subset \R$ to a countable family of sets $\Lambda_1, \Lambda_2, \ldots \subset \R$.
We will first consider a particular choice of sets
$\Lambda_1 \,{=}\, \alpha \Z$, $\Lambda_2 \,{=}\, 2 \alpha \Z$, $\Lambda_3 \,{=}\, 3 \alpha \Z, \cdots$ for any fixed $0 < \alpha \leq 1$, from which a desired set for Theorem \ref{thm:main-result-S-arbit-long-AP-fixed-common-diff} will be acquired.

\begin{proposition}\label{prop:seq-Z-2Z-3Z}
Let $0 < \alpha \leq 1$ and
$\Lambda_1 \,{=}\, \alpha \Z$, $\Lambda_2 \,{=}\, 2 \alpha \Z$, $\Lambda_3 \,{=}\, 3 \alpha \Z, \cdots$,
that is, $\Lambda_\ell \,{=}\, \ell \alpha \Z$ for $\ell \in \N$.
Given any $0 < \epsilon < 1$, one can construct a measurable set
$S \subset [-\frac{1}{2}, \frac{1}{2}]$
with $| S | > 1 - \epsilon$ such that $E(\Lambda_{\ell})$ is not a Riesz sequence in $L^2(S)$ for all $\ell \in \N$.
\end{proposition}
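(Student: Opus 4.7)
The plan is to take exactly the explicit set $S = [-\tfrac{1}{2}, \tfrac{1}{2}] \setminus V$ with $V$ as in \eqref{eqn:first-thm-def-set-S}, so that the same construction will simultaneously supply the set required for Theorem~\ref{thm:main-result-S-arbit-long-AP-fixed-common-diff}. Two things need to be verified: that $|S| > 1-\epsilon$, and that $E(\ell\alpha\Z)$ fails to be a Riesz sequence in $L^2(S)$ for every $\ell \in \N$. The design of $V$ is engineered so that the $\ell$th slice of the union both contributes only a small piece to $|V|$ and also puts a small neighborhood of every point of $\tfrac{1}{\ell\alpha}\Z$ outside of $S$; this double duty is what makes a single set handle all $\ell$ at once.

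For the measure bound I estimate $|V|$ by a crude double sum. For each fixed $\ell$, the indices $m \in \Z$ with $\tfrac{m}{\ell\alpha} \in [-\tfrac{1}{2},\tfrac{1}{2}]$ number at most $\ell\alpha + 1 \leq \ell+1$ (using $\alpha \leq 1$), and each corresponding closed interval in \eqref{eqn:first-thm-def-set-S} has length $\tfrac{2\epsilon}{\ell \cdot 2^{\ell+3}}$. Summing yields
\[
|V| \;\leq\; \sum_{\ell=1}^\infty (\ell+1) \cdot \frac{2\epsilon}{\ell \cdot 2^{\ell+3}}
\;=\; \frac{\epsilon}{4}\sum_{\ell=1}^\infty \Big(\frac{1}{2^\ell} + \frac{1}{\ell \, 2^\ell}\Big)
\;=\; \frac{\epsilon}{4}(1+\ln 2) \;<\; \epsilon ,
\]
so $|S| > 1-\epsilon$.

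For the non-Riesz property, I fix $\ell \in \N$ and run a standard periodization (Zak-type folding) argument. Every $c = \{c_n\}_{n\in\Z} \in \ell^2$ corresponds via Fourier series to a $1$-periodic function $g$ with $g|_{[0,1]} \in L^2[0,1]$, $\|g\|_{L^2[0,1]} = \|c\|_{\ell^2}$, and $\sum_n c_n e^{2\pi i \ell\alpha n x} = g(\ell\alpha x)$ in $L^2_{\mathrm{loc}}(\R)$. The substitution $y = \ell\alpha x$, followed by folding the resulting integral via the $1$-periodicity of $g$, gives
\[
\Big\| \sum_n c_n e^{2\pi i \ell\alpha n (\cdot)} \Big\|_{L^2(S)}^{\,2}
\;=\; \frac{1}{\ell\alpha} \int_0^1 |g(y)|^2 \, T(y) \, dy,
\qquad T(y) \;:=\; \sum_{k \in \Z} \chi_S\!\Big(\frac{y+k}{\ell\alpha}\Big).
\]
If $E(\ell\alpha\Z)$ were a Riesz sequence in $L^2(S)$ with lower bound $A > 0$, testing against $g = \chi_E$ for arbitrary measurable $E \subset [0,1]$ would force $T(y) \geq A\ell\alpha > 0$ for a.e. $y \in [0,1]$.

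I then reach a contradiction by showing $T$ vanishes on the positive-measure set $E_0 := \bigl[0, \tfrac{\alpha\epsilon}{2^{\ell+3}}\bigr]$. For $y \in E_0$ and any $k \in \Z$, the point $\tfrac{y+k}{\ell\alpha}$ lies in $\tfrac{k}{\ell\alpha} + \bigl[0, \tfrac{\epsilon}{\ell \cdot 2^{\ell+3}}\bigr]$, which is contained in the closed interval $\tfrac{k}{\ell\alpha} + \bigl[-\tfrac{\epsilon}{\ell \cdot 2^{\ell+3}}, \tfrac{\epsilon}{\ell \cdot 2^{\ell+3}}\bigr]$ appearing in \eqref{eqn:first-thm-def-set-S}. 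Hence this point either falls outside $[-\tfrac{1}{2},\tfrac{1}{2}]$ (and so outside $S$ since $S \subset [-\tfrac{1}{2},\tfrac{1}{2}]$) or it lies in $V$ (again outside $S$); in either case $\chi_S\bigl(\tfrac{y+k}{\ell\alpha}\bigr) = 0$ for every $k$, so $T \equiv 0$ on $E_0$, contradicting $T \geq A\ell\alpha > 0$. The only mildly delicate step is setting up the periodization identity cleanly (justifying $L^2(S)$-convergence of the exponential series and the change of variables); once that is in hand, everything else is routine bookkeeping, and the same $S$ handles all $\ell$ simultaneously.
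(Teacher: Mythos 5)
Your proof is correct, but it takes a genuinely different route from the paper's. The paper proves the proposition by exhibiting, for each $\eta$ and $\ell$, an explicit \emph{finitely supported} coefficient sequence --- the truncated Fourier coefficients of the narrow box function $\widetilde p_{\eta\alpha/2^\ell}$, transplanted onto the frequencies $\ell\alpha\Z$ --- whose energy outside a small exceptional set $V_\eta^{(\ell)}$ is at most $R\,\eta\,2^{-\ell}$ times its $\ell_2$-norm (claim \eqref{eqn:our-claim-Z-2Z-3Z}); stacking the sets $V_{\epsilon/2^k}$ over $k$ and letting $k\to\infty$ then kills the lower Riesz bound. You instead use the exact periodization (fiberization) identity for the lattice $\ell\alpha\Z$, reducing the lower Riesz inequality to the essential lower bound $T\geq A\ell\alpha$ for $T(y)=\sum_k\chi_S\big(\tfrac{y+k}{\ell\alpha}\big)$, and you exhibit a positive-measure interval on which $T\equiv 0$ because $V$ contains a one-sided neighborhood of every point of $\tfrac{1}{\ell\alpha}\Z$ inside $[-\tfrac12,\tfrac12]$. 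For this proposition your argument is shorter and more structural, and it needs no limiting sequence of test functions. What it gives up is precisely what the paper's quantitative approximation buys: since \eqref{eqn:our-claim-Z-2Z-3Z} uses only finitely many exponentials from an arithmetic progression, the same computation transfers verbatim to any $\Lambda$ containing arbitrarily long progressions with common difference $P\alpha$ (Theorem \ref{thm:main-result-S-arbit-long-AP-fixed-common-diff}) and, via Lemma \ref{lem:Lemma5-1-in-OU08}, to approximate progressions (Theorem \ref{thm:Lambda1-Lambda2-Lambda3-introduction}), whereas your exact folding identity is special to the full lattices $\Lambda_\ell=\ell\alpha\Z$. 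Two small points to tighten: in the measure estimate you should also count the at most two indices $m$ with $|m/(\ell\alpha)|$ slightly larger than $\tfrac12$ whose intervals still meet $[-\tfrac12,\tfrac12]$ (replacing $\ell+1$ by $\ell+3$ at worst still gives $|V|\leq\tfrac{\epsilon}{4}(1+3\ln 2)<\epsilon$); and the $L^2(S)$-convergence you flag as delicate follows from the same periodization identity applied on all of $[-\tfrac12,\tfrac12]$, which yields the Bessel bound $(\ell\alpha+1)/(\ell\alpha)$ for $E(\ell\alpha\Z)$.
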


\begin{proof}
Fix any $0 < \epsilon < 1$ and choose an integer $R > \frac{1}{1-\epsilon}$ so that $0 < \epsilon < \frac{R-1}{R}$.
We claim that for each $0 < \eta < \frac{R-1}{R}$ there exists a set $V_{\eta} \subset [-\frac{1}{2}, \frac{1}{2}]$ with $| V_{\eta} | < \eta$ satisfying the following property: for each $\ell \in \N$ there is a finitely supported sequence $b^{(\eta,\ell)} = \{ b^{(\eta,\ell)}_j \}_{j \in \Z}$ satisfying
\begin{equation}\label{eqn:our-claim-Z-2Z-3Z}
\int_{[-\frac{1}{2}, \frac{1}{2}] \backslash V_{\eta}}
\Big| \sum_{j \in \Z} b^{(\eta,\ell)}_j \, e^{2 \pi i \ell \alpha j x} \Big|^2 \, dx
\;\leq\;
R \, \tfrac{\eta}{2^\ell}
\sum_{j \in \Z} \big| b^{(\eta,\ell)}_j \big|^2  .
\end{equation}
If this claim is proved, one could take $V := \cup_{k=1}^\infty V_{\epsilon / 2^k}$ and $S := [-\frac{1}{2}, \frac{1}{2}] \backslash V$.
Indeed, we have $|V| \leq \sum_{k=1}^\infty | V_{\epsilon / 2^k} | < \sum_{k=1}^\infty \frac{\epsilon}{2^k} = \epsilon$, so that $| S | > 1 - \epsilon$.
Also, it holds for any $k, \ell \in \N$,
\[
\begin{split}
\int_S  \Big| \sum_{j \in \Z} b^{(\epsilon/2^k,\ell)}_j \, e^{2 \pi i \ell \alpha j x} \Big|^2 \, dx
&\;\leq\;
\int_{[-\frac{1}{2}, \frac{1}{2}] \backslash V_{\epsilon / 2^k}}
\Big| \sum_{j \in \Z} b^{(\epsilon/2^k,\ell)}_j \, e^{2 \pi i \ell \alpha j x} \Big|^2 \, dx \\
&\;\overset{\eqref{eqn:our-claim-Z-2Z-3Z}}{\leq} \; R \, \tfrac{\epsilon}{2^{k+\ell}}  \sum_{j \in \Z} \big| b^{(\epsilon/2^k,\ell)}_j \big|^2 .
\end{split}
\]
By fixing any $\ell \in \N$ and letting $k \rightarrow \infty$, we conclude that $E(\ell \alpha \Z)$ is not a Riesz sequence in $L^2(S)$.

To prove the claim \eqref{eqn:our-claim-Z-2Z-3Z}, fix any $0 < \eta < \frac{R-1}{R}$.
For each $\ell \in \N$, let $\widetilde{a}^{(\eta\alpha/2^\ell)} = \{ \widetilde{a}^{(\eta\alpha/2^\ell)}_j \}_{j \in \Z} \in \ell_2(\Z)$ be the sequence given by
\begin{equation}\label{eqn:def-widetilde-a}
\widetilde{a}^{(\eta\alpha/2^\ell)}_j
:=
\begin{cases}
\sqrt{\tfrac{\eta\alpha}{2^{\ell+1}}} & \text{if} \;\; j=0 , \\[.5em]
\sqrt{\tfrac{2^{\ell+1}}{\eta\alpha}} \tfrac{1}{\pi j} \sin \big( \tfrac{\pi j \eta\alpha}{2^{\ell+1}} \big) & \text{if} \;\; j \neq 0 ,
\end{cases}
\end{equation}
which is the Fourier coefficient of the $1$-periodic function
\begin{equation}\label{eqn:widetilde-p-epsilonL-time-limited}
\widetilde{p}_{\eta\alpha/2^\ell}(x)
:=
\begin{cases}
\sqrt{\tfrac{2^{\ell+1}}{\eta\alpha}} & \text{for} \;\; x \in \big[ {-} \tfrac{\eta\alpha}{4 \cdot 2^\ell} , \tfrac{\eta\alpha}{4 \cdot 2^\ell} \big] , \\[.5em]
0 & \text{for} \;\; x \in \big[ {-} \tfrac{1}{2} , \tfrac{1}{2} \big) \big\backslash \big[ {-} \tfrac{\eta\alpha}{4 \cdot 2^\ell} , \tfrac{\eta\alpha}{4 \cdot 2^\ell} \big] ,
\end{cases}
\end{equation}
that is,
$\widetilde{p}_{\eta\alpha/2^\ell}(x) = \sum_{j \in \Z} \widetilde{a}^{(\eta\alpha/2^\ell)}_j \, e^{2 \pi i j x}$ for a.e.~$x \in [-\frac{1}{2}, \frac{1}{2}]$.
Note that
$\| \widetilde{a}^{(\eta\alpha/2^\ell)} \|_{\ell_2}$
$= \| \widetilde{p}_{\eta\alpha/2^\ell}(x) \|_{L^2[-\frac{1}{2},\frac{1}{2}]} = 1$.
Choose a number $\widetilde{M} = \widetilde{M}(\eta\alpha/2^\ell) \in \N$ satisfying
\[
\sum_{|j| > \widetilde{M}} \big| \widetilde{a}^{(\eta\alpha/2^\ell)}_j \big|^2
\;<\; \tfrac{1}{\alpha} \cdot \big( \tfrac{\eta\alpha}{2^\ell} \big)
\;=\; \tfrac{\eta}{2^\ell},
\]
so that
\begin{equation}\label{eqn:a-epsilonL-truncated-part}
\sum_{j=-\widetilde{M}}^{\widetilde{M}} \big| \widetilde{a}^{(\eta\alpha/2^\ell)}_j \big|^2
\;>\; 1 - \tfrac{\eta}{2^\ell}
\;\geq\; 1 - \eta
\;>\; \tfrac{1}{R} .
\end{equation}
Now, the set $\Lambda_\ell$ comes into play.
We write $\Lambda_\ell = \ell \alpha \Z = \{ s_{\ell} (j) : j \in \Z \}$ with
\[
s_{\ell} (j) := \ell \alpha j
\quad \text{for all} \;\; j \in \Z .
\]
For $x \in [-\frac{1}{2}, \frac{1}{2}]$, we define
\begin{equation}\label{eqn:def-f-epsilon-ell-Z-2Z-3Z}
\widetilde{f}_{\eta\alpha/2^\ell,\Lambda_\ell} (x)
:= \sum_{j=-\widetilde{M}}^{\widetilde{M}} \widetilde{a}^{(\eta\alpha/2^\ell)}_j \, e^{2 \pi i s_{\ell} (j) x}
\end{equation}
and observe that
\begin{equation}\label{eqn:f-eps-ell-minus-p-eps-ellx-UpperBound-by-eps-Z-2Z-3Z}
\widetilde{f}_{\eta\alpha/2^\ell,\Lambda_\ell} (x) - \widetilde{p}_{\eta\alpha/2^\ell} (\ell \alpha x)
= - \sum_{|j| > \widetilde{M}}  \widetilde{a}^{(\eta\alpha/2^\ell)}_j \, e^{2 \pi i \ell \alpha j x} .
\end{equation}
Setting $V_\eta^{(\ell)} := [-\frac{1}{2} , \frac{1}{2}] \cap \supp \widetilde{p}_{\eta\alpha/2^\ell} (\ell \alpha x)$ for $\ell \in \N$, we obtain
\[
\begin{split}
&\int_{[-\frac{1}{2}, \frac{1}{2}] \backslash V_\eta^{(\ell)}}
\big| \widetilde{f}_{\eta\alpha/2^\ell,\Lambda_\ell} (x) \big|^2 \, dx \\
&\;\leq\;
\int_{-1/2}^{1/2}
\Big| \sum_{|j| > \widetilde{M}}  \widetilde{a}^{(\eta\alpha/2^\ell)}_j \, e^{2 \pi i \ell \alpha j x} \Big|^2 \, dx
=
\sum_{|j| > \widetilde{M}} \big| \widetilde{a}^{(\eta\alpha/2^\ell)}_j \big|^2 \\
&\;<\; \tfrac{\eta}{2^\ell}
\; \overset{\eqref{eqn:a-epsilonL-truncated-part}}{<} \;
R \, \tfrac{\eta}{2^\ell}
\sum_{j=-\widetilde{M}}^{\widetilde{M}} \big| \widetilde{a}^{(\eta\alpha/2^\ell)}_j \big|^2 .
\end{split}
\]
Note from \eqref{eqn:widetilde-p-epsilonL-time-limited} that
$\supp \widetilde{p}_{\eta\alpha/2^\ell} (\ell \alpha x)
= \frac{1}{\ell\alpha} \cup_{m \in \Z} \big(m + [-\frac{\eta\alpha}{4 \cdot 2^\ell}, \frac{\eta\alpha}{4 \cdot 2^\ell}] \big)
= \cup_{m \in \Z} \big(\frac{m}{\ell\alpha} + [-\frac{\eta}{4 \ell \cdot 2^\ell}, \frac{\eta}{4 \ell \cdot 2^\ell}] \big)$ which implies $| V_\eta^{(\ell)} | < \frac{\eta}{2^\ell}$.
Indeed, the set $[-\frac{1}{2} , \frac{1}{2}] \cap \cup_{m \in \Z} \big(m + [-\frac{\eta}{4 \cdot 2^\ell}, \frac{\eta}{4 \cdot 2^\ell}] \big) = [-\frac{\eta}{4 \cdot 2^\ell}, \frac{\eta}{4 \cdot 2^\ell}]$ is of length $\frac{\eta}{2^{\ell+1}}$, and the dilated set $\frac{1}{\ell} \cup_{m \in \Z} \big(m + [-\frac{\eta}{4 \cdot 2^\ell}, \frac{\eta}{4 \cdot 2^\ell}] \big) = \cup_{m \in \Z} \big(\frac{m}{\ell} + [-\frac{\eta}{4 \ell \cdot 2^\ell}, \frac{\eta}{4 \ell \cdot 2^\ell}] \big)$ restricted to $[-\frac{1}{2} , \frac{1}{2}]$ has Lebesgue measure $\frac{\eta}{2^{\ell+1}}$ as well, so the set $V_\eta^{(\ell)} = [-\frac{1}{2} , \frac{1}{2}] \cap \cup_{m \in \Z} \big(\frac{m}{\ell\alpha} + [-\frac{\eta}{4 \ell \cdot 2^\ell}, \frac{\eta}{4 \ell \cdot 2^\ell}] \big)$ with $0 < \alpha \leq 1$ has Lebesgue measure at most $\frac{\eta}{2^{\ell+1}}$ which is strictly less than $\frac{\eta}{2^\ell}$.
Finally, define $V_{\eta} := \cup_{\ell=1}^\infty V_\eta^{(\ell)}$ which clearly satisfies $|V_{\eta}| \leq \sum_{\ell=1}^\infty | V_\eta^{(\ell)} | < \sum_{\ell=1}^\infty \frac{\eta}{2^\ell} = \eta$.
Then for each $\ell \in \N$,
\[
\begin{split}
&\int_{[-\frac{1}{2}, \frac{1}{2}] \backslash V_{\eta}}
\Big| \sum_{j=-\widetilde{M}}^{\widetilde{M}} \widetilde{a}^{(\eta\alpha/2^\ell)}_j \, e^{2 \pi i s_{\ell} (j) x} \Big|^2 \, dx
=
\int_{[-\frac{1}{2}, \frac{1}{2}] \backslash V_{\eta}}
\big| \widetilde{f}_{\eta\alpha/2^\ell,\Lambda_\ell} (x) \big|^2 \, dx \\
&\leq
\int_{[-\frac{1}{2}, \frac{1}{2}] \backslash V_\eta^{(\ell)}}
\big| \widetilde{f}_{\eta\alpha/2^\ell,\Lambda_\ell} (x) \big|^2 \, dx
\;<\;
R \, \tfrac{\eta}{2^\ell}
\sum_{j=-\widetilde{M}}^{\widetilde{M}} \big| \widetilde{a}^{(\eta\alpha/2^\ell)}_j \big|^2
\end{split}
\]
which establishes the claim \eqref{eqn:our-claim-Z-2Z-3Z}.
This completes the proof.
\end{proof}

\begin{remark}[The construction of $S$ for $\Lambda_1 \,{=}\, \alpha \Z$, $\Lambda_2 \,{=}\, 2 \alpha \Z$, $\Lambda_3 \,{=}\, 3 \alpha \Z, \cdots$]
\label{rem:structure-of-S-Z-2Z-3Z}
\rm
In the proof above, the set $S$ is constructed as follows.
Given any $0 < \epsilon < 1$, choose an integer $R > \frac{1}{1-\epsilon}$ so that $0 < \epsilon < \frac{R-1}{R}$.
The set $S \subset [-\frac{1}{2}, \frac{1}{2}]$ is then given by $S := [-\frac{1}{2}, \frac{1}{2}] \backslash V$ with $V := \cup_{k=1}^\infty V_{\epsilon / 2^k}$, where
\begin{equation}\label{eqn:def-Veta-Vetaell-Z-2Z-3Z}
\begin{split}
& V_{\eta} \;:=\; \cup_{\ell=1}^\infty V_\eta^{(\ell)}
\qquad \text{and} \\
& V_\eta^{(\ell)} \;:=\; [-\tfrac{1}{2} , \tfrac{1}{2}] \cap \supp \widetilde{p}_{\eta\alpha/2^\ell} (\ell\alpha x) \\
& \qquad \,\, =\; [-\tfrac{1}{2} , \tfrac{1}{2}] \cap \tfrac{1}{\ell\alpha} \Big( \cup_{m \in \Z} \big(m + \big[ {-} \tfrac{\eta\alpha}{4 \cdot 2^\ell} , \tfrac{\eta\alpha}{4 \cdot 2^\ell} \big] \big) \Big) \\
& \qquad \,\, =\;  [-\tfrac{1}{2} , \tfrac{1}{2}] \cap \Big( \cup_{m \in \Z} \big(\tfrac{m}{\ell\alpha} + \big[ {-} \tfrac{\eta}{4 \ell \cdot 2^\ell} , \tfrac{\eta}{4 \ell \cdot 2^\ell} \big] \big) \Big) \\
& \,\, \text{for any} \;\; 0 < \eta < \tfrac{R-1}{R} \;\; \text{and} \;\; \ell \in \N .
\end{split}
\end{equation}
In short,
\[
\begin{split}
S :=\, & [-\tfrac{1}{2}, \tfrac{1}{2}] \backslash V
\quad \text{with}  \\
V :=\, & \cup_{k=1}^\infty V_{\epsilon / 2^k}
= \cup_{k=1}^\infty \cup_{\ell=1}^\infty V_{\epsilon/2^k}^{(\ell)} \\
=\, & [-\tfrac{1}{2} , \tfrac{1}{2}]
\cap \Big( \cup_{k=1}^\infty \cup_{\ell=1}^\infty \cup_{m \in \Z} \big(\tfrac{m}{\ell\alpha} + \big[ {-} \tfrac{\epsilon}{4 \ell \cdot 2^{k+\ell}} , \tfrac{\epsilon}{4 \ell \cdot 2^{k+\ell}} \big] \big) \Big) \\
=\, & [-\tfrac{1}{2} , \tfrac{1}{2}]
\cap \Big( \cup_{\ell=1}^\infty \cup_{m \in \Z} \big(\tfrac{m}{\ell\alpha} + \big[ {-} \tfrac{\epsilon}{4 \ell \cdot 2^{\ell+1}} , \tfrac{\epsilon}{4 \ell \cdot 2^{\ell+1}} \big] \big) \Big) ,
\end{split}
\]
where the set $V$ satisfies $|V| < \epsilon$ and thus $| S | > 1 - \epsilon$.
See Figure \ref{fig:SetV} for an illustration of the set $V$.
\end{remark}

We are now ready to prove Theorem \ref{thm:main-result-S-arbit-long-AP-fixed-common-diff}.
Note that the set $S \subset [-\frac{1}{2}, \frac{1}{2}]$ stated in Theorem \ref{thm:main-result-S-arbit-long-AP-fixed-common-diff} is exactly the resulting set of Proposition \ref{prop:seq-Z-2Z-3Z}, which is described in Remark \ref{rem:structure-of-S-Z-2Z-3Z}.

\begin{proof}[Proof of Theorem \ref{thm:main-result-S-arbit-long-AP-fixed-common-diff}]
Let $\Lambda \subset \R$ be a set containing
arbitrarily long arithmetic progressions with a fixed common difference $P \alpha$ for some $P \in \N$.
To prove that $E(\Lambda)$ is not a Riesz sequence in $L^2(S)$, it suffices to show that the set $V_{\eta} \subset [-\frac{1}{2}, \frac{1}{2}]$ given by \eqref{eqn:def-Veta-Vetaell-Z-2Z-3Z} for $0 < \eta < \tfrac{R-1}{R}$ and $\ell \in \N$ (with a fixed integer $R > \frac{1}{1-\epsilon}$), satisfies the following property:
there is a finitely supported sequence $b^{(\eta,\Lambda)} = \{ b^{(\eta,\Lambda)}_\lambda \}_{\lambda \in \Lambda}$ with
\begin{equation}\label{eqn:our-claim-rationally-periodic-Riesz-spectrum}
\int_{[-\frac{1}{2}, \frac{1}{2}] \backslash V_{\eta}}
\Big| \sum_{\lambda \in \Lambda} b^{(\eta,\Lambda)}_\lambda \, e^{2 \pi i \lambda x} \Big|^2 \, dx
\;\leq\;
R \, \tfrac{\eta}{2^P}
\sum_{\lambda \in \Lambda} \big| b^{(\eta,\Lambda)}_\lambda \big|^2  .
\end{equation}
Indeed, since
$S := [-\frac{1}{2}, \frac{1}{2}] \backslash \cup_{k=1}^\infty V_{\epsilon / 2^k}$ (see Remark \ref{rem:structure-of-S-Z-2Z-3Z}), it then holds for any $k \in \N$,
\[
\begin{split}
\int_S
\Big| \sum_{\lambda \in \Lambda} b^{(\epsilon / 2^k,\Lambda)}_\lambda \, e^{2 \pi i \lambda x} \Big|^2 \, dx
&\;\leq\;
\int_{[-\frac{1}{2}, \frac{1}{2}] \backslash V_{\epsilon / 2^k}}
\Big| \sum_{\lambda \in \Lambda} b^{(\epsilon / 2^k,\Lambda)}_\lambda \, e^{2 \pi i \lambda x} \Big|^2 \, dx \\
&\overset{\eqref{eqn:our-claim-rationally-periodic-Riesz-spectrum}}{\leq} \;
R \, \tfrac{\epsilon}{2^{k+P}}
\sum_{\lambda \in \Lambda} \big| b^{(\epsilon / 2^k,\Lambda)}_\lambda \big|^2
\end{split}
\]
which implies that $E(\Lambda)$ is not a Riesz sequence in $L^2(S)$.

To prove the claim \eqref{eqn:our-claim-rationally-periodic-Riesz-spectrum}, consider the sequence $\widetilde{a}^{(\eta\alpha/2^P)} = \{ \widetilde{a}^{(\eta\alpha/2^P)}_j \}_{j \in \Z} \in \ell_2(\Z)$, the function $\widetilde{p}_{\eta\alpha/2^P}$, and the number $\widetilde{M} = \widetilde{M}(\eta\alpha/2^P) \in \N$ taken respectively from \eqref{eqn:def-widetilde-a}-\eqref{eqn:a-epsilonL-truncated-part} with $\ell = P$.
By the assumption, the set $\Lambda \subset \R$ contains an arithmetic progression of length $2 \widetilde{M} {+} 1$ with common difference $P\alpha$, which can be expressed as
\[
s_{\Lambda} (j) := P \alpha j + d,
\qquad j = -\widetilde{M}, \ldots, \widetilde{M} ,
\]
for some $d \in \Z$.
Similarly as in \eqref{eqn:def-f-epsilon-ell-Z-2Z-3Z} and \eqref{eqn:f-eps-ell-minus-p-eps-ellx-UpperBound-by-eps-Z-2Z-3Z},
we define
\[
\widetilde{f}_{\Lambda} (x)
:= \sum_{j=-\widetilde{M}}^{\widetilde{M}} \widetilde{a}^{(\eta\alpha/2^P)}_j \, e^{2 \pi i s_{\Lambda} (j) x}
\quad
\text{for} \;\; x \in \R
\]
and observe that
\[
\widetilde{f}_{\Lambda} (x) - \widetilde{p}_{\eta\alpha/2^P} (P\alpha x) \, e^{2 \pi i d x}
=
- \sum_{|j| > \widetilde{M}}  \widetilde{a}^{(\eta\alpha/2^P)}_j \, e^{2 \pi i (P\alpha j + d) x}
\quad \text{for all} \;\; x \in \R .
\]
Recalling that $V_\eta^{(P)} := [-\frac{1}{2} , \frac{1}{2}] \cap \supp \widetilde{p}_{\eta\alpha/2^P} (P\alpha x)$ (see Remark \ref{rem:structure-of-S-Z-2Z-3Z}), we have
\[
\begin{split}
&\int_{[-\frac{1}{2}, \frac{1}{2}] \backslash V_\eta^{(P)}}
\big| \widetilde{f}_{\Lambda} (x) \big|^2 \, dx \\
&\leq
\int_{-1/2}^{1/2}
\Big| \sum_{|j| > \widetilde{M}}  \widetilde{a}^{(\eta\alpha/2^P)}_j \, e^{2 \pi i (P\alpha j + d) x} \Big|^2 \, dx
=
\sum_{|j| > \widetilde{M}} \big| \widetilde{a}^{(\eta\alpha/2^P)}_j \big|^2 \\
&< \tfrac{\eta}{2^P}
\;<\;
R \, \tfrac{\eta}{2^P}
\end{split}
\]
where the inequality \eqref{eqn:a-epsilonL-truncated-part} for $\ell = P$ is used in the last step.
Since $V_{\eta} := \cup_{\ell=1}^\infty V_\eta^{(\ell)}$, we have
\[
\begin{split}
\int_{[-\frac{1}{2}, \frac{1}{2}] \backslash V_{\eta}}
\Big| \sum_{j=-\widetilde{M}}^{\widetilde{M}} \widetilde{a}^{(\eta\alpha/2^P)}_j \, e^{2 \pi i s_{\Lambda} (j) x} \Big|^2 \, dx
&=
\int_{[-\frac{1}{2}, \frac{1}{2}] \backslash V_{\eta}}
\big| \widetilde{f}_{\Lambda} (x) \big|^2 \, dx \\
&\leq
\int_{[-\frac{1}{2}, \frac{1}{2}] \backslash V_\eta^{(P)}}
\big| \widetilde{f}_{\Lambda} (x) \big|^2 \, dx
\;<\;
R \, \tfrac{\eta}{2^P}
\end{split}
\]
which establishes the claim \eqref{eqn:our-claim-rationally-periodic-Riesz-spectrum}.
\end{proof}

\section{Proof of Theorem \ref{thm:Lambda1-Lambda2-Lambda3-introduction}}
\label{sec:proof-second-result}

We will now prove Theorem \ref{thm:Lambda1-Lambda2-Lambda3-introduction} which generalizes Proposition \ref{prop:seq-Z-2Z-3Z} from
$\Lambda_1 \,{=}\, \alpha \Z$, $\Lambda_2 \,{=}\, 2 \alpha \Z$, $\Lambda_3 \,{=}\, 3 \alpha \Z, \cdots$ to arbitrary separated sets $\Lambda_1, \Lambda_2, \ldots \subset \R$ with positive upper Beurling density.
The proof is similar to the proof of Proposition \ref{prop:seq-Z-2Z-3Z}, but
since an arbitrary separated set is in general non-periodic we need the additional step of extracting an \emph{approximate} arithmetic progression from each set $\Lambda_\ell$ with the help of Lemma \ref{lem:Lemma5-1-in-OU08}.

\begin{proof}[Proof of Theorem \ref{thm:Lambda1-Lambda2-Lambda3-introduction}]
Fix any $0 < \epsilon < 1$ and choose an integer $R > \frac{1}{1-\epsilon}$ so that $0 < \epsilon < \frac{R-1}{R}$.
We claim that for each $0 < \eta < \frac{R-1}{R}$ there exists a set $V_{\eta} \subset [-\frac{1}{2}, \frac{1}{2}]$ with $| V_{\eta} | < \eta$ satisfying the following property: for each $\ell \in \N$ there is a finitely supported sequence $b^{(\eta,\Lambda_\ell)} = \{ b^{(\eta,\Lambda_\ell)}_\lambda \}_{\lambda \in \Lambda_\ell}$ with
\begin{equation}\label{eqn:our-claim-Lambda1-Lambda2-Lambda3}
\int_{[-\frac{1}{2}, \frac{1}{2}] \backslash V_{\eta}}
\Big| \sum_{\lambda \in \Lambda_\ell} b^{(\eta,\Lambda_\ell)}_\lambda \, e^{2 \pi i \lambda x} \Big|^2 \, dx
\;\leq\; R \, \eta^2  \sum_{\lambda \in \Lambda_\ell} \big| b^{(\eta,\Lambda_\ell)}_\lambda \big|^2  .
\end{equation}

To prove the claim \eqref{eqn:our-claim-Lambda1-Lambda2-Lambda3}, fix any $0 < \eta < \frac{R-1}{R}$.
For each $\ell \in \N$,
let $a^{(\eta/2^\ell)} = \{ a^{(\eta/2^\ell)}_j \}_{j \in \Z}$ be an $\ell_1$-sequence with unit $\ell_2$-norm $\| a^{(\eta/2^\ell)} \|_{\ell_2} = 1$ such that
\begin{equation}\label{eqn:p-epsilonL-time-limited-OriginalEll1seqMethod}
p_{\eta/2^\ell}(x) := \sum_{j \in \Z} a^{(\eta/2^\ell)}_j \, e^{2 \pi i j x}
\quad \text{satisfies} \quad
p_{\eta/2^\ell}(x) = 0
\quad \text{for} \;\; \tfrac{\eta}{4 \cdot 2^\ell} \leq |x| \leq \tfrac{1}{2} .
\end{equation}
Since the sequence $a^{(\eta/2^\ell)} \in \ell_1(\Z)$ is not finitely supported, there is a number $M = M(\eta/2^\ell) \in \N$ with $0 < \sum_{|j| > M} |a^{(\eta/2^\ell)}_j| < \frac{\eta}{2^\ell}$. Note that since $|a^{(\eta/2^\ell)}_j| \leq \| a^{(\eta/2^\ell)} \|_{\ell_2} =1$ for all $j \in \Z$, we have
\[
\sum_{|j| > M} \big| a^{(\eta/2^\ell)}_j \big|^2
= \sum_{|j| > M} \big| a^{(\eta/2^\ell)}_j \big| < \tfrac{\eta}{2^\ell} ,
\]
so that
\begin{equation}\label{eqn:a-epsilonL-truncated-part-OriginalEll1seqMethod}
\sum_{j=-M}^M \big| a^{(\eta/2^\ell)}_j \big|^2
\;>\; 1 - \tfrac{\eta}{2^\ell}
\;\geq\; 1 - \eta
\;>\; \tfrac{1}{R} .
\end{equation}
We then choose a small parameter $0 < \delta = \delta (\eta/2^\ell) < 1$ satisfying
\[
\sin ( \pi \delta / 2)  <  \frac{\eta/2^\ell}{2 \sum_{j=-M}^M | a^{(\eta/2^\ell)}_j | } \; ,
\]
so that $\sum_{j=-M}^M | a^{(\eta/2^\ell)}_j | \cdot | e^{i \pi \delta} -  1 | = \sum_{j=-M}^M | a^{(\eta/2^\ell)}_j | \cdot 2 \sin ( \pi \delta / 2)  < \frac{\eta}{2^\ell}$.
Note that all the terms up to this point depend only on the parameters $\eta$ and $\ell$, in fact, only on the value $\eta/2^\ell$.

Now, the set $\Lambda_\ell$ comes into play.
Applying Lemma \ref{lem:Lemma5-1-in-OU08} to the set $\Lambda_\ell$ with the parameters $M$ and $\delta$ chosen above, we deduce that there exist constants $c = c(\eta/2^\ell,\Lambda_\ell) \in \N$ and $d = d(\eta/2^\ell,\Lambda_\ell) \in \R$, and an increasing sequence $s_{\eta/2^\ell,\Lambda_\ell} (-M) < s_{\eta/2^\ell,\Lambda_\ell} (-M+1) < \ldots < s_{\eta/2^\ell,\Lambda_\ell} (M)$ in $\Lambda_\ell$ satisfying
\[
\big| s_{\eta/2^\ell,\Lambda_\ell} (j) - c j - d \big| \leq \delta
\quad \text{for} \;\; j = -M, \ldots, M .
\]
For $x \in [-\frac{1}{2}, \frac{1}{2}]$, we define
\[
f_{\eta/2^\ell,\Lambda_\ell} (x)
:= \sum_{j=-M}^M a^{(\eta/2^\ell)}_j \, \exp \big( 2 \pi i s_{\eta/2^\ell,\Lambda_\ell} (j) x \big)
\]
and observe that
\[
\begin{split}
&\big| f_{\eta/2^\ell,\Lambda_\ell} (x) - p_{\eta/2^\ell} ( c x) \, e^{2 \pi i d x}  \big| \\
&\leq \Big| \sum_{j=-M}^M   a^{(\eta/2^\ell)}_j \Big( \exp \big( 2 \pi i s_{\eta/2^\ell,\Lambda_\ell} (j) x \big) -  e^{2 \pi i (cj+d) x} \Big) \Big|  + \Big| \sum_{|j| > M}  a^{(\eta/2^\ell)}_j \, e^{2 \pi i (cj+d) x} \Big|  \\
&\leq \sum_{j=-M}^M \big| a^{(\eta/2^\ell)}_j \big| \cdot \Big| \exp \Big( 2 \pi i \big( s_{\eta/2^\ell,\Lambda_\ell} (j) - cj-d \big) x \Big) -  1 \Big| + \sum_{|j| > M} \big| a^{(\eta/2^\ell)}_j \big|  \\
&< \tfrac{\eta}{2^\ell} + \tfrac{\eta}{2^\ell} = \tfrac{\eta}{2^{\ell-1}}
\;\leq\; \eta .
\end{split}
\]
Setting $V_\eta^{(\ell)} := [-\frac{1}{2} , \frac{1}{2}] \cap \supp p_{\eta/2^\ell} \big( c(\eta/2^\ell,\Lambda_\ell) x \big)$,
we have
\[
\int_{[-\frac{1}{2}, \frac{1}{2}] \backslash V_\eta^{(\ell)}}
\big| f_{\eta/2^\ell,\Lambda_\ell} (x) \big|^2 \, dx
\;\leq\; \eta^2
\;\overset{\eqref{eqn:a-epsilonL-truncated-part-OriginalEll1seqMethod}}{<}\;
R \, \eta^2
\sum_{j=-M}^M \big| a^{(\eta/2^\ell)}_j \big|^2 .
\]
Similarly as in the proof of Proposition \ref{prop:seq-Z-2Z-3Z},
we have $| V_\eta^{(\ell)} | < \frac{\eta}{2^\ell}$ and therefore
the set $V_{\eta} := \cup_{\ell=1}^\infty V_\eta^{(\ell)}$ satisfies $|V_{\eta}| < \eta$.
It then holds for each $\ell \in \N$,
\[
\begin{split}
& \int_{[-\frac{1}{2}, \frac{1}{2}] \backslash V_{\eta}}
\Big| \sum_{j=-M}^M a^{(\eta/2^\ell)}_j \, \exp \big( 2 \pi i s_{\eta/2^\ell,\Lambda_\ell} (j) x \big) \Big|^2 \, dx
\;=\;
\int_{[-\frac{1}{2}, \frac{1}{2}] \backslash V_{\eta}}
\big| f_{\eta/2^\ell,\Lambda_\ell} (x) \big|^2 \, dx \\
&\leq
\int_{[-\frac{1}{2}, \frac{1}{2}] \backslash V_\eta^{(\ell)}}
\big| f_{\eta/2^\ell,\Lambda_\ell} (x) \big|^2 \, dx
\;<\;
R \, \eta^2
\sum_{j=-M}^{M} \big| a^{(\eta/2^\ell)}_j \big|^2
\end{split}
\]
which proves the claim \eqref{eqn:our-claim-Lambda1-Lambda2-Lambda3}.

Finally, based on the established claim \eqref{eqn:our-claim-Lambda1-Lambda2-Lambda3} we define
$V := \cup_{k=1}^\infty V_{\epsilon / 2^k}$ and $S := [-\frac{1}{2}, \frac{1}{2}] \backslash V$.
Clearly, we have $|V| \leq \sum_{k=1}^\infty | V_{\epsilon / 2^k} | < \sum_{k=1}^\infty \frac{\epsilon}{2^k} = \epsilon$, so that $| S | > 1 - \epsilon$.
Also, it holds for any $k, \ell \in \N$,
\[
\begin{split}
\int_S  \Big| \sum_{\lambda \in \Lambda_\ell} b^{(\epsilon/2^k,\Lambda_\ell)}_\lambda \, e^{2 \pi i \lambda x}
\Big|^2 \, dx
&\;\leq\;
\int_{[-\frac{1}{2}, \frac{1}{2}] \backslash V_{\epsilon / 2^k}}
\Big| \sum_{\lambda \in \Lambda_\ell} b^{(\epsilon/2^k,\Lambda_\ell)}_\lambda \, e^{2 \pi i \lambda x} \Big|^2 \, dx \\
&\overset{\eqref{eqn:our-claim-Lambda1-Lambda2-Lambda3}}{\leq} \; R \, \big( \tfrac{\epsilon}{2^k} \big)^2
\sum_{\lambda \in \Lambda_\ell} \big| b^{(\epsilon/2^k,\Lambda_\ell)}_\lambda \big|^2 .
\end{split}
\]
By fixing any $\ell \in \N$ and letting $k \rightarrow \infty$, we conclude that $E(\Lambda_{\ell})$ is not a Riesz sequence in $L^2(S)$.
\end{proof}

\begin{remark}[The construction of $S$ for arbitrary separated sets $\Lambda_1, \Lambda_2, \ldots \subset \R$]
\label{rem:structure-of-S-Lambda1-Lambda2-Lambda3}
\rm
In the proof above, the set $S$ for arbitrary separated sets $\Lambda_1, \Lambda_2, \ldots \subset \R$ is constructed as follows.
Given any $0 < \epsilon < 1$, choose an integer $R > \frac{1}{1-\epsilon}$ so that $0 < \epsilon < \frac{R-1}{R}$.
The set $S \subset [-\frac{1}{2}, \frac{1}{2}]$ is then given by $S := [-\frac{1}{2}, \frac{1}{2}] \backslash V$ with $V := \cup_{k=1}^\infty V_{\epsilon / 2^k}$, where
\begin{equation*}
\begin{split}
& V_{\eta} \;:=\; \cup_{\ell=1}^\infty V_\eta^{(\ell)}
\qquad \text{and} \\
& V_\eta^{(\ell)} \;:=\; [-\tfrac{1}{2} , \tfrac{1}{2}] \cap \supp p_{\eta/2^\ell} \big( c(\eta/2^\ell,\Lambda_\ell) x \big)
\;=\; [-\tfrac{1}{2} , \tfrac{1}{2}] \cap \Big( \tfrac{1}{c(\eta/2^\ell,\Lambda_\ell)} \, \supp p_{\eta/2^\ell} \Big) \\
& \qquad \,\, \subset\;  [-\tfrac{1}{2} , \tfrac{1}{2}] \cap \Big( \cup_{m \in \Z} \big(\tfrac{m}{c(\eta/2^\ell,\Lambda_\ell)} + \big[ {-} \tfrac{\eta}{4 \cdot c(\eta/2^\ell,\Lambda_\ell) \cdot 2^\ell} , \tfrac{\eta}{4 \cdot c(\eta/2^\ell,\Lambda_\ell) \cdot 2^\ell} \big] \big) \Big) \\
& \,\, \text{for any} \;\; 0 < \eta < \tfrac{R-1}{R} \;\; \text{and} \;\; \ell \in \N .
\end{split}
\end{equation*}
Here, $c(\eta/2^\ell,\Lambda_\ell)$ is a positive integer depending on the value $\eta/2^\ell$ and the set $\Lambda_\ell$. In short,
\[
\begin{split}
S :=\, & [-\tfrac{1}{2}, \tfrac{1}{2}] \backslash V
\quad \text{with}  \\
V :=\, & \cup_{k=1}^\infty V_{\epsilon / 2^k}
\;=\; \cup_{k=1}^\infty \cup_{\ell=1}^\infty V_{\epsilon/2^k}^{(\ell)} \\
=\, & [-\tfrac{1}{2} , \tfrac{1}{2}] \cap \Big( \cup_{k=1}^\infty \cup_{\ell=1}^\infty \, \tfrac{1}{c(\epsilon/2^{k+\ell},\Lambda_\ell)} \, \supp p_{\epsilon/2^{k+\ell}} \Big) \\
\subset\, & [-\tfrac{1}{2} , \tfrac{1}{2}] \cap \\
& \Big( \cup_{k=1}^\infty \cup_{\ell=1}^\infty \cup_{m \in \Z}
\big(\tfrac{m}{c(\epsilon/2^{k+\ell},\Lambda_\ell)} + \big[ {-} \tfrac{\epsilon}{4 \cdot c(\epsilon/2^{k+\ell},\Lambda_\ell) \cdot 2^{k+\ell}} , \tfrac{\epsilon}{4 \cdot c(\epsilon/2^{k+\ell},\Lambda_\ell) \cdot 2^{k+\ell}} \big] \big) \Big) .
\end{split}
\]
Recall that we were able to eliminate the union $\cup_{k=1}^\infty$ in the expression of $V$ in Remark \ref{rem:structure-of-S-Z-2Z-3Z},
because for any fixed $\ell \in \N$ the sets $\big[ {-} \tfrac{\epsilon}{4 \ell \cdot 2^{k+\ell}} , \tfrac{\epsilon}{4 \ell \cdot 2^{k+\ell}} \big]$, $k=1,2, \ldots$ are decreasingly nested.
Unfortunately, the trick cannot be applied here even if $\supp p_{\epsilon/2^{k+\ell}} \cap [-\frac{1}{2} , \frac{1}{2}] = \big[ {-} \tfrac{\epsilon}{4 \cdot 2^{k+\ell}} , \tfrac{\epsilon}{4 \cdot 2^{k+\ell}} \big]$ for all $k, \ell \in \N$ and the numbers $c(\epsilon/2^{k+\ell},\Lambda_\ell)$, $k =1,2,\ldots$ increase by factors of positive integers (exploiting the `moreover' part of  Lemma \ref{lem:Lemma5-1-in-OU08}) for $\ell \in \N$ fixed, in which case the sets $\big[ {-} \tfrac{\epsilon}{4 \cdot c(\epsilon/2^{k+\ell},\Lambda_\ell) \cdot 2^{k+\ell}} , \tfrac{\epsilon}{4 \cdot c(\epsilon/2^{k+\ell},\Lambda_\ell) \cdot 2^{k+\ell}} \big]$, $k =1,2,\ldots$  are decreasingly nested for $\ell \in \N$ fixed.
This is because the period $\frac{1}{c(\epsilon/2^{k+\ell},\Lambda_\ell)}$ of the periodization involved with the union $\cup_{m \in \Z}$, depends also on $k$.
\end{remark}

\section{Remarks}
\label{sec:Conclusion}

Let us discuss some obstacles in extending our results (Theorems \ref{thm:main-result-S-arbit-long-AP-fixed-common-diff} and \ref{thm:Lambda1-Lambda2-Lambda3-introduction}) to the class of \emph{arbitrary} separated sets $\Lambda \subset \R$ with positive upper Beurling density.
Our result relies on the proof technique of Olevskii and Ulanovskii \cite{OU08} which is based on the celebrated Szemer\'{e}di's theorem \cite{Sz75} stating that
\[
  \parbox{\dimexpr\linewidth-4em}{%
    \strut
    \it
    any integer set $\Omega \subset \Z$ with positive upper Beurling density $D^+ (\Omega) > 0$ contains at least one arithmetic progression of length $M$ for all $M \in \N$.
    \strut
  }
\]
If it were even true that for any integer set $\Omega \subset \Z$ with $D^+ (\Omega) > 0$,
\begin{equation}\tag{$\ast$}\label{eq:ast-wrong-number-theory-statement}
  \parbox{\dimexpr\linewidth-5em}{%
    \strut
    \it
    there exists a number $P \in \N$ such that $\Omega$ contains arbitrarily long arithmetic progressions with common difference $P$,
    \strut
  }
\end{equation}
then Theorem \ref{thm:main-result-S-arbit-long-AP-fixed-common-diff} would imply a stronger result:
\begin{equation}\label{eq:wrong-strengthen}
  \parbox{\dimexpr\linewidth-5em}{%
    \strut
    \it
    Let $S \subset [-\frac{1}{2}, \frac{1}{2}]$ be the set given by \eqref{eqn:first-thm-def-set-S} with $\alpha = 1$ and any $0 < \epsilon < 1$.
    If $\Lambda \subset \R$ is a separated set with $D^+ (\Lambda) > 0$, then $E(\Lambda)$ is not a Riesz sequence in $L^2(S)$.
    \strut
  }
\end{equation}
To see this, suppose to the contrary that $\Lambda = \{ \lambda_n \}_{n \in \Z} \subset \R$ is a separated set with $D^+ (\Lambda) > 0$ such that $E(\Lambda)$ is a Riesz sequence in $L^2(S)$.
Then according to Theorem \ref{thm:PaleyWienerStability} there is a constant $\theta = \theta(\Lambda,S) > 0$ such that
$E(\Lambda')$ is a Riesz sequence in $L^2(S)$ whenever $\Lambda' = \{ \lambda_n' \}_{n \in \Z} \subset \R$ satisfies
$| \lambda_n' - \lambda_n | \leq \theta$ for all $n \in \Z$.
This allows for a replacement of the set $\Lambda \subset \R$ with its perturbation $\Lambda' \subset \frac{1}{N} \Z$ for some large $N \in \N$.
Certainly, the set $N \Lambda' \subset \Z$ satisfies $D^+ (N \Lambda') > 0$, and thus \eqref{eq:ast-wrong-number-theory-statement} would imply that there is a number $P \in \N$ such that $N \Lambda'$ contains arbitrarily long arithmetic progressions with common difference $P$.
In turn, the set $\Lambda'$ would also contain arbitrarily long arithmetic progressions with common difference $P$,
and therefore $E(\Lambda')$ would not be a Riesz sequence in $L^2(S)$ by Theorem \ref{thm:main-result-S-arbit-long-AP-fixed-common-diff},
yielding a contradiction.

Unfortunately, as shown in the following example, there exist some sets $\Omega \subset \Z$ with $D^+ (\Omega) > 0$ which do not satisfy \eqref{eq:ast-wrong-number-theory-statement}.
Hence, the improvement of Theorem \ref{thm:main-result-S-arbit-long-AP-fixed-common-diff} to \eqref{eq:wrong-strengthen} does not work as we wished.

\begin{example}
\rm
As mentioned in \cite[Section 1]{CM20}, very little is known about the integer sets that satisfy \eqref{eq:ast-wrong-number-theory-statement}, i.e., the integer sets containing arbitrarily long arithmetic progressions with fixed common difference.
Motivated by a discussion in \cite{CM20},
we will now provide a set $\Omega \subset \Z$ with positive uniform Beurling density which does not satisfy \eqref{eq:ast-wrong-number-theory-statement}.

Consider the set $\Omega := \pm \Omega_0 = \Omega_0 \cup (-\Omega_0)$ where
$\Omega_0 := \{ 1, 2, 3, 5, 6, 7, 10, 11, 13$, $14, 15, 17, \ldots \}$
is the set of square-free integers, i.e., the integers that are not divisible by $n^2$ for $n \in \N$ prime.
It is well-known (see e.g., \cite{Wa63}) that
$\lim_{r \rightarrow \infty} | \Omega_0 \cap \{ 1, 2, \ldots, r \} | / r
=
\frac{6}{\pi^2}
\approx 0.6079$, which implies $D (\Omega) = \frac{6}{\pi^2}$.
Note that if $\Omega$ contains a long arithmetic progression, then either $-\Omega_0$ or $\Omega_0$ contains at least half portion of that arithmetic progression.
By symmetry, this implies that if \eqref{eq:ast-wrong-number-theory-statement} holds for $\Omega$, then it holds also for $\Omega_0$.
Thus, to prove that \eqref{eq:ast-wrong-number-theory-statement} does not hold for $\Omega$, it will be enough to show that \eqref{eq:ast-wrong-number-theory-statement} does not hold for $\Omega_0$.

Suppose to the contrary that \eqref{eq:ast-wrong-number-theory-statement} holds for $\Omega_0$. Then
there is a number $P \in \N$ such that $\Omega_0$ contains an arithmetic progression of length $Q^2$ with common difference $P$, where $Q \in \N$ is any prime number greater than $P$,
that is,
\[
d, \; d{+}P , \; d{+}2P, \; \ldots, \; d{+}(Q^2 {-} 1)P
\;\; \in \; \Omega_0
\quad \text{for some} \;\; d \in \N .
\]
Since $Q$ is prime and $P < Q$, we have $\gcd (P, Q^2) = 1$ which implies that all the numbers $d + jP$ for $j=0,\ldots,Q^2-1$ have distinct residues modulo $Q^2$.
In particular, there is a number $d + j P \in \Omega_0$ which is divisible by $Q^2$, contradicting with the choice of $\Omega_0$.
Hence, the property \eqref{eq:ast-wrong-number-theory-statement} does not hold for $\Omega_0$ and thus neither for $\Omega$.
\end{example}

\begin{remark}
\label{rem:PowerOf4}
\rm
It is possible to slightly improve the set $S \subset [-\frac{1}{2}, \frac{1}{2}]$ appearing in the statement of Theorem \ref{thm:main-result-S-arbit-long-AP-fixed-common-diff} (see Remark \ref{rem:structure-of-S-Z-2Z-3Z} for the construction of $S$).
Instead of the set $S$ in Theorem \ref{thm:main-result-S-arbit-long-AP-fixed-common-diff},
consider the set
$S := [-\frac{1}{2}, \frac{1}{2}] \backslash V$ with $V := \cup_{k=1}^\infty V_{\epsilon / 2^k}$, where
\begin{equation}\label{eqn:def-Veta-Vetaell-Z-2Z-3Z-improvement}
\begin{split}
& V_{\eta} \;:=\; \cup_{\ell=1}^\infty V_\eta^{(\ell)}
\qquad \text{and} \\
& V_\eta^{(\ell)} \;:=\;
[-\tfrac{1}{2} , \tfrac{1}{2}] \cap \Big( \textstyle \bigcup_{r=1}^{2^\ell} \supp \widetilde{p}_{\eta\alpha/4^\ell} \big( c^{(\ell)}_r \alpha x \big) \Big) \\
& \qquad \,\, =\;  [-\tfrac{1}{2} , \tfrac{1}{2}] \cap \textstyle \bigcup_{r=1}^{2^\ell} \Big( \tfrac{1}{c^{(\ell)}_r \alpha} \textstyle \bigcup_{m \in \Z} \big(m + \big[ {-} \tfrac{\eta\alpha}{4^{\ell+1}} , \tfrac{\eta\alpha}{4^{\ell+1}} \big] \big) \Big) \\
& \qquad \,\, =\;  [-\tfrac{1}{2} , \tfrac{1}{2}] \cap \textstyle \bigcup_{r=1}^{2^\ell} \Big( \textstyle \bigcup_{m \in \Z} \big(\tfrac{m}{c^{(\ell)}_r \alpha} + \big[ {-} \tfrac{\eta}{c^{(\ell)}_r \cdot 4^{\ell+1}} , \tfrac{\eta}{ c^{(\ell)}_r \cdot 4^{\ell+1}} \big] \big) \Big) \\
& \,\, \text{for any} \;\; 0 < \eta < \tfrac{R-1}{R} \;\; \text{and} \;\; \ell \in \N .
\end{split}
\end{equation}
Here, $c^{(\ell)} = \{ c^{(\ell)}_r \}_{r=1}^{2^\ell} \in \N^{2^\ell}$ is any $\N$-valued vector of size $2^\ell$ (for instance, in the light of the set $\mathcal{C}$ discussed before Lemma \ref{lem:Lemma5-1-in-OU08}, one may choose $c^{(\ell)}_r = r$ or $c^{(\ell)}_r = r^{1000}$ for $r = 1, 2, 3, \ldots, 2^\ell$), and
$\widetilde{p}_{\eta\alpha/4^\ell}$ is the $1$-periodic function given by
\[
\widetilde{p}_{\eta\alpha/4^\ell}(x)
=
\begin{cases}
\sqrt{\tfrac{2 \cdot 4^{\ell}}{\eta\alpha}} & \text{for} \;\; x \in \big[ {-} \tfrac{\eta\alpha}{4^{\ell+1}} , \tfrac{\eta\alpha}{4^{\ell+1}} \big] , \\[.5em]
0 & \text{for} \;\; x \in \big[ {-} \tfrac{1}{2} , \tfrac{1}{2} \big) \big\backslash \big[ {-} \tfrac{\eta\alpha}{4^{\ell+1}} , \tfrac{\eta\alpha}{4^{\ell+1}} \big] ,
\end{cases}
\]
which is consistent with the notation of $\widetilde{p}_{\eta\alpha/2^\ell}(x)$ in \eqref{eqn:widetilde-p-epsilonL-time-limited}.
In short,
\[
\begin{split}
& S := [-\tfrac{1}{2}, \tfrac{1}{2}] \backslash V
\quad \text{with} \\
& V :=\, \cup_{k=1}^\infty V_{\epsilon / 2^k}
= \cup_{k=1}^\infty \cup_{\ell=1}^\infty V_{\epsilon/2^k}^{(\ell)} \\
& \;\;\;\,=\, [-\tfrac{1}{2} , \tfrac{1}{2}] \cap \Big( \textstyle \bigcup_{k=1}^\infty \textstyle \bigcup_{\ell=1}^\infty \textstyle \bigcup_{r=1}^{2^\ell}
\textstyle \bigcup_{m \in \Z} \big(\tfrac{m}{c^{(\ell)}_r \alpha} + \big[ {-} \tfrac{\epsilon}{c^{(\ell)}_r \cdot 2^k \cdot 4^{\ell+1}} , \tfrac{\epsilon}{ c^{(\ell)}_r \cdot 2^k \cdot 4^{\ell+1}} \big] \big) \Big)  \\
& \;\;\;\,=\, [-\tfrac{1}{2} , \tfrac{1}{2}] \cap \Big( \textstyle \bigcup_{\ell=1}^\infty \textstyle \bigcup_{r=1}^{2^\ell}
\textstyle \bigcup_{m \in \Z} \big(\tfrac{m}{c^{(\ell)}_r \alpha} + \big[ {-} \tfrac{\epsilon}{c^{(\ell)}_r \cdot 2 \cdot 4^{\ell+1}} , \tfrac{\epsilon}{ c^{(\ell)}_r \cdot 2 \cdot 4^{\ell+1}} \big] \big) \Big)  .
\end{split}
\]
Note that for each $\ell \in \N$, we have $\big| [-\frac{1}{2} , \frac{1}{2}] \cap \supp \widetilde{p}_{\eta\alpha/4^\ell} \big( c^{(\ell)}_r \alpha x \big) \big| < \frac{\eta}{4^\ell}$ for all $r = 1, 2, 3, \ldots, 2^\ell$, regardless of the choice of $c^{(\ell)} = \{ c^{(\ell)}_r \}_{r=1}^{2^\ell} \in \N^{2^\ell}$,
which then implies
$| V_\eta^{(\ell)} | < 2^\ell \,{\cdot}\, \tfrac{\eta}{4^\ell} = \tfrac{\eta}{2^\ell}$.
In turn, we have $|V_{\eta}| \leq \sum_{\ell=1}^\infty | V_\eta^{(\ell)} | < \sum_{\ell=1}^\infty \frac{\eta}{2^\ell} = \eta$ and consequently,
$|V| \leq \sum_{k=1}^\infty | V_{\epsilon / 2^k} | < \sum_{k=1}^\infty \frac{\epsilon}{2^k} = \epsilon$ and $| S | > 1 - \epsilon$.

An inspection of the proof of Proposition \ref{prop:seq-Z-2Z-3Z} and Theorem \ref{thm:main-result-S-arbit-long-AP-fixed-common-diff} shows that
the original set $V_\eta^{(\ell)} := [-\tfrac{1}{2} , \tfrac{1}{2}] \cap \supp \widetilde{p}_{\eta\alpha/2^\ell} (\ell\alpha x)$ defined in \eqref{eqn:def-Veta-Vetaell-Z-2Z-3Z}
can accommodate all the arithmetic progressions with common difference `$\ell\alpha$' through the function $\widetilde{p}_{\eta\alpha/2^\ell} (\ell\alpha x)$ whose dilation factor is `$\ell\alpha$'.
Defining $V_\eta^{(\ell)}$ as in \eqref{eqn:def-Veta-Vetaell-Z-2Z-3Z-improvement}, on the other hand,
allows for a multiple choice of common difference parameter `$P \alpha$' with $P \in \{ c^{(\ell)}_r : r = 1, 2, 3, \ldots, 2^\ell \}$,
which is to be used as the dilation factor associated with the function $\widetilde{p}_{\eta\alpha/4^\ell}$.
Accordingly, the new set $V_\eta^{(\ell)}$ can accommodate all the arithmetic progressions with common difference `$c^{(\ell)}_r \alpha$' for $r = 1, 2, 3, \ldots, 2^\ell$, through the function $\widetilde{p}_{\eta\alpha/4^\ell} (c^{(\ell)}_r \alpha x)$.

However, such flexibility is yet too weak for generalizing Theorem \ref{thm:main-result-S-arbit-long-AP-fixed-common-diff} to the class of \emph{arbitrary} separated sets $\Lambda \subset \R$ with positive upper Beurling density.
Indeed, to adapt the proof technique of Theorem \ref{thm:main-result-S-arbit-long-AP-fixed-common-diff} to an arbitrary separated set $\Lambda \subset \R$,
one needs to extract from $\Lambda$ an arithmetic progression (resp.~an approximate arithmetic progression in the sense of \eqref{eqn:Lemma5-1-in-OU08-condition} in Lemma \ref{lem:Lemma5-1-in-OU08}) of length $2 \widetilde{M} {+} 1$ with common difference $P \alpha$ for some $P \in \{ c^{(\ell)}_r : r = 1, 2, 3, \ldots, 2^\ell \}$, where $\widetilde{M} = \widetilde{M}(\eta/4^\ell) \in \N$ is a large number chosen similarly as in \eqref{eqn:a-epsilonL-truncated-part}.
However, setting $\alpha = 1$ for simplicity, we note that Szemer\'{e}di's theorem (resp.~Lemma \ref{lem:Lemma5-1-in-OU08}) only guarantees the existence of an arithmetic progression (resp.~an approximate arithmetic progression) of length $2 \widetilde{M} {+} 1$
in $\Lambda$, where the common difference $P \in \N$ of the progression can be \emph{arbitrarily large}.
While the flexibility in choosing the set $\{ c^{(\ell)}_r : r = 1, 2, 3, \ldots, 2^\ell \}$ is certainly advantageous, there is no guarantee that the parameter $P$ will be in this set.
Hence, even for the improved set $S \subset [-\frac{1}{2}, \frac{1}{2}]$ given by \eqref{eqn:def-Veta-Vetaell-Z-2Z-3Z-improvement}, the general case of arbitrary separated sets $\Lambda \subset \R$ is still out of reach.

Note that the issue of $P \in \N$ being potentially very large is easily avoided when $\Lambda$ is assumed to have arbitrarily long arithmetic progressions with a fixed common difference $P$ (with $\alpha = 1$ chosen for simplicity), which has led to our first main result Theorem \ref{thm:main-result-S-arbit-long-AP-fixed-common-diff}.
\end{remark}

\appendix

\renewcommand{\thetheorem}{A.\arabic{theorem}}
\setcounter{theorem}{0}


\section{Related notions in Paley-Wiener spaces}
\label{sec:PW-language}


The Fourier transform\footnote{This is a nonstandard but equivalent definition of the Fourier transform which has no negative sign in the exponent.
This definition is employed only to justify the relation \eqref{eqn:relation-between-PW-and-L2}.
Alternatively, as in \cite{OU06,OU08} one could use the standard definition of the Fourier transform which has negative sign in the exponent, and define the Paley-Wiener space $PW(S)$ to be the space of Fourier transforms of $L^2(S)$.} is defined densely on $L^2(\R^d)$ by
\[
\mathcal{F} (f) := \widehat{f} (\omega) = \int f(x) \, e^{2 \pi i x \cdot \omega} \, dx
\quad \text{for} \;\; f \in L^1(\R^d) \cap L^2(\R^d) .
\]
It is easily seen that $\mathcal{F} : L^2(\R^d) \rightarrow L^2(\R^d)$ is a unitary operator satisfying $\mathcal{F}^2 = \mathcal{I}$, where $\mathcal{I} : L^2(\R^d) \rightarrow L^2(\R^d)$ is the reflection operator defined by $\mathcal{I} f (x) = f(-x)$, and thus $\mathcal{F}^4 = \mathrm{Id}_{L^2(\R^d)}$.
The \emph{Paley-Wiener space} over a measurable set $S \subset \R^d$ is defined by
\[
PW(S) := \{ f \in L^2(\R^d) : \supp \widehat{f} \subset S \}
\;=\; \mathcal{F}^{-1} \big[ L^2(S) \big]
\]
equipped with the norm $\| f \|_{PW(S)} := \| f \|_{L^2(\R^d)} = \| \widehat{f} \|_{L^2(S)}$, where $L^2(S)$ is embedded into $L^2(\R^d)$ by the trivial extension.
Denoting the Fourier transform of $f \in PW(S)$ by $F \in L^2(S)$, we see that
for almost all $x \in \R^d$,
\begin{equation}\label{eqn:relation-between-PW-and-L2}
f(x) = \big( \mathcal{F}^{-1} F \big) (x) = \int_S F(\omega) \, e^{- 2 \pi i x \cdot \omega} \, d\omega
= \big\langle F, e^{2 \pi i x \cdot (\cdot)} \big\rangle_{L^2(S)} .
\end{equation}
Moreover, if the set $S \subset \R^d$ has finite measure, then $f$ is continuous and thus \eqref{eqn:relation-between-PW-and-L2} holds for all $x \in \R^d$.

\begin{definition}\rm
Let $S \subset \R^d$ be a measurable set.
A discrete set $\Lambda \subset \R^d$ is called
\begin{itemize}
\item
\emph{a uniqueness set (a set of uniqueness)} for $PW(S)$ if the only function $f \in  PW(S)$ satisfying $f(\lambda) = 0$ for all $\lambda \in \Lambda$ is the trivial function $f=0$;

\item
\emph{a sampling set (a set of sampling)} for $PW(S)$
if there are constants $0 < A \leq B < \infty$ such that
\[
A \, \| f \|_{PW(S)}^2
\;\leq\;
\sum_{\lambda \in \Lambda} \big| f(\lambda) \big|^2
\;\leq\;
B \, \| f \|_{PW(S)}^2
\quad \text{for all} \;\; f \in PW(S) ;
\]

\item
\emph{an interpolating set (a set of interpolation)} for $PW(S)$ if for each $c = \{ c_ \lambda \}_{\lambda \in \Lambda} \in \ell_2(\Lambda)$ there exists a function $f \in  PW(S)$ satisfying $f(\lambda) = c_\lambda$ for all $\lambda \in \Lambda$.
\end{itemize}
\end{definition}

\medskip

It follows immediately from \eqref{eqn:relation-between-PW-and-L2} that
\begin{itemize}
\item
$\Lambda$ is a uniqueness set for $PW(S)$ if and only if $E(\Lambda)$ is complete in $L^2(S)$;

\item
$\Lambda$ is a sampling set for $PW(S)$ if and only if $E(\Lambda)$ is a frame for $L^2(S)$.
\end{itemize}
Also, we have the following characterization of interpolation sets for $PW(S)$ (see \cite[p.129, Theorem 3]{Yo01}):
\begin{itemize}
\item
$\Lambda$ is an interpolating set for $PW(S)$ if and only if there is a constant $A > 0$ such that
\begin{equation*}
A \, \| c \|_{\ell_2}^2
\;\leq\;
\Big\| \sum_{n\in\Z} c_ \lambda \, e^{2 \pi i \lambda (\cdot)} \Big\|_{L^2(S)}^2
\quad \text{for all} \;\; c = \{ c_ \lambda \}_{\lambda \in \Lambda} \in \ell_2(\Lambda) ,
\end{equation*}
meaning that the lower Riesz inequality of $E(\Lambda)$ for $L^2(S)$ holds.
\end{itemize}
Combining with the Bessel inequality (which corresponds to the upper Riesz inequality), we obtain a more convenient statement:
\begin{itemize}
\item
If $E(\Lambda)$ is a Bessel sequence in $L^2(S)$, then $\Lambda$ is an interpolating set for $PW(S)$ if and only if $E(\Lambda)$ is a Riesz sequence in $L^2(S)$.
\end{itemize}
In fact, this statement can be proved by elementary functional analytic arguments.
Indeed, if $E(\Lambda)$ is Bessel, i.e., if the synthesis operator $T : \ell_2 (\Lambda) \rightarrow L^2(S)$ defined by $T (\{ c_ \lambda \}_{\lambda \in \Lambda}) = \sum_{\lambda \in \Lambda} c_\lambda \, e^{2 \pi i \lambda (\cdot)}$
is a bounded linear operator
(equivalently, the analysis operator $T^* : L^2(S) \rightarrow \ell_2 (\Lambda)$ defined by $T^* F = \{ \langle F, e^{2 \pi i \lambda (\cdot)} \rangle_{L^2(S)} \}_{\lambda \in \Lambda}$ is a bounded linear operator),
then $T$ is bounded below (that is, the lower Riesz inequality holds) if and only if $T$ is injective and has closed range, if and only if $T^*$ has dense and closed range, i.e., $T^*$ is surjective, which means that $E(\Lambda)$ is an interpolating set for $PW(S)$ by \eqref{eqn:relation-between-PW-and-L2}.

The statement above is often useful because $E(\Lambda)$ is necessarily a Bessel sequence in $L^2(S)$ whenever $\Lambda \subset \R^d$ is separated and $S \subset \R^d$ is bounded \cite[p.135, Theorem 4]{Yo01}. Note that $\Lambda \subset \R^d$ is necessarily separated if $E(\Lambda)$ is a Riesz sequence in $L^2(S)$ (see Proposition \ref{prop:FR-rel-sep-RS-sep}).

\section{Proof of some auxiliary results}
\label{sec:proof-of-preliminaries}

\begin{proof}[\textbf{Proof of Lemma \ref{lem:RB-basic-operations}}]
To prove (a), note that for any $a \in \R^d$,
\[
T_{-a} [ E(\Lambda) ] = \{ e^{2\pi i \lambda (\cdot+a)} : \lambda \in \Lambda \}
= \{ e^{2\pi i \lambda a} \, e^{2\pi i \lambda (\cdot)} : \lambda \in \Lambda \} .
\]
Since the phase factor $e^{2\pi i \lambda a} \in \C$ for $\lambda \in \Lambda$ does not affect the Riesz basis property and Riesz bounds, it follows that $T_{-a} [ E(\Lambda) ]$ is a Riesz basis for $L^2(S)$ with optimal bounds $A$ and $B$.
Consequently, $E(\Lambda)$ is a Riesz basis for $L^2(S + a)$ with bounds $A$ and $B$. \\
For (b) and (c), note that the modulation $F(x) \mapsto e^{2 \pi i b x} F(x)$ is a unitary operator on $L^2(S)$ and that the dilation $F(x) \mapsto \sqrt{\sigma} F(\sigma x)$ is also a unitary operator from $L^2(S)$ onto $L^2(\frac{1}{\sigma} S)$.
It is easily seen from Proposition \ref{prop:facts-RieszBasesFR}(c) that
if $U : \mathcal{H}_1 \rightarrow \mathcal{H}_2$ is a unitary operators between two Hilbert spaces $\mathcal{H}_1$ and $\mathcal{H}_2$, and if $\{ f_n \}_{n \in \Z}$ is a Riesz basis for $\mathcal{H}_1$, then $\{ U f_n \}_{n \in \Z}$ is a Riesz basis for $\mathcal{H}_2$.
The parts (b) and (c) follow immediately from this statement.
\end{proof}

\begin{proof}[\textbf{Proof of Proposition \ref{prop:FR-rel-sep-RS-sep}}]
For simplicity, we will only consider the case $d=1$.

\noindent
(i) Assume that $D^+(\Lambda) = \infty$. This means that there is a real-valued sequence $1 \leq r_1 < r_2 < \cdots \rightarrow \infty$ such that
\[
\frac{\sup_{x \in \R} |\Lambda \cap [x,x{+}r_n] |}{r_n} > n
\quad \text{for all} \;\; n \in \N .
\]
Then for each $n \in \N$ there exists some $x_n \in \R$ satisfying
\[
\frac{|\Lambda \cap [x_n,x_n{+}r_n] |}{r_n} \geq n .
\]
For each $k \in \N$, we partition the interval $[x_n,x_n{+}r_n]$ into $k$ subintervals of equal length $\frac{r_n}{k}$, namely the intervals $\big[x_n,x_n{+}\frac{r_n}{k}\big], \, \ldots, \, \big[x_n{+}\frac{(k-1)r_n}{k},x_n{+}r_n\big]$.
Then at least one of the subintervals, which we denote by $I_{n,k}$, must satisfy
\begin{equation}
\label{eqn:Lambda-intersect-small-interval}
\frac{|\Lambda \cap I_{n,k} |}{| I_{n,k} |} \geq n  ,
\end{equation}
where $| I_{n,k} | = \frac{r_n}{k}$.
Letting $k \rightarrow \infty$, we see that
\[
\limsup_{r \rightarrow 0} \frac{\sup_{x \in \R} |\Lambda \cap [x,x{+}r] |}{r} = \infty .
\]
Define the function $g : \R \rightarrow \C$ by $g(x) = \frac{1}{\sqrt{|S|}} \chi_{S} (x)$ for $x \in \R$.
Then $\| g \|_{L^2(\R)} = \| g \|_{L^2(S)} = 1$ and $\widehat{g} (0) = \int_S g(x) \, dx = \sqrt{|S|}$.
Since $g \in L^1(\R)$, its Fourier transform $\widehat{g}$ is continuous on $\R$ and therefore exists $0< \delta < \frac{1}{2}$
such that $|\widehat{g}(\omega)| \geq \sqrt{|S|} / 2$ for all $\omega \in [-\frac{\delta}{2},\frac{\delta}{2}]$.
For each $n \in \N$,
we set $k_n := \lceil \frac{r_n}{\delta} \rceil \geq 2$, so that $k_n -1 < \frac{r_n}{\delta} \leq k_n$
and thus $\frac{\delta}{2} < \frac{r_n}{2(k_n -1)} \leq \frac{r_n}{k_n} \leq \delta$.
It then follows from \eqref{eqn:Lambda-intersect-small-interval} that
\[
|\Lambda \cap I_{n,k_n} |
\;\geq\; n \cdot |I_{n,k_n}|
\;\geq\; n \cdot \tfrac{r_n}{k_n}
\;>\; n \cdot \tfrac{\delta}{2} \; .
\]
For each $n \in \N$, we denote the center of the interval $I_{n,k_n}$ by $c_n \in \R$ and let $f_n \in L^2(S)$ be defined by $f_n(x) := e^{2\pi i c_n x} \, g (x)$ for $x \in S$. Then
\begin{equation}\label{eqn:Lambda-intersect-small-interval-lowerbound-final}
\begin{split}
\sum_{\lambda \in \Lambda}
\big| \langle f_n , e^{2\pi i \lambda (\cdot)} \rangle_{L^2(S)} \big|^2
&\;\geq\; \sum_{\lambda \in \Lambda \cap I_{n,k_n}}
\big| \langle g , e^{2\pi i (\lambda - c_n) (\cdot)} \rangle_{L^2(S)} \big|^2 \\
&\;=\; \sum_{\lambda \in \Lambda \cap I_{n,k_n}} \big| \widehat{g} (c_n - \lambda) \big|^2
\;>\; \big( n \cdot \tfrac{\delta}{2} \big) \cdot \tfrac{|S|}{4} ,
\end{split}
\end{equation}
where used that $c_n {-} \lambda \in [-\frac{\delta}{2},\frac{\delta}{2}]$ for all $\lambda \in \Lambda \cap I_{n,k_n}$, since $I_{n,k_n}$ is an interval of length $\frac{r_n}{k_n} \leq \delta$.
While $\| f_n \|_{L^2(S)} = \| g \|_{L^2(S)} = 1$ for all $n \in \N$, the right hand side of (\ref{eqn:Lambda-intersect-small-interval-lowerbound-final}) tends to infinity as $n \rightarrow \infty$. Hence, we conclude that $E(\Lambda)$ is not a Bessel sequence in $L^2(S)$ if $D^+(\Lambda) = \infty$. \\
(ii) Suppose to the contrary that $E(\Lambda)$ is a Riesz sequence in $L^2(S)$ with Riesz bounds $A$ and $B$, but the set $\Lambda \subset \R$ is not separated.
Then there are two sequences $\{ \lambda_n \}_{n=1}^\infty$ and $\{ \lambda_n' \}_{n=1}^\infty$ in $\Lambda$ such that $| \lambda_n -  \lambda_n' | \rightarrow 0$ as $n \rightarrow \infty$.
Note that $S \subset \R$ is a finite measure set, and for each $x \in S$ we have $| e^{2\pi i \lambda_n x} - e^{2\pi i \lambda_n' x} | \leq 2$ and $e^{2\pi i \lambda_n x} - e^{2\pi i \lambda_n' x} \rightarrow 0$ as $n \rightarrow \infty$.
Thus, we have $\lim_{n \rightarrow \infty} \int_S \big| e^{2\pi i \lambda_n x} - e^{2\pi i \lambda_n ' x} \big| ^2 \, dx = 0$ by the dominated convergence theorem.
For $\lambda \in \Lambda$, let $\delta_{\lambda} \in \ell_2 (\Lambda)$ be the Kronecker delta sequence supported at $\lambda$, that is,
$\delta_{\lambda} (\lambda') = 1$ if $\lambda' = \lambda$, and $0$ otherwise.
Then since $E(\Lambda)$ is a Riesz sequence in $L^2(S)$, we have
\[
\begin{split}
2
&\;=\; \| \delta_{\lambda_n} - \delta_{\lambda_n'} \|_{\ell_2(\Lambda)}^2 \\
&\;\leq\;
\tfrac{1}{A} \,
\big\| e^{2\pi i \lambda_n (\cdot)} - e^{2\pi i \lambda_n ' (\cdot)} \big\|_{L^2(S)}^2
=
\tfrac{1}{A} \, \int_S \big| e^{2\pi i \lambda_n x} - e^{2\pi i \lambda_n ' x} \big| ^2 \, dx
\;\; \rightarrow \;\; 0 ,
\end{split}
\]
yielding a contradiction.
\end{proof}

\begin{proof}[\textbf{Proof of Lemma \ref{lem:Lemma5-1-in-OU08}}]
Let $\Lambda = \{ \lambda_n \}_{n \in \Z}$ with $\lambda_n < \lambda_{n+1}$ for all $n$,
and fix any $\delta > 0$. Choose a sufficiently large number $N \in \N$ so that $\frac{1}{N} < \tau := \min \{ \Delta(\Lambda) , 2 \delta \}$, where $\Delta(\Lambda) := \inf\{ |\lambda - \lambda'| : \lambda \neq \lambda' \in \Lambda \}$ is the separation constant of $\Lambda$ (see Section~\ref{subsec:prelim-density}).
Consider the perturbation $\widetilde{\Lambda} \subset \frac{1}{N} \Z$ of $\Lambda$, obtained by rounding each element of $\Lambda$ to the nearest point in $\frac{1}{N} \Z$ (if $\lambda \in \Lambda$ is exactly the midpoint of $\frac{k}{N}$ and $\frac{k+1}{N}$, then we choose $\frac{k}{N}$).
Since $\Delta(\Lambda) > \frac{1}{N}$, all elements in $\Lambda$ are rounded to distinct points in $\frac{1}{N} \Z$, i.e., the set $\widetilde{\Lambda} = \{ \widetilde{\lambda}_n \}_{n \in \Z} \subset \frac{1}{N} \Z$ has no repeated elements.
Clearly, there is a 1:1 correspondence between $\lambda_n$ and $\widetilde{\lambda}_n$, and we have $| \lambda_n - \widetilde{\lambda}_n | \leq \frac{1}{2N} < \frac{\tau}{2} \leq \delta$ for all $n \in \Z$.

We claim that for any $M \in \N$ there exist constants $c \in \N$, $d \in \frac{1}{N} \Z$, and an increasing sequence $\widetilde{s}(-M) < \widetilde{s}(-M{+}1) < \ldots < \widetilde{s}(M)$ in $\widetilde{\Lambda} \subset \frac{1}{N} \Z$ satisfying
\[
\widetilde{s}(j) = c j + d
\quad \text{for} \;\; j = -M , \ldots, M .
\]
Once this claim is proved, it follows that the sequence $\{ s(j) \}_{j=-M}^M \subset \Lambda$ corresponding to $\{ \widetilde{s}(j) \}_{j=-M}^M \subset \widetilde{\Lambda}$, satisfies the condition \eqref{eqn:Lemma5-1-in-OU08-condition} as desired.

To prove the claim,
consider the partition of $N \widetilde{\Lambda} \; (\subset \Z)$ based on residue modulo $N$, that is, consider the sets $N\widetilde{\Lambda} \cap N\Z, N\widetilde{\Lambda} \cap (N\Z{+}1), \ldots, N\widetilde{\Lambda} \cap (N\Z{+}N{-}1)$.
Since $D^+ (\widetilde{\Lambda}) = D^+ (\Lambda) > 0$, at least one of these $N$ sets must have positive upper density, i.e., $D^+ (N\widetilde{\Lambda} \cap (N\Z{+}u) ) > 0$ for some $u \in \{ 0 , \ldots, N-1 \}$.
Then Szemer\'{e}di's theorem implies that for any $M \in \N$ the set $N\widetilde{\Lambda} \cap (N\Z{+}u)$ contains an arithmetic progression of length $2M{+}1$, that is, $\{ c_0 j + d_0 : j = -M , \ldots, M \} \subset N\widetilde{\Lambda} \cap (N\Z{+}u)$ for some $c_0 \in \N$ and $d_0 \in \Z$.
This means that there is an increasing sequence $\widetilde{s}(-M) < \widetilde{s}(-M{+}1) < \ldots < \widetilde{s}(M)$ in $\widetilde{\Lambda}$ satisfying
\[
N \, \widetilde{s}(j) = c_0 j + d_0
\quad \text{for} \;\; j = -M , \ldots, M .
\]
Since the numbers $c_0 j + d_0$, $j = -M , \ldots, M$ are in $N\Z{+}u$, it is clear that
$c_0 \in N \N$ and $d_0 \in N\Z{+}u$.
Thus, setting $c := \frac{1}{N} c_0 \in \N$ and $d := \frac{1}{N} d_0 \in \Z{+}\frac{u}{N} \subset \frac{1}{N} \Z$, we have $\widetilde{s}(j) = cj + d$ for $j = -M, \ldots, M$, as claimed.

Finally, one can easily force the constant $c \in \N$ to be a multiple of any prescribed number $L \in \N$.
This is achieved by considering the partition of $N \widetilde{\Lambda} \; (\subset \Z)$ based on residue modulo $LN$, instead of modulo $N$.
\end{proof}

\section*{Acknowledgments}

The author acknowledges support by the DFG Grants PF 450/6-1 and PF 450/9-1.




\begin{thebibliography}{00}



\bibitem{AAC15}
E.~Agora, J.~Antezana, C.~Cabrelli,
\emph{Multi-tiling sets, Riesz bases, and sampling near the critical density in LCA groups},
Adv. Math.
vol.~285, pp.~454-477, 2015.







\bibitem{BCMS19} M.~Bownik, P.~Casazza, A.W.~Marcus, D.~Speegle,
\emph{Improved bounds in Weaver and Feichtinger conjectures},
J. Reine Angew. Math. (Crelle) vol.~749, pp.~267--293, 2019 (published online: August 30, 2016).

\bibitem{BL96}
V.~Bergelson, A. Leibman,
\emph{Polynomial extensions of van der Waerden's and Szemer\'{e}di's theorems},
J. Amer. Math. Soc. vol.~9, no.~3, pp.~725--753, 1996.

\bibitem{CC18}
C.~Cabrelli and D.~Carbajal,
\emph{Riesz bases of exponentials on unbounded multi-tiles},
Proc. Amer. Math. Soc.
vol.~146, no.~5, pp.~1991--2004, 2018.

\bibitem{CHM21}
C.~Cabrelli, K.~Hare, U.~Molter,
\emph{Riesz bases of exponentials and the Bohr topology},
Proc. Amer. Math. Soc.
vol.~149, no.~5, pp.~2121--2131, 2021.

\bibitem{CM20}
R.~Cardeccia, S.~Muro,
\emph{Arithmetic progressions and chaos in linear dynamics},
preprint arXiv:2003.07161, 2020.

\bibitem{Ch16}
O.~Christensen,
\emph{An introduction to frames and Riesz bases}, Second edition, Birkh{\"a}user, 2016.



\bibitem{CDH99} O.~Christensen, B.~Deng, C.~Heil,
\emph{Density of Gabor frames},
Appl. Comput. Harmon. Anal. vol.~7, no.~3, pp.~292--304, 1999.

\bibitem{DL19}
A.~Debernardi, N.~Lev,
\emph{Riesz bases of exponentials for convex polytopes with symmetric faces},
To appear in J. Eur. Math. Soc.,
arXiv:1907.04561, 2019.

\bibitem{ES77}
P.~Erd\H{o}s, A.~S\'{a}rk\"{o}zy,
\emph{On differences and sums of integers, II},
Bull. Soc. Math. Gr\`{e}ce
(N.S.) vol.~18, pp.~204--223, 1977.

\bibitem{Fo99}
G.B.~Folland,
\emph{Real Analysis}, second edition,
John Wiley \& Sons, Inc., New York, 1999.

\bibitem[FLW16]{FLW16}
N.~Frantzikinakis, E.~Lesigne, M.~Wierdl,
\emph{Random differences in Szemer\'{e}di's theorem and related results},
J. Anal. Math. 130, pp.~91--133, 2016.

\bibitem{Fu74} B.~Fuglede,
\emph{Commuting self-adjoint partial differential operators and a group theoretic problem},
J. Func. Anal. vol.~16, pp.~101--121, (1974).

\bibitem{GO03}
B.R.~Gelbaum, J.M.H.~Olmsted,
\emph{Counterexamples in analysis},
Dover Publications Inc., New York, 2003.

\bibitem{GL14}
S.~Grepstad, N.~Lev.
\emph{Multi-tiling and Riesz bases},
Adv. Math.
vol.~252, pp.~1-6, 2014.





\bibitem{GOR15} K.~Gr{\"o}chenig, J.~Ortega-Cerd\`{a} and J.L.~Romero,
\emph{Deformation of Gabor systems},
Adv. Math.
vol.~277, pp.~388--425, 2015.

\bibitem{He07} C.~Heil,
\emph{History and evolution of the density theorem for {G}abor frames},
J. Four. Anal. Appl. vol.~12, pp.~113--166, 2007.

\bibitem{He11}
C.~Heil,
\emph{A basis theory primer},
Expanded edition, Birkh{\"a}user, 2011.

\bibitem{Hi96}
J.R.~Higgins,
\emph{Sampling theory in Fourier and signal analysis: Foundations},
Oxford Univ. Press, Oxford, 1996.

\bibitem{IKP99} A.~Iosevich, N.~Katz, S.~Pedersen,
\emph{Fourier bases and a distance problem of Erd\H{o}s},
Mathematical Research Letters 6, pp.~251--255, 1999.

\bibitem{Ko15}
M.N.~Kolountzakis,
\emph{Multiple lattice tiles and Riesz bases of exponentials},
Proc. Amer. Math. Soc.
vol.~143, no.~2, pp.~741--747, 2015.

\bibitem{KN15}
G.~Kozma, S.~Nitzan,
\emph{Combining Riesz bases},
Invent. Math. vol.~199, no.~1, pp.~267--285, 2015.

\bibitem{KN16}
G.~Kozma, S.~Nitzan,
\emph{Combining Riesz bases in $\mathbb R^d$},
Rev. Mat. Iberoamericana, vol.~32, no.~4, pp.~1393--1406, 2016.

\bibitem{La67}
H.J.~Landau,
\emph{Necessary density conditions for sampling and interpolation of certain entire functions},
Acta Math. vol.~117, pp.~37--52, 1967.

\bibitem{LM19} N.~Lev, M.~Matolcsi,
\emph{The Fuglede conjecture for convex domains is true in all dimensions},
To appear in Acta Mathematica, arXiv:1904.12262, 2019.

\bibitem{LR00}
Y.I.~Lyubarskii, A.~Rashkovskii,
\emph{Complete interpolation sequences for Fourier transforms supported by convex symmetric polygons},
Ark. Mat. vol.~38, no.~1, pp.~139--170, 2000.

\bibitem{MSS16}
A.W.~Marcus, D.A.~Spielman, N.~Srivastava,
\emph{Interlacing families II: Mixed characteristic polynomials and the Kadison-Singer problem},
Annals of Mathematics, vol.~182, pp.327--350, 2015

\bibitem{MM08}
B.~Matei, Y.~Meyer,
\emph{Quasicrystals are sets of stable sampling},
C. R. Acad. Sci. Paris, Ser. I vol.~346, pp.~1235--1238, 2008.

\bibitem{MM10}
B.~Matei, Y.~Meyer,
\emph{Simple quasicrystals are sets of stable sampling},
Complex Var. Elliptic Equ. vol.~55, pp.~947-964, 2010.


\bibitem{NO12}
S.~Nitzan, A.~Olevskii,
\emph{Revisiting Landau's density theorems for Paley-Wiener spaces},
C. R. Acad. Sci. Paris, Ser. I vol.~350, no.~9-10, pp.~509--512, 2012.


\bibitem{NOU16}
S.~Nitzan, A.~Olevskii, A.~Ulanovskii,
\emph{Exponential frames on unbounded sets},
Proc. Amer. Math. Soc. vol.~144, no.~1, pp.~109--118, 2016.

\bibitem{OU06}
A.~Olevskii, A.~Ulanovskii,
\emph{Universal sampling of band-limited signals},
C. R. Acad. Sci. Paris, Ser. I vol.~342, no.~12, pp.~927--931, 2006.

\bibitem{OU08}
A.~Olevskii, A.~Ulanovskii,
\emph{Universal sampling and interpolation of band-limited signals},
Geom. Funct. Anal. vol.~18, no.~3, pp.~1029--1052, 2008.




\bibitem{OU16-book}
A.~Olevskii, A.~Ulanovskii,
\emph{Functions with Disconnected Spectrum: Sampling, Interpolation, Translates},
American Mathematical Society, University Lecture Series, vol.~65, 2016.

\bibitem{PW34}
R.E.A.C.~Paley, N.~Wiener,
\emph{Fourier transforms in the complex domain},
American Mathematical Society Colloquium Publications vol.~19, American Mathematical Society, New York, 1934.



\bibitem{Sz75}
E.~Szemer\'{e}di,
\emph{On sets of integers containing no $k$ elements in arithmetic progression},
Acta Arith. vol.~27, pp.~199-245, 1975.

\bibitem{Wa63}
A.~Walfisz,
\emph{Weylsche Exponentialsummen in der neueren Zahlentheorie},
Berlin: VEB Deutscher Verlag der Wissenschaften, 1963.

\bibitem{Yo01}
R.M.~Young,
\emph{An introduction to nonharmonic Fourier series},
Academic Press, Inc., San Diego, CA, first edition, 2001.

\end{thebibliography}


\end{document}